\newcommand{\dE}{\mathbb{E}}
\newcommand{\dR}{\mathbb{R}}
\newcommand{\dP}{\mathbb{P}}
\newcommand{\cF}{\mathcal{F}}
\newcommand{\veps}{\varepsilon}
\def\beq{\begin{equation}}
\def\bdef{\begin{defn}}
\def\ndef{\end{defn}}
\def\bthm{\begin{thm}}
\def\nthm{\end{thm}}
\def\bprop{\begin{prop}}
\def\nprop{\end{prop}}
\def\brmk{\begin{remarks}}
\def\nrmk{\end{remarks}}
\def\bexa{\begin{example}}
\def\nexa{\end{example}}
\def\blem{\begin{lem}}
\def\nlem{\end{lem}}
\def\bcor{\begin{cor}}
\def\ncor{\end{cor}}
\def\bexe{\begin{exe}}
\def\nexe{\end{exe}}
\def\bprf{\begin{proof}}
\def\nprf{\end{proof}}
\def\bnum{\begin{enumerate}}
\def\nnum{\end{enumerate}}
\newtheorem{thm}{Theorem}[section]
\newtheorem{cor}[thm]{Corollary}
\newtheorem{lem}[thm]{Lemma}
\newtheorem{prop}[thm]{Proposition}
\newtheorem{example}[thm]{Example}
\newtheorem{remarks}[thm]{Remarks}
\newtheorem{rem}[thm]{Remark}
\newtheorem{defn}[thm]{Definition}
\def\FF{\mathcal F}
\def\GG{\mathcal G}
\def\MM{\mathcal M}
\newcommand{\ee}{\mathbb{E}}
\newcommand{\nn}{\mathbb{N}}
\newcommand{\rr}{\mathbb{R}}
\newcommand{\pp}{\mathbb{P}}
\newcommand{\ttt}{\mathbb{T}}
\newcommand{\g}{\mathbb{G}}
\newcommand{\ff}{\mathbb{F}}
\newcommand{\xx}{\mathbb{X}}
\newcommand{\yy}{\mathbb{Y}}
\newcommand{\uu}{\mathbb{U}}
\def\vep{\varepsilon}
\def\<{\langle}
\def\>{\rangle}
\newcommand{\superexpp} {\underset{b_n^2}{\overset{\rm superexp}{\Longrightarrow}} }
\newcommand{\superexp} {\underset{b_{|\mathbb{T}_{n-1}|}^2}{\overset{\rm superexp}{\Longrightarrow}} }
\newcommand{\superexpn} {\underset{b_{|\mathbb{T}_{n}|}^2}{\overset{\rm superexp}{\Longrightarrow}} }
\newcommand{\superexpnn} {\underset{b_{n}^2}{\overset{\rm superexp}{\Longrightarrow}} }
\def\build#1_#2^#3{\mathrel{\mathop{\kern 0pt#1}\limits_{#2}^{#3}}}
\numberwithin{equation}{section}
\begin{document}

\title[Deviation for  bifurcating autoregressive processes]{Deviation inequalities and moderate deviations for estimators of parameters in bifurcating autoregressive models }

\author{S.Val\`ere Bitseki Penda}
\email{Valere.Bitsekipenda@math.univ-bpclermont.fr}
\address{Laboratoire de Math\'ematiques, CNRS UMR 6620, Universit\'e Blaise Pascal, Avenue des Landais, 63177 Aubi\`ere, France.}

\author{Hac\`ene Djellout}
\email{Hacene.Djellout@math.univ-bpclermont.fr}
\address{Laboratoire de Math\'ematiques, CNRS UMR 6620, Universit\'e Blaise Pascal, Avenue des Landais, 63177 Aubi\`ere, France.}

\keywords{Deviation inequalities, Moderate deviation principle, Bifurcating autoregressive process,
  Martingale, Limit theorems, Least squares estimation.}

\date{\today}
\begin{abstract} The purpose of this paper is to investigate the deviation inequalities and the moderate
  deviation principle of the least squares estimators of the unknown
  parameters of general $p$th-order bifurcating autoregressive processes, under suitable assumptions on the driven noise of the process. Our investigation
  relies on the moderate deviation principle for martingales.
\end{abstract}

\maketitle

\vspace{-0.5cm}

\begin{center}
\textit{AMS 2000 subject classifications: 60F10, 62F12, 60G42, 62M10, 62G05.}
\end{center}

\medskip

\section{Motivation and context }

Bifurcating autoregressive processes (BAR, for short) are an
adaptation of autoregressive processes, when the data have a binary
tree structure. They were first introduced by Cowan and Staudte
\cite{CS86}   for cell lineage data where each individual in one
generation gives rise to two offspring in the next generation.
\medskip

In their paper, the original BAR process was defined as follows. The
initial cell is labelled $1$, and the two offspring of cell $k$ are
labelled $2k$ and $2k+1$. If $X_k$ denotes an observation of some
characteristic of individual $k$ then the first order BAR process is
given, for all $k\ge 1$, by
\begin{equation*}\begin{cases}
X_{2k}=a +bX_k+\vep_{2k} \\
X_{2k+1}=a+b X_k+\vep_{2k+1}.
\end{cases}
\end{equation*}
The noise sequence $(\varepsilon_{2k},\varepsilon_{2k+1})$
represents environmental effects, while $a,b$ are unknown real
parameters, with $|b|<1$, related to inherited effects. The driven
noise $(\varepsilon_{2k},\varepsilon_{2k+1})$ was originally
supposed to be independent and identically distributed with normal
distribution.  But since two sister cells are in the same
environment at their birth, $\varepsilon_{2k}$ and
$\varepsilon_{2k+1}$ are allowed to be correlated, inducing a
correlation  between sister cells, distinct from the correlation
inherited from their mother.
\medskip

Several extensions of the model have been proposed and various
estimators are studied in the literature for the unknown parameters,
see for instance \cite{BZ4},\cite{BH99}, \cite{BH00}, \cite{BHY09},
\cite{BZ105}, \cite{BZ205}. See  \cite{BSG09} for a relevant
references.
\medskip

Recently, there are many studies of the asymmetric BAR process, that
is when the quantitative characteristics of the even and odd sisters
are allowed to depend from their mother's through different sets of
parameters.

\medskip
Guyon \cite{G07} proposes an interpretation of the asymmetric BAR
process as a bifurcating Markov chain, which allows him to derive
laws of large numbers and central limit theorems for the least
squares estimators of the unknown parameters of the process.  This
Markov chain approach was further developed by  Delmas and Marsalle
\cite{DM10}, where the cells are allowed to die. They defined the
genealogy of the cells through a Galton-Watson process, studying the
same model on the Galton Watson tree instead of a binary tree.

\medskip
Another approach based on martingales theory was proposed by Bercu,
de Saporta and G\'egout-Petit \cite{BSG09}, to sharpen the asymptotic analysis of Guyon
under weaker assumptions. It must be pointed out that missing data are not
dealt with in this work. To take into account possibly missing data in the estimation procedure de Saporta et al.
\cite{SGM11} use a two-type Galton-Watson process to
model the genealogy.
\medskip

Our objective  in this paper  is to go a step further by
\begin{itemize}
\item studying  the moderate deviation principle (MDP, for short) of the least squares estimators of the unknown
parameters of general $p$th-order bifurcating autoregressive
processes.  More precisely we are interested in the asymptotic
estimations of
$$\pp\left(\frac{\sqrt{n}}{b_n}\big(\Theta_n-\Theta\big)\in A\right)$$
where $\Theta_n$ denotes the  estimator  of the unknown parameter of
interest $\Theta$, $A$ is a given domain of deviation, $(b_n>0)$ is
some sequence denoting the scale of deviation. When $b_n=1$ this
exactly the estimation of the central limit theorem. When $b_n=\sqrt
n$, it becomes the {\it large deviation}. And when $1\ll b_n\ll\sqrt
n$, this is the so called {\it moderate deviations}. Usually, MDP
has a simpler rate function inherited from the approximated Gaussian
process, and holds for a larger class of dependent random variables
than the LDP.
\medskip

Though we have not found studies exactly on this question in the
literatures, except the recent work of Biteski et al. \cite{BDG11}
but technically we are much inspired from two lines of studies

\begin{enumerate}
\item the work of Bercu et al. \cite{BSG09} on the almost sure convergence of the estimators with
the quadratic strong law and the central limit theorem;
\item the works of Dembo \cite{Dem96}, and Worms \cite{Wor01a}, \cite{Wor01b},
  \cite{Wor01c} on the one hand, and of the paper of Puhalskii \cite{Puha97}
  and Djellout \cite{Dj02} on the other hand, about the MDP for martingales.

\end{enumerate}
\item giving deviation inequalities for the estimator of bifurcating autoregressive processes,
which are important for a rigorous non asymptotic statistical study, i.e. for all
$x>0$
$$\pp\left(||\Theta_n-\Theta||\ge x\right)\le e^{-C_n(x)},$$ where
$C_n(x)$ will crucially depends on our set of assumptions. The upper
bounds in this inequality hold for arbitrary $n$ and $x$ (not a
limit relation, unlike the MDP results), hence they are much more
practical (in statistics). Deviation inequalities for estimators of
the parameters  associated  with linear regression,  autoregressive
and branching processes are investigated by Bercu and Touati
\cite{BeTo08}. In the martingale case, deviation inequalities for
self normalized martingale have been developed by de la
Pe$\overset{\sim}{\rm n}$a et al. \cite{PeTzQu09}.  We also refer to
the work of Ledoux \cite{Le01} for precise credit and references.
This type of inequalities are equally well motivated by theoretical
question as by numerous applications in different field including
the analysis of algorithms, mathematical physics and empirical
processes. For some applications  in non asymptotic model selection
problem we refer to Massart \cite{Ma07}.
\end{itemize}
\medskip

This paper is organized as follows. First of all, in Section 2, we introduce
the BAR($p$) model as well as the least square estimators for the parameters
of observed BAR($p$) process and some related notation and hypothesis. In
Section 3, we state our main results on the deviation inequalities and MDP of our estimators.
The section 4 dedicated to the superexponential convergence of the quadratic variation of the
martingale, this section contains exponential inequalities which are crucial for the proof of the
deviation inequalities. The proofs of the main results are postponed in section 5.


\section{Notations and Hypothesis}
In all the sequel, let $p\in \nn^*$. We consider the asymmetric
BAR($p$) process given, for all $n\ge 2^{p-1}$, by
\begin{equation}\label{bar_p1}\begin{cases}X_{2n}=a_0+\sum_{k=1}^pa_kX_{[\frac{n}{2^{k-1}}]}+\vep_{2n} \\
X_{2n+1}=b_0+\sum_{k=1}^pb_kX_{[\frac{n}{2^{k-1}}]}+\vep_{2n+1},
\end{cases}
\end{equation}
where the notation $[x]$ stands for the largest integer less than or equal to the real
$x$.
The initial states $\{X_k,1\le k\le 2^{p-1}-1\}$ are the
ancestors while $(\vep_{2n},\vep_{2n+1})$ is the driven noise of the
process. The parameters $(a_0,a_1,\cdots,a_p)$ and
$(b_0,b_1,\cdots,b_p)$ are unknown real numbers.

The BAR($p$) process can be rewritten in the abbreviated vector form given, for all $n\ge
2^{p-1}$, by \begin{equation}\label{bar_p2}\begin{cases}\xx_{2n}=A\xx_n +\eta_{2n}\\
 \xx_{2n+1}=B\xx_n +\eta_{2n+1}
\end{cases}
\end{equation}
where the regression vector
$\xx_n=\left(X_n,X_{[\frac{n}{2}]},\cdots,X_{[\frac{n}{2^{p-1}}]}\right)^t$,
$\eta_{2n}=(a_0+\vep_{2n})e_1$, $\eta_{2n+1}=(b_0+\vep_{2n+1})e_1$,
with $e_1=(1,0,\cdots,0)^t\in\rr^p$. Moreover, $A$ and $B$ are the
$p\times p$ companion matrices
$$A=\begin{pmatrix}
a_1& a_2 & \cdots & a_p \\
1 & 0 & \cdots & 0 \\
0 & . & . & .\\
0& . & 1& .\\
\end{pmatrix}
\qquad {\rm and}\quad B=\begin{pmatrix}
b_1& b_2 & \cdots & b_p \\
1 & 0 & \cdots & 0 \\
0 & . & . & .\\
0& . & 1& .\\
\end{pmatrix}.
$$
In the sequel, we shall assume that the matrices $A$ and $B$ satisfy the
contraction property
\begin{equation}\label{beta}\beta=\max(|| A||, ||B ||)<1,\end{equation}
where  for any matrix $M$ the notation $M^t$, $\Vert M \Vert$ and ${\rm Tr}(M)$ stand for the
transpose, the euclidean norm and the trace of $M$, respectively.

On can see this BAR($p$) process as a $p$th-order autoregressive
process on a binary tree, where each vertex represents an individual
or cell, vertex $1$  being the original ancestor. For all $n\ge 1$,
denote the $n$-th generation by $\displaystyle \g_n=\{2^n,2^n+1,\cdots,2^{n+1}-1\}.$
\begin{figure}[bht]
\includegraphics[width=4.83in,height=4.38in]{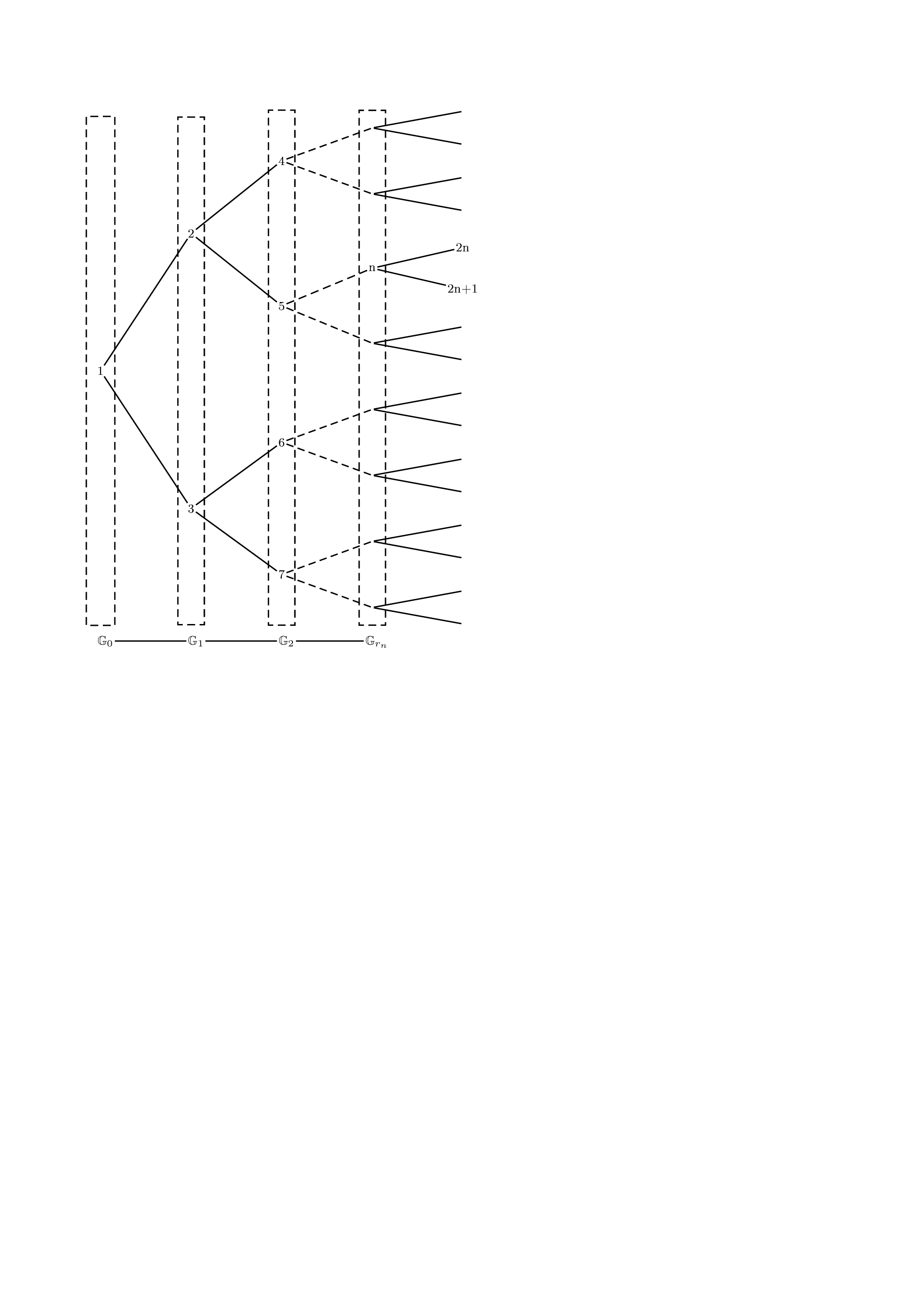}
\caption{The binary tree $\mathbb{T}$}\label{Fi:arbrebinaire}
\end{figure}

In particular, $\g_0=\{1\}$ is the initial generation and
$\g_1=\{2,3\}$ is the first generation of offspring from the first
ancestor. Let $\g_{r_n}$ be the generation of individual $n$, which
means that $r_n=[\log_2(n)]$. Recall that the two offspring of
individual $n$ are labelled $2n$ and $2n+1$, or conversely, the
mother of the individual $n$ is $[n/2]$. More generally, the
ancestors of individual $n$ are $[n/2],[n/2^2],Ê\cdots,[n/2^{r_n}]$.
Furthermore, denote by
$$\ttt_n=\bigcup_{k=0}^n\g_k$$
the subtree of all individuals from the original individual up to
the $n$-th generation. We denote by $\ttt_{n,p}=\{k\in \ttt_n,k\ge
2^p\}$ the subtree of all individuals up to the $n$th generation
without $\ttt_{p-1}$. One can observe that , for all $n\ge 1$,
$\ttt_{n,0}=\ttt_n$ and for all $p\ge 1$, $\ttt_{p,p}=\g_p$.

The BAR($p$) process can be rewritten, for all $n\ge 2^{p-1}$, in the
matrix form
$$Z_n=\theta^tY_n+V_n$$
where
$$Z_n=\begin{pmatrix}
X_{2n}\\
X_{2n+1}
\end{pmatrix},\quad Y_n=\begin{pmatrix}
1\\
\xx_n
\end{pmatrix},\quad V_n=\begin{pmatrix}
\vep_{2n}\\
\vep_{2n+1}
\end{pmatrix},$$
and the $(p+1)\times 2$ matrix parameter $\theta$ is given by
$$\theta=\begin{pmatrix}
a_0& b_0\\
a_1 & b_1\\
.&.\\
.&.\\
a_p &b_p
\end{pmatrix}.$$

As in Bercu et al.\cite{BSG09}, we introduce the least square estimator $\hat
\theta_n$ of $\theta$, from the observation of all
individuals up to the $n$-the generation that is the complete
sub-tree $\ttt_n$,  for all $n\ge p$
\begin{equation}\label{hattheta}\hat\theta_n=S_{n-1}^{-1}\sum_{k\in \ttt_{n-1,p-1}}Y_kZ_k^t,\end{equation}

where the $(p+1)\times (p+1)$ matrix is defined as
\begin{equation}\label{defS_n}S_n=\sum_{k\in \ttt_{n,p-1}}Y_kY_k^t=\sum_{k\in \ttt_{n,p-1}}\begin{pmatrix}
1& \xx_k^t\\
\xx_k & \xx_k\xx_k^t\\
\end{pmatrix}.
\end{equation}
We assume, without loss of generality, that for all $n\ge p-1$, $S_n$
is invertible. In all what follows, we shall make a slight abuse of notation by
identifying $\theta$ as well as $\hat\theta_n$ to
$${\rm vec}(\theta)=\begin{pmatrix}
a_0\\
.\\
.\\
a_p\\
b_0\\
.\\
.\\
b_p
\end{pmatrix} \quad{\rm and}\quad {\rm vec}(\hat\theta_n)=\begin{pmatrix}
\hat a_{0,n}\\
.\\
.\\
\hat a_{p,n}\\
\hat b_{0,n}\\
.\\
.\\
\hat b_{p,n}\end{pmatrix} .$$

Let $\Sigma_n=I_2\otimes S_n$, where $\otimes$ stands for the matrix Kronecker
product. Therefore, we deduce from (\ref{hattheta}) that
\begin{equation}\label{hatheta}\aligned\hat\theta_n&=\Sigma_{n-1}^{-1}\sum_{k\in \ttt_{n-1,p-1}}{\rm vec}(Y_kZ_k^t)=\Sigma_{n-1}^{-1}\sum_{k\in \ttt_{n-1,p-1}}\begin{pmatrix}
X_{2k}\\
X_k\xx_{2k}\\
X_{2k+1}\\
X_k\xx_{2k+1}
\end{pmatrix}.
\endaligned
\end{equation}
Consequently, (\ref{bar_p2}) yields to
\begin{equation}\label{thest}\hat\theta_n-\theta=\Sigma^{-1}_{n-1}\sum_{k\in \ttt_{n-1,p-1}}\begin{pmatrix}
\vep_{2k}\\
\vep_{2k}\xx_k\\
\vep_{2k+1}\\
\vep_{2k+1}\xx_k
\end{pmatrix}.
\end{equation}
\medskip

Denote by $\ff=(\FF_n)$ the natural filtration associated with the
BAR($p$) process, which means that $\FF_n$ is the $\sigma-$algebra
generated by the individuals up to $n$-$th$ generation, in other words $\FF_n=\sigma\{X_k,k\in \ttt_n\}.$

For the initial states, if we denote by
$\overline{X}_{1}=\max\Big\{\|\xx_{k}\|,k\leq 2^{p-1}\Big\},$ we
introduce the following hypothesis
\begin{enumerate}
\item[{\bf (Xa)}] For some $a>2$, there exists $\tau>0$ such that
\[\ee\left[\exp\left(\tau\overline{X}_{1}^{a}\right)\right]<\infty.
\]
\end{enumerate}
This assumption implies the weaker Gaussian integrability condition
\begin{itemize}
\item[{\bf (X2)}] There is $\tau>0$ such that
\[
\ee\left[\exp\left(\tau\overline{X}_{1}^{2}\right)\right]<\infty.
\]
\end{itemize}

For the noise $(\varepsilon_{2n},\varepsilon_{2n+1})$ the assumption may be of two types.\\
\begin{enumerate}
\item In the first case we will assume the independence of the noise which allows us to impose less restrictive conditions on the exponential integrability of the noise.\\

{\bf Case 1:}  We shall assume that $((\varepsilon_{2n},
\varepsilon_{2n+1} ),n\ge 1)$ forms a sequence of independent and
identically distributed bi-variate centered random variables with
covariance matrix $\Gamma$ associated with
$(\vep_{2n},\vep_{2n+1})$, given by
\begin{equation}\label{Gamma}\Gamma=\begin{pmatrix}
\sigma^2 & \rho\\
\rho & \sigma^2
\end{pmatrix}, \quad \text{where $\sigma^{2}>0$ and $|\rho|<\sigma^{2}.$}
\end{equation}
For all $n\ge p-1$ and for all $k\in \g_{n}$, we denote
\[\ee[\vep_k^2]=\sigma^2,\,\,\, \ee[\vep_k^4]=\tau^4,\,\,\, \ee[\vep_{2k}\vep_{2k+1}]=\rho,\,\,\,\ee[\vep_{2k}^2\vep^2_{2k+1}]=\nu^2\,\,\,{\rm where}\,\, \tau^{4} >0, \nu^{2}<\tau^{4}.\]

In addition, we assume that the condition {\bf (X2)} on the initial state is satisfied  and
\begin{enumerate}
\item[{\bf (G2)}] one can find $\gamma>0$ and $c>0$ such
that for all $n\geq p-1,$ for all $k\in \g_{n}$ and for all
$|t|\le c$
\[
\ee\left[\exp t\left(\vep_{k}^{2} - \sigma^{2}\right)\right]\leq
\exp\left(\frac{\gamma t^{2}}{2}\right).
\]
\end{enumerate}
In this case, we impose the following hypothesis on the scale of the
deviation

\begin{enumerate}
\item[{\bf (V1)}] $(b_{n})$ will denote an
increasing sequence of positive real numbers such that
$$b_{n}\longrightarrow +\infty$$
and for $\beta$ given by (\ref{beta})
\begin{itemize}
\item if $\beta\leq\frac{1}{2}$, the sequence $(b_{n})$ is such that $\displaystyle \frac{b_{n}\log n}{\sqrt{n}}
\longrightarrow 0$,

\item  if $\beta>\frac{1}{2}$, the sequence $(b_{n})$ is such that $\displaystyle
(b_{n}\sqrt{\log n})\beta^{\frac{r_{n}+1}{2}} \longrightarrow 0.$
\end{itemize}
\end{enumerate}

\vskip 15pt

\item In contrast with the first case,  in the second case, we will not assume that the sequence
$((\varepsilon_{2n}, \varepsilon_{2n+1} ),n\ge 1)$ is  i.i.d. The price to pay for giving up this i.i.d.
assumption is higher exponential moments. Indeed we need them to make use of the MDP for martingale,
especially to prove the Lindeberg condition via Lyapunov one's.

\vskip 15pt

{\bf Case 2:}  We shall assume that for all $n\ge p-1$ and for all
$j\in \g_{n+1}$ that  $\ee[\vep_{j}/\FF_{n}]=0$ and for all different
$k,l\in \g_{n+1}$ with $[\frac{k}{2}]\neq [\frac{l}{2}]$, $\vep_{k}$
and $\vep_{l}$ are conditionally independent given $\FF_{n}.$ And we will use the same  notations
as in the case 1:  for all $n\ge p-1$ and for all $k\in \g_{n+1}$
\[
\ee[\vep_k^2/\FF_n]=\sigma^2,\,\,\, \ee[\vep_k^4/\FF_n]=\tau^4,\,\,\, \ee[\vep_{2k}\vep_{2k+1}/\FF_n]=\rho,\,\,\,\ee[\vep_{2k}^2\vep^2_{2k+1}/\FF_n]=\nu^2\,\,\, a.s.
\]
where $\tau^{4} >0$, $\nu^{2}<\tau^{4}$ and we use also  $\Gamma$
for the conditional covariance matrix associated with
$(\vep_{2n},\vep_{2n+1})$. In this case, we assume that the condition {\bf (Xa)} on the initial state is satisfied,
and we shall make use of the following  hypotheses:

\begin{itemize}
\item [{\bf (Ea)}] for some $a>2$, there exist $t>0$ and $E>0$ such that for all $n\geq p-1$ and for all $k\in \g_{n+1},$
\[
\ee\left[\exp\left(t|\vep_{k}|^{2a}\right)/\FF_n\right]\leq E<\infty\,\,\quad a.s.
\]
\end{itemize}


Throughout this case, we introduce the following hypothesis on the scale of the deviation

\begin{enumerate}
\item[{\bf (V2)}] $(b_{n})$ will denote an increasing sequence of positive real numbers such that
$$b_{n}\longrightarrow +\infty,$$
and for $\beta$ given by (\ref{beta})
\begin{itemize}
\item if $\beta^{2}<\frac{1}{2}$, the sequence $(b_{n})$ is such that $\displaystyle \frac{b_{n}\log n}{\sqrt{n}}
\longrightarrow 0$,

\item if $\beta^{2}=\frac{1}{2}$, the sequence $(b_{n})$ is such that $\displaystyle \frac{b_{n}(\log
n)^{3/2}}{\sqrt{n}}\longrightarrow 0$,

\item  if $\beta^{2}>\frac{1}{2}$, the sequence $(b_{n})$ is such that $\displaystyle
(b_{n}\log n)\beta^{r_{n}+1} \longrightarrow 0.$
\end{itemize}
\end{enumerate}
\end{enumerate}

\brmk
The condition on the scale of the deviation in the case 2, is
less restrictive than in the case 1, since we assume more
integrability conditions. This condition on the scale of the
deviation naturally appear from the calculations (see the proof of
Proposition \ref{convergence_crochet}). Specifically, the $\log$
term comes from the crossing of the probability of a sum to the sum
of probability.
\nrmk

\brmk From \cite{DjGuWu06} or \cite{Le01}, we deduce with {\bf (Ea)}  that
\begin{itemize}
\item[{\bf (N1)}]  there is $\phi>0$ such that for all $n\ge p-1$,
for all $k\in \g_{n+1}$ and for all $t\in \rr$,
\[
\ee\Big[\exp\left(t \vep_k\right)/{\FF_n}\Big]< \exp\left(\frac{\phi
t^2}{2}\right),\qquad a.s.
\]
\end{itemize}
We have the same conclusion in the case 1, without the conditioning
; i.e.
\begin{itemize}
\item[{\bf (G1)}]  there is $\phi>0$ such that for all $n\ge p-1$,
for all $k\in \g_{n}$ and for all $t\in \rr$,
\[\ee\Big[\exp(t \vep_k)\Big]< \exp\left(\frac{\phi
t^2}{2}\right).\]
\end{itemize}
\nrmk

\brmk Armed by the recent development in the theory of
transportation inequalities, exponential integrability and
functional inequalities (see Ledoux \cite{Le01}, Gozlan \cite{Go06}
and Gozlan and Leonard \cite{GoLe07}), we can prove that a
sufficient condition for hypothesis {\bf (G2)} to hold is existence
of $t_{0}>0$ such that for all $n\geq p-1$ and for all $k\in
\g_{n}$, $\dE\left[\exp(t_{0}\vep_{k}^{2})\right]<\infty.$ \nrmk

\bigskip
We now turn to the estimation of the parameters $\sigma^2$ and
$\rho$. On the one hand, we propose to estimate the conditional
variance $\sigma^2$ by
\begin{equation}\label{hatsigma_n}
\hat\sigma_n^2=\frac{1}{2|\ttt_{n-1}|}\sum_{k\in \ttt_{n-1,p-1}}||\hat V_k||^2=\frac{1}{2|\ttt_{n-1}|}
\sum_{k\in \ttt_{n-1,p-1}}(\hat\vep_{2k}^2+\hat\vep_{2k+1}^2)
\end{equation}
where for all $n\ge p-1$ and all $k\in \g_n$ , $\hat
V_k^t=(\hat\vep_{2k},\hat\vep_{2k+1})^t$ with
\[
\begin{cases}\hat\vep_{2k}=X_{2k}-\hat a_{0,n}-\sum_{i=1}^p\hat a_{i,n}X_{[\frac{k}{2^{i-1}}]}\\
 \hat\vep_{2k+1}=X_{2k+1}-\hat b_{0,n}-\sum_{i=1}^p\hat b_{i,n}X_{[\frac{k}{2^{i-1}}]}\
\end{cases}
\]
We also introduce the following
\begin{equation}\label{sigma_n}
\sigma^2_n=\frac{1}{2|\ttt_{n-1}|}\sum_{k\in\ttt_{n-1,p}}(\vep_{2k}^2+\vep_{2k+1}^2).
\end{equation}

On the other hand, we estimate the conditional covariance $\rho$ by
\begin{equation}\label{hatrho_n}
\hat\rho_n=\frac{1}{|\ttt_{n-1}|}\sum_{k\in
\ttt_{n-1,p-1}}\hat\vep_{2k}\hat\vep_{2k+1}
\end{equation}

We also introduce the following
\begin{equation}
\rho_n=\frac{1}{|\ttt_{n-1}|}\sum_{k\in\ttt_{n-1,p}}\vep_{2k}\vep_{2k+1}.
\end{equation}

In order to establish the MDP results of our estimators, we shall make use of
a martingale approach.
For all $n\ge p,$ denote
\begin{equation}\label{defM_n}
M_n=\sum_{k\in
\ttt_{n-1,p-1}}\begin{pmatrix}
\vep_{2k}\\
\vep_{2k}\xx_k\\
\vep_{2k+1}\\
\vep_{2k+1}\xx_k
\end{pmatrix} \in \rr^{2(p+1)}.
\end{equation}

We can clearly rewrite (\ref{thest}) as
\begin{equation}\label{thetaM_n}\hat\theta_n-\theta=\Sigma_{n-1}^{-1}M_n.\end{equation}

We know from Bercu et al. \cite{BSG09} that $(M_n)$ is  a square
integrable martingale adapted to the filtration $\ff=(\FF_n)$. Its
increasing process is given for all $n\ge p$ by
\[
\langle M\rangle_n=\Gamma\otimes S_{n-1}
\]
where $S_n$ is given in (\ref{defS_n}) and $\Gamma$ is given in (\ref{Gamma}).
\bigskip

We recall that for a sequence of random variables $(Z_{n})_{n}$ on
$\dR^{d \times p}$, we say that $(Z_{n})_{n}$ converges
$(b_{n}^2)-$superexponentially fast in probability to some random
variable $Z$ if, for all $\delta > 0$,
\begin{equation*}
\limsup_{n \rightarrow \infty} \frac{1}{b_{n}^2} \log \dP\Big( \left\Vert Z_{n} - Z \right\Vert > \delta \Big) = -\infty.
\end{equation*}
This \textnormal{exponential convergence} with speed $b_{n}^2$ will be shortened as
\begin{equation*}
Z_{n} \superexpp Z.
\end{equation*}
We follow Dembo and Zeitouni \cite{DemZei98} for the language of the
large deviations, throughout this paper. Before going further, let
us recall the definition of a MDP: let $(b_n)$ an increasing
sequence of positive real numbers such that
\begin{equation}\label{scaleMDP}b_n\longrightarrow\infty\qquad {\rm and}\qquad \frac{b_n}{\sqrt n}\longrightarrow 0.\end{equation}
We say that a sequence of centered random variables $(M_{n})_{n}$ with
topological state space $(S,{\mathcal S})$ satisfies a MDP
with speed $b_n^2$ and rate function $I:
S\rightarrow \mathbb{R}_{+}^{*}$ if for each $A\in {\mathcal S}$,

$$-\inf\limits_{x\in A^{o}}I(x)\leq
\liminf\limits_{n\rightarrow\infty}\frac{1}{b^2_{n}}\log\mathbb{P}\left(\frac{\sqrt{n}}{b_n}M_n\in
A\right)\leq
\limsup\limits_{n\rightarrow\infty}\frac{1}{b^2_{n}}\log\mathbb{P}\left(\frac{\sqrt{n}}{b_n}M_n\in
A\right)\leq -\inf\limits_{x\in \overline{A}}I(x),$$
here $A^{o}$ and  $\overline{A}$ denote the interior and closure of $A$ respectively.\\

\bigskip
Before the presentation of the main results, let us fix some more
notation. Let $\displaystyle \overline{a}=\frac{a_0+b_0}{2},\quad
\overline{a^2}=\frac{a^2_0+b^2_0}{2},\quad
\overline{A}=\frac{A+B}{2}$ and $e_1=(1,0,\cdots,0)^t\in \rr^p.$ We
denote

\begin{equation}\label{defL_12}
\Xi=\overline{a}(I_p-\overline{A})^{-1}e_1,
\end{equation}
and $\Lambda$   the unique solution of the equation
\begin{equation}\label{defL_22}
\Lambda=T+\frac 12(A\Lambda A^t+B\Lambda B^t)
\end{equation}
where
\begin{equation}\label{defT}
T=\left(\sigma^2+\overline{a^2}\right)e_1e_1^t+\frac
12\left(a_0\left(A\Xi e_1^t+e_1\Xi^tA^t\right)+ b_0\left(B\Xi
e_1^t+e_1\Xi^tB^t\right)\right),
\end{equation}
We also introduce the following matrix $L$ and $\Sigma$  given by
\begin{equation}\label{defL}L=\begin{pmatrix}
1  & \Xi\\
\Xi & \Lambda
\end{pmatrix}\quad \text{and} \quad
\Sigma = I_{2}\otimes L.
\end{equation}

\brmk In the special case $p=1$, we have $\displaystyle
\Xi=\frac{\overline{a}}{1-\overline{ b}}$, and
$\displaystyle\Lambda=\frac{\overline{a^2}+\sigma^2+2\Xi\overline{ab}}{1-\overline{b^2}}$,
where  $\displaystyle \overline{a b}=\frac{a_0a_1+b_0b_1}{2},
\overline{b}=\frac{a_1+b_1}{2},
\overline{b^2}=\frac{a^2_1+b^2_1}{2}.$
 \nrmk

\bigskip

\section{Main results}
Let us present now the main results of this paper.  In the following
theorem, we will give the deviation inequalities of the estimator of
the parameters, 1 useful for non asymptotic statistical
studies.
\bthm\label{thm:deviation_ineq_theta}$\,$
\begin{enumerate}
\item [\rm{\bf (i)}] In the case 1, we have for all $\delta>0$ and
for all $b>0$ such that $b < \|\Sigma\|/(1+ \delta)$
\begin{equation}\label{dev_ineq_theta1}
\pp\left(\|\hat{\theta}_{n} - \theta\| > \delta\right) \leq
\begin{cases}
c_{1}\exp\left(-\frac{c_{2}(\delta b)^{2}}{c_{3} + (\delta b)}
\frac{2^{n}}{(n-1)^{2}}\right) \hspace{1.75cm} \text{if $\beta <
\frac{1}{2}$} \\ \\ c_{1}(n-1) \exp\left(\frac{-c_{2} (\delta
b)^{2}}{c_{3} + (\delta b)} \frac{2^{n}}{(n-1)^{2}}\right)
\hspace{0.5cm} \text{if $\beta = \frac{1}{2}$} \\ \\ c_{1}(n-1)
\exp\left(\frac{-c_{2} (\delta b)^{2}}{c_{3} + (\delta
b)}\frac{1}{(n-1)\beta^{n}}\right) \hspace{0.35cm} \text{if $\beta >
\frac{1}{2}$},
\end{cases}
\end{equation}
where the constants $c_{1}$, $c_{2}$ and $c_{3}$ depend on
$\sigma^2,$ $\beta,$ $\gamma$ and $\phi$ and are such that $c_{1},
c_{2}>0$, $c_{3}\geq 0.$

\item [\rm{\bf (ii)}] In the case 2, we have for all $\delta>0$ and
for all $b>0$ such that $b < \|\Sigma\|/(1+ \delta)$
\begin{equation}\label{dev_ineq_theta2}
\pp\left(\|\hat{\theta}_{n} - \theta\| > \delta\right) \leq
\begin{cases}
c_{1}\exp\left(-\frac{c_{2}(\delta b)^{2}}{c_{3} + c_{4}(\delta
b)}\frac{2^{n}}{(n-1)^{2}}\right) \hspace{0.6cm} \text{if $\beta <
\frac{\sqrt{2}}{2}$} \\ \\ c_{1} \exp\left(-\frac{c_{2}(\delta
b)^{2}}{c_{3} + c_{4}(\delta b)}\frac{2^{n}}{(n-1)^{3}}\right)
\hspace{0.5cm} \text{if $\beta = \frac{\sqrt{2}}{2}$} \\ \\
c_{1} \exp\left(-\frac{c_{2}(\delta b)^{2}}{c_{3} + c_{4}(\delta
b)}\frac{1}{(n-1)^{2}\beta^{2n}}\right) \hspace{0.35cm} \text{if
$\beta > \frac{\sqrt{2}}{2}$},
\end{cases}
\end{equation}
where the constants $c_{1}$, $c_{2}$, $c_{3}$, and  $c_{4}$ depend
on $\sigma^2,$ $\beta,$ $\gamma$ and $\phi$ and are such that
$c_{1}, c_{2} >0$, $c_{3} , c_{4} \geq 0$, $(c_{3},c_{4})\neq
(0,0).$
\end{enumerate}
\nthm

\brmk
One can notice that the estimate (\ref{dev_ineq_theta2}) is
stronger than estimate (\ref{dev_ineq_theta1}). This is due to the
fact that the integrability condition in case 2 is stronger than
integrability condition in case 1.
\nrmk

\brmk
The upper bounds in previous theorem holds for arbitrary
$n\geq p-1$ (not a limit relation, unlike the below results), hence
they are much more practical (in non asymptotic statistics).
\nrmk

In the next result, we will present the MDP of  the estimator
$\hat\theta_n$.

\bthm\label{thm:mdp_theta_n} In the case 1 or in the case 2, the sequence $\displaystyle\left(\sqrt{|\ttt_{n-1}|} (\hat\theta_n-\theta)/b_{|\ttt_{n-1}|}\right)_{n\ge 1}$
satisfies the MDP on $\rr^{2(p+1)}$ with  speed
$b^2_{|\ttt_{n-1}|}$ and rate function
\begin{equation}\label{Itheta}
I_{\theta}(x)=\sup_{\lambda\in \rr^{2(p+1)}}\{\lambda^t x-\lambda (\Gamma\otimes L^{-1})\lambda^t\}=\frac 12 x^t  (\Gamma\otimes L^{-1})^{-1} x,
\end{equation}
where $L$ and $\Gamma$ are given in (\ref{defL}) and (\ref{Gamma}) respectively.
\nthm

\brmk  Similar results about deviation inequalities and MDP,  are already obtained in \cite{BDG11},
in a restrictive case of bounded or gaussian noise and when $p=1$, but results therein hold for general Markov models also.
\nrmk

Let us consider now the estimation of the parameter in the noise process.

\bthm\label{mdp_sigma_n_rho_n} Let $(b_n)$ an increasing sequence of positive real numbers such that
$$b_n\longrightarrow\infty\qquad {\rm and}\qquad \frac{b_n}{\sqrt n}\longrightarrow 0.$$
In the case 1 or in the case 2,
\begin{enumerate}
\item[(1)] the sequence  $\displaystyle\left(\sqrt{|\ttt_{n-1}|} (\sigma^2_n-\sigma^2)/b_{|\ttt_{n-1}|}\right)_{n\ge 1}$
satisfies the MDP on $\rr$ with  speed
$b^2_{|\ttt_{n-1}|}$ and rate function
\begin{equation}\label{Isigma}
I_{\sigma^2}(x)=\frac{x^2}{\tau^4-2\sigma^4+\nu^2}.
\end{equation}

\item[(2)] the sequence  $\displaystyle\left(\sqrt{|\ttt_{n-1}|}(\rho_n-\rho)/b_{|\ttt_{n-1}|}\right)_{n\ge 1}$
satisfies the MDP on $\rr$ with  speed
$b^2_{|\ttt_{n-1}|}$ and rate function
\begin{equation}\label{Irho}
I_{\rho}(x)=\frac{x^2}{2(\nu^2-\rho^2)}.
\end{equation}
\end{enumerate}
\nthm

\brmk Note that in this case the MDP holds for all the scale $(b_n)$ verifying (\ref{scaleMDP}) without other restriction. \nrmk

\brmk It will be more interesting to prove the MDP for
$\displaystyle\left(\sqrt{|\ttt_{n-1}|}
(\hat\sigma^2_n-\sigma^2)/b_{|\ttt_{n-1}|}\right)_{n\ge 1}$, which
will be the case if one proves for example that
$\displaystyle\left(\sqrt{|\ttt_{n-1}|}
(\hat\sigma^2_n-\sigma^2)/b_{|\ttt_{n-1}|}\right)_{n\ge 1}$ and
$\displaystyle\left(\sqrt{|\ttt_{n-1}|}
(\sigma^2_n-\sigma^2)/b_{|\ttt_{n-1}|}\right)_{n\ge 1}$  are
exponentially equivalent in the sense of the MDP. This is described
by the following convergence
$$
\frac{\sqrt{|\ttt_{n-1}|}}{b_{|\ttt_{n-1}|}}(\hat{\sigma}_{n}^{2} -
\sigma_{n}^{2}) \superexp 0.$$
The proof is very technical and very restrictive for the scale of the deviation. Actually we are only able to prove that
$$
\hat{\sigma}_{n}^{2} -\sigma_{n}^{2}\superexp 0,$$ this
superexponential convergence will be proved in Theorem
\ref{thm:conv_expo_sigma_n_rho_n}. \nrmk

In the following theorem we will state the superexponential convergence.

\bthm\label{thm:conv_expo_sigma_n_rho_n}  In the case 1 or in the
case 2,  we have
\begin{equation*}
\hat{\sigma}_{n}^{2}\superexp \sigma^{2}.
\end{equation*}
In the case 1, instead of {\bf (G2)}, if we assume that

{\bf (G2')}  one can find $\gamma'>0$ such that for all $n\geq p-1,$
for all $k,l\in \g_{n+1}$ with $[\frac{k}{2}] = [\frac{l}{2}]$ and
for all $t\in ]-c,c[$ for some $c>0,$
\[
\ee\left[\exp t\left(\vep_{k}\vep_{l} -
\rho\right)\right]\leq \exp\left(\frac{\gamma'
t^{2}}{2}\right),
\]

and in the case 2, instead of {\bf (Ea)}, if we assume that

{\bf (E2')}  one can find $\gamma'>0$ such that for all $n\geq p-1,$
for all $k,l\in \g_{n+1}$ with $[\frac{k}{2}] = [\frac{l}{2}]$ and
for all $t\in \rr$
\[
\ee\left[\exp t\left(\vep_{k}\vep_{l} -
\rho\right)/\FF_{n}\right]\leq \exp\left(\frac{\gamma'
t^{2}}{2}\right),\qquad a.s.
\]

then in the case 1 or in the case 2, we have
\begin{equation*}
 \hat{\rho}_{n} \superexp \rho.
\end{equation*}
\nthm

\bigskip

Before going to the proofs, let us gather here for the convenience
of the readers two Theorems useful to establish MDP of the
martingales and used intensively in this paper. From this two
theorems, we will be able to give a strategy for the proof.

Let $M=(M_{n}, \mathcal{H}_{n}, n\geq 0)$ be a centered square
integrable martingale defined on a probability space $(\Omega,
\mathcal{H}, \mathbb{P})$ and $(\langle M\rangle_{n})$ its bracket. Let $(b_n)$ an increasing sequence
of real numbers satisfying (\ref{scaleMDP}). Let us enunciate the following which corresponds to the
unidimensional case of Theorem 1 in \cite{Dj02}.

\begin{prop}\label{mdpdj} Let $c(n):=\frac{\sqrt{n}}{b_{n}}$ is non-decreasing, and define the
reciprocal function $c^{-1}(t)$ by

\[c^{-1}(t):=\inf\{n\in \mathbb{N}: c(n)\geq t\}.\]

Under the following conditions:

\begin{enumerate}
\item [\rm(\textbf{D1})] there exists $Q\in \mathbb{R}_{+}^{*}$ such that
$\displaystyle \frac{\langle M\rangle_{n}}{n}\superexpp  Q$;
\item [\rm(\textbf{D2})]  $\displaystyle \limsup\limits_{n\rightarrow
+\infty}\frac{n}{b_{n}^{2}}\log\left(n \quad \underset{1\leq k\leq
c^{-1}(\sqrt{n+1}b_{n+1})}{\rm ess\,sup}
\mathbb{P}(|M_{k}-M_{k-1}|>b_{n}\sqrt{n}/\mathcal{H}_{k-1})\right)=-\infty;$
\item [\rm(\textbf{D3})] for all $ a>0$ $\displaystyle \frac{1}{n}\sum\limits_{k=1}^{n}\mathbb{E}\left(|M_{k}-M_{k-1}|^{2}
\mathbf{1}_{\{|M_{k}-M_{k-1}|\geq a\frac{\sqrt
{n}}{b_{n}}\}}/\mathcal{H}_{k-1} \right)\superexpp  0;$
\end{enumerate}

$(M_{n}/b_{n}\sqrt{n})_{n\in \mathbb{N}}$ satisfies the MDP in
$\mathbb{R}$ with the speed $b_{n}^{2}$ and the rate function
$\displaystyle I(x) =\frac{x^{2}}{2Q}.$
\end{prop}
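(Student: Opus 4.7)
The plan is to deduce the result from the general martingale MDP criterion of Puhalskii \cite{Puha97}, which says that for a centered square integrable martingale $(M_n)$ it suffices to establish (i) exponential tightness of $(M_n/(b_n\sqrt{n}))$ at the speed $b_n^2$, and (ii) the superexponential convergence of the normalized conditional cumulant generating functions of the martingale increments at the same speed toward $\lambda^2 Q/2$. Once these two ingredients are in place, the Fenchel--Legendre transform of $\lambda\mapsto \lambda^2 Q/2$ produces exactly the stated rate function $I(x)=x^2/(2Q)$.

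The first step I would carry out is a truncation of the increments at the threshold $b_n\sqrt{n}$. Write $\Delta_k = M_k-M_{k-1}$ and split $\Delta_k = \Delta_k^{\le} + \Delta_k^{>}$, where $\Delta_k^{\le} = \Delta_k \mathbf{1}_{\{|\Delta_k| \le b_n\sqrt{n}\}}$. Denote by $M_n^{\le}$ and $M_n^{>}$ the corresponding compensated martingales. The exponentially negligible part $M_n^{>}$ is handled by condition \textbf{(D2)}: a union bound combined with the change of index $k\le c^{-1}(\sqrt{n+1}\,b_{n+1})$ shows that $\mathbb{P}(M_n^{>}\ne 0)$ decays faster than $\exp(-C b_n^2)$ for every $C>0$, so $M_n^{>}/(b_n\sqrt n)\superexpnn 0$ and one may work with $M_n^{\le}$ only.

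For the truncated martingale I would analyze the conditional cumulant generating function
\begin{equation*}
\Lambda_n(\lambda) := \frac{n}{b_n^2}\sum_{k=1}^n \log \mathbb{E}\!\left[\exp\!\left(\tfrac{\lambda b_n}{\sqrt n}\Delta_k^{\le}\right)\Big/\mathcal H_{k-1}\right] - \lambda\,\frac{\sqrt n}{b_n}\sum_{k=1}^n\mathbb{E}[\Delta_k^{\le}/\mathcal H_{k-1}].
\end{equation*}
Since $|\lambda b_n \Delta_k^{\le}/\sqrt n|\le |\lambda|b_n^2/n\cdot n/b_n^2 \cdot(\text{bounded})$, a second-order Taylor expansion of the exponential controls the cubic and higher remainder by a factor involving $\sum_k \mathbb{E}[|\Delta_k^{\le}|^2\mathbf{1}_{\{|\Delta_k|\ge a\sqrt n/b_n\}}/\mathcal H_{k-1}]$, which is exactly the quantity condition \textbf{(D3)} forces to vanish superexponentially. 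This reduces $\Lambda_n(\lambda)$ to $\frac{\lambda^2}{2}\cdot\frac{\langle M\rangle_n}{n}$, which by \textbf{(D1)} converges superexponentially to $\lambda^2 Q/2$. The exponential tightness follows immediately from \textbf{(D1)} together with the truncation, for instance via the Dubins exponential inequality applied to the exponential supermartingale $\exp(\lambda M_n^{\le} - \tfrac12\lambda^2\langle M^{\le}\rangle_n)$.

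The main obstacle is the passage from the marginal MDP (as obtained through the Gärtner--Ellis type argument applied to $\Lambda_n$) to the full MDP in the space of interest: one must make sure that the Legendre transform is applied in the regime where $\Lambda_n$ is smooth and steep, which is exactly the role played by the hypothesis that $c(n)=\sqrt n/b_n$ is non-decreasing and by the precise form of condition \textbf{(D2)} written in terms of $c^{-1}$. With this in hand, Puhalskii's criterion yields the MDP with rate $x^2/(2Q)$ and speed $b_n^2$, which is the content of the proposition.
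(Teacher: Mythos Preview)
The paper does not prove this proposition; it is simply quoted as the one-dimensional case of Theorem~1 in \cite{Dj02}, so there is no in-paper argument to compare your sketch against.

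Your overall strategy (truncate the jumps at level $b_n\sqrt{n}$, kill the large-jump piece via \textbf{(D2)}, and run a Gärtner--Ellis/Puhalskii argument on the bounded piece using \textbf{(D1)} and \textbf{(D3)}) is indeed the route taken in \cite{Dj02}. There is, however, a genuine gap in your treatment of $M_n^{>}$. Once you \emph{compensate}, $M_n^{>}$ is no longer zero on the event ``no jump exceeds $b_n\sqrt{n}$'': the predictable compensators $\sum_{k\le n}\mathbb{E}[\Delta_k\mathbf{1}_{\{|\Delta_k|>b_n\sqrt{n}\}}\mid\mathcal H_{k-1}]$ can be nonzero regardless of whether a large jump actually occurs, so your union-bound argument for $\mathbb{P}(M_n^{>}\neq 0)$ does not go through. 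You must bound the compensator sum separately, for instance via
\[
\Big|\sum_{k\le n}\mathbb{E}[\Delta_k\mathbf{1}_{\{|\Delta_k|>b_n\sqrt{n}\}}\mid\mathcal H_{k-1}]\Big|
\le \frac{1}{b_n\sqrt{n}}\sum_{k\le n}\mathbb{E}\big[|\Delta_k|^{2}\mathbf{1}_{\{|\Delta_k|>b_n\sqrt{n}\}}\mid\mathcal H_{k-1}\big],
\]
and only then invoke \textbf{(D3)}. A second, smaller issue: your control of the cubic remainder in the expansion of $\Lambda_n(\lambda)$ is not quite right as written. The quantity $\sum_k\mathbb{E}[|\Delta_k^{\le}|^3\mid\mathcal H_{k-1}]$ is not the Lindeberg sum; one needs an intermediate cutoff $a\sqrt{n}/b_n$ with $a>0$ small so that on $\{|\Delta_k|\le a\sqrt{n}/b_n\}$ the cube is dominated by $a\frac{\sqrt{n}}{b_n}|\Delta_k|^2$ (controlled by \textbf{(D1)}), while the complementary range is exactly the \textbf{(D3)} term. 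Finally, your last paragraph on steepness and the role of $c^{-1}$ is hand-waving: in \cite{Dj02} the monotonicity of $c(\cdot)$ enters through a time-change argument that reduces to Dembo's bounded-jump MDP \cite{Dem96}, not through any smoothness issue for the Legendre transform.
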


Let us introduce a simplified version of Puhalskii's result
\cite{Puha97} applied to a sequence of martingale differences.

\begin{thm}\label{mdppuh} Let $(m_{j}^{n})_{1 \leq j \leq n}$ be a triangular
array of martingale differences with values in $\dR^{d}$, with
respect to the filtration $(\mathcal{H}_{n})_{n \geq 1}$.  Under the
following conditions
\begin{enumerate}
\item[\rm(\textbf{P1})] there exists a symmetric positive semi-definite matrix $Q$ such that
$$\frac{1}{n} \sum_{k=1}^{n} \dE\Big[ m_{k}^{n} (m_{k}^{n})^{\prime} \big\vert \mathcal{H}_{k-1} \Big] \superexpp Q,$$
\item[\rm(\textbf{P2})] there exists a constant $c > 0$ such that, for each $1 \leq k \leq n$,
$\vert m_{k}^{n} \vert \leq c \frac{\sqrt{n}}{b_{n}} \hspace{0.5cm}
\textnormal{a.s.},$
\item[\rm(\textbf{P3})] for all $a > 0$, we have the exponential Lindeberg's condition
$$\frac{1}{n} \sum_{k=1}^n \dE\Big[\vert m_{k}^{n} \vert^2 \mathrm{I}_{\left\{ \vert m_{k}^{n} \vert \geq a \frac{\sqrt{n}}{b_{n}} \right\}} \big\vert \mathcal{H}_{k-1} \Big] \superexpp 0.$$
\end{enumerate}
$(\sum_{k=1}^{n} m_{k}^{n}/(b_{n} \sqrt{n}))_{n \geq 1}$ satisfies
an MDP on $\dR^{d}$ with speed $b_{n}^2$ and rate function
\begin{equation*}
\Lambda^*(v) = \sup_{\lambda \in \dR^{d}} \left( \lambda^{\prime} v
- \frac{1}{2}\lambda^{\prime} Q \lambda\right).
\end{equation*}
In particular, if $Q$ is invertible, $\Lambda^*(v) = \frac{1}{2}
v^{\prime} Q^{-1} v.$
\end{thm}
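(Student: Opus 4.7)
The statement is a standard MDP for triangular arrays of martingale differences due to Puhalskii, and I would prove it by applying the Gärtner--Ellis theorem (see \cite{DemZei98}, Theorem 2.3.6) to $S_n := T_n/(b_n\sqrt{n})$, where $T_n := \sum_{k=1}^n m_k^n$. Concretely, I would establish that for every $\lambda \in \dR^d$,
\begin{equation*}
\Lambda_n(\lambda) := \frac{1}{b_n^2}\log \dE\Bigl[\exp\bigl(\tfrac{b_n}{\sqrt n}\lambda'T_n\bigr)\Bigr] \longrightarrow \frac{1}{2}\lambda'Q\lambda.
\end{equation*}
Since this limit is finite, differentiable, and essentially smooth on $\dR^d$, Gärtner--Ellis then delivers the MDP at speed $b_n^2$ with rate function $\Lambda^*(v) = \sup_\lambda(\lambda'v-\tfrac12\lambda'Q\lambda)$, which reduces to $\tfrac12 v'Q^{-1}v$ when $Q$ is invertible.

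To compute $\Lambda_n(\lambda)$, I would introduce the conditional log-cumulant increments $\phi_k(\lambda) := \log \dE[\exp(\tfrac{b_n}{\sqrt n}\lambda' m_k^n)\,|\, \mathcal{H}_{k-1}]$, so that $\mathcal{Z}_n(\lambda) := \exp\bigl(\tfrac{b_n}{\sqrt n}\lambda'T_n - \sum_{k=1}^n \phi_k(\lambda)\bigr)$ is an $(\mathcal{H}_k)$-martingale of mean $1$. The boundedness assumption (P2) yields $|\tfrac{b_n}{\sqrt n}\lambda' m_k^n|\le c|\lambda|$ almost surely, so a uniform third-order Taylor expansion of the exponential gives $\phi_k(\lambda) = \frac{b_n^2}{2n}\lambda' \dE[m_k^n(m_k^n)'\,|\,\mathcal{H}_{k-1}]\lambda + R_k$ with $|R_k|\le C_\lambda(\tfrac{b_n}{\sqrt n})^3\,\dE[|m_k^n|^3\,|\,\mathcal{H}_{k-1}]$. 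Splitting $|m_k^n|^3$ according to $\{|m_k^n|\ge a\sqrt n/b_n\}$ and its complement, and using (P2) on the tail, I can bound $\frac{1}{b_n^2}\sum_k |R_k|$ by $O(a)$ via (P1) plus a term that vanishes superexponentially fast via (P3). Letting $a\to 0$ after the limit and combining with the matrix convergence (P1) yields the key superexponential convergence
\begin{equation*}
\frac{1}{b_n^2}\sum_{k=1}^n \phi_k(\lambda) \superexpp \frac{1}{2}\lambda' Q \lambda.
\end{equation*}

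The main obstacle is lifting this convergence of a random cumulant sum to the convergence of the deterministic $\Lambda_n(\lambda)$. Using the identity $\dE[e^{\frac{b_n}{\sqrt n}\lambda'T_n}] = \dE[\mathcal{Z}_n(\lambda)\exp(\sum_k\phi_k(\lambda))]$ together with $\dE[\mathcal{Z}_n(\lambda)]=1$, and the fact that (P2) forces $\sum_k\phi_k(\lambda)$ to be uniformly bounded above by a deterministic multiple of $b_n^2$ (while $\mathcal{Z}_n(\lambda)$ has exponential moments controlled by (P2)), a Laplace-type split of the expectation over the event $\{|\tfrac{1}{b_n^2}\sum_k\phi_k(\lambda) - \tfrac12\lambda'Q\lambda|<\varepsilon\}$ and its complement converts the superexponential convergence into $\Lambda_n(\lambda) \to \tfrac12\lambda'Q\lambda$. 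Gärtner--Ellis then yields the MDP upper bound on closed sets, while the essential smoothness of the quadratic form $\Lambda$ delivers the matching lower bound on open sets, completing the proof.
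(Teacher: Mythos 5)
First, a point of reference: the paper offers no proof of this statement at all --- it is quoted as ``a simplified version of Puhalskii's result'' and justified solely by the citation to \cite{Puha97} (with \cite{Dem96}, \cite{Wor01a} and \cite{Dj02} as companions). So your proposal is not competing with an argument in the text but with the literature it points to, and it must therefore stand on its own.

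As written, your G\"artner--Ellis strategy has a genuine gap at exactly the step you flag as ``the main obstacle''. Writing $\phi_k(\lambda)=\log\dE[\exp(\tfrac{b_n}{\sqrt n}\lambda'm_k^n)\mid\mathcal{H}_{k-1}]$, the bound that (P2) actually yields (via $e^x\le 1+x+\tfrac{x^2}{2}e^{|x|}$ and $\dE[m_k^n\mid\mathcal{H}_{k-1}]=0$) is $\phi_k(\lambda)\le \tfrac{1}{2}e^{c|\lambda|}\tfrac{b_n^2}{n}\,\lambda'\dE[m_k^n(m_k^n)'\mid\mathcal{H}_{k-1}]\lambda$; since (P2) only controls $\dE[|m_k^n|^2\mid\mathcal{H}_{k-1}]$ by $c^2n/b_n^2$, the deterministic almost-sure bound on $\sum_k\phi_k(\lambda)$ is of order $n$, not of order $b_n^2$ as you assert. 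Consequently the Laplace split over $A_n=\{|\tfrac{1}{b_n^2}\sum_k\phi_k(\lambda)-\tfrac12\lambda'Q\lambda|<\varepsilon\}$ does not close: on $A_n^c$ the integrand $\mathcal{Z}_n(\lambda)e^{\sum_k\phi_k(\lambda)}$ can be as large as $e^{Cn}$, while (P1) only gives $\dP(A_n^c)\le e^{-Kb_n^2}$ for every $K$, and $b_n^2=o(n)$, so the bad-event contribution need not be negligible at scale $b_n^2$ (moreover $\dE[\mathcal{Z}_n\mathbf{1}_{A_n^c}]$ is a probability under the tilted measure, to which the superexponential bound on $\dP(A_n^c)$ does not transfer for free). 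This is precisely why the martingale MDP is not a direct corollary of G\"artner--Ellis. The standard repair --- the route of Dembo \cite{Dem96}, Worms \cite{Wor01a} and Djellout \cite{Dj02} --- is localization: introduce the stopping time $\tau_n=\inf\{k:\ \|\tfrac1n\sum_{j\le k}\dE[m_j^n(m_j^n)'\mid\mathcal{H}_{j-1}]-\tfrac{k}{n}Q\|>\delta\}$, prove the MDP for the stopped sum (for which the cumulant sum \emph{is} deterministically $\tfrac12 b_n^2\lambda'Q\lambda+O(\delta b_n^2)$, so G\"artner--Ellis applies), and then use (P1) to show that the stopped and unstopped sums are exponentially equivalent at speed $b_n^2$. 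Puhalskii's own proof in \cite{Puha97} proceeds differently again, through his maxingale and idempotent-probability machinery. With the stopping-time step inserted, your outline becomes essentially the Dembo--Worms proof; without it, the argument does not go through.
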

As the reader can imagine naturally now, the strategy of the proof
of the MDP  consist on the following steps :
\begin{itemize}
\item the superexponential convergence of the quadratic variation of the martingale $(M_n)$.
This step is very crucial and the key for the rest of the paper. It
will be realized by means of powerful exponential inequalities. This
allows us to obtain the deviation inequalities for the estimator of
the parameters,
\item introduce a truncated martingale which satisfies the MDP, thanks to a classical
theorems \ref{mdppuh},
\item the truncated martingale is an exponentially good approximation of $(M_n)$, in the sense of the moderate deviation.
\end{itemize}

\section{Superexponential convergence of the quadratic variation of the martingale }\label{appendixProp}

At first, it is necessary to establish the superexponential
convergence of  the quadratic variation of the martingale $(M_n)$,
properly normalized in order to prove the MDP, of the estimators.
Its proof is very technical, but crucial for the rest of the paper.
This section contains also some deviation inequalities for some
quantities needed in the proof later.

\bprop\label{prop:convergence_crochet} In the case 1 or case 2, we have
\begin{equation}\label{convergence_crochet}
\frac{S_n}{|\ttt_n|}\superexpn L,
\end{equation}
where $S_n$ is given in (\ref{defS_n}) and $L$ is given in (\ref{defL}).
 \nprop
For the proof we focus in the case 2. The Proposition \ref{convergence_crochet} will follows from Proposition \ref{convexpocrochet1} and Proposition \ref{convexpocrochet2} below, where we assume that the sequence  $(b_n)$ satisfies the
condition {\bf (V2)}. Proposition \ref{convexpocrochet3} gives some ideas of the proof in the case 1.

\brmk Using \cite{DjGuWu06}, we infer from {\bf (Ea)} that
\begin{enumerate}
\item[{\bf (N2)}]  one can find $\gamma>0$ such
that for all $n\geq p-1,$ for all $k\in \g_{n+1}$ and for all $t\in
\rr$
\[
\ee\left[\exp t\left(\vep_{k}^{2} - \sigma^{2}\right)/\FF_{n}\right]\leq
\exp\left(\frac{\gamma t^{2}}{2}\right)\qquad a.s.
\]
\end{enumerate}
\nrmk

\begin{prop}\label{convexpocrochet1}
Assume that hypothesis {\bf (N2)} and {\bf (Xa)} are satisfied. Then
we have
\[
\frac{1}{|\ttt_{n}|}\sum\limits_{k\in\ttt_{n,p}}\xx_{k} \superexpn \Xi,
\]
where $\Xi$ is given in (\ref{defL_12}).
\end{prop}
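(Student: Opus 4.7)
The plan is to reduce convergence of the tree-average $\overline T_n := |\ttt_n|^{-1}\sum_{k\in\ttt_{n,p}}\xx_k$ to that of the generation-averages $T_n := |\g_n|^{-1}\sum_{k\in\g_n}\xx_k$, and to handle the latter via a simple linear recursion driven by a conditionally subgaussian noise. First, summing the BAR($p$) relations \eqref{bar_p2} over $m\in\g_n$ and dividing by $|\g_{n+1}|=2|\g_n|$ yields
\[
T_{n+1} \;=\; \overline A\,T_n + \overline a\, e_1 + N_n,\qquad N_n := \frac{1}{|\g_{n+1}|}\sum_{k\in\g_{n+1}}\vep_k\, e_1.
\]
Since $\Xi$ is by \eqref{defL_12} the unique fixed point $\Xi=\overline A\,\Xi + \overline a\, e_1$, iterating this from $n=p-1$ gives the representation
\[
T_n - \Xi \;=\; \overline A^{\,n-p+1}(T_{p-1}-\Xi) \;+\; \sum_{j=p-1}^{n-1}\overline A^{\,n-1-j}\, N_j.
\]

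The two terms are then handled separately. For the deterministic-initial term, $\|\overline A\|\le\beta$ and $\|T_{p-1}\|\le\overline X_1$ combined with the sub-$a$-exponential tail from \textbf{(Xa)} give $\pp(\|\overline A^{\,n-p+1}(T_{p-1}-\Xi)\|>\delta)\le C\exp(-c\,\delta^a\beta^{-na})$; since $a>2$, a direct check in each of the three regimes of \textbf{(V2)} shows $\beta^{-na}/b_{|\ttt_n|}^2\to\infty$, so this term is $(b_{|\ttt_n|}^2)$-superexponentially small. For the noise sum, the conditional-independence structure of case 2, together with \textbf{(N1)} and Cauchy--Schwarz, makes each $\vep_{2m}+\vep_{2m+1}$ conditionally subgaussian given $\FF_n$, and the sums over the conditionally independent offspring pairs combine through the standard martingale MGF argument to give, for every unit vector $u\in\rr^p$,
\[
\ee\!\left[\exp\!\left(t\,u^{t}\sum_{j=p-1}^{n-1}\overline A^{\,n-1-j}N_j\right)\right] \le \exp\!\left(\phi\,t^{2}\,V_n\right),\qquad V_n := \sum_{j=p-1}^{n-1}\frac{\beta^{2(n-1-j)}}{|\g_{j+1}|}.
\]
An elementary geometric-series computation yields $V_n\asymp 2^{-n}$, $n\,2^{-n}$, or $\beta^{2n}$ according as $\beta^2<1/2$, $=1/2$, or $>1/2$, so that a Chernoff bound combined with a union bound over a finite $\delta$-net of the unit sphere gives $\pp(\|\sum_j\overline A^{\,n-1-j}N_j\|>\delta)\le C\exp(-c\,\delta^2/V_n)$. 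Each asymptotic of $V_n$ is then matched precisely by the corresponding regime of \textbf{(V2)}, so the noise term is also $(b_{|\ttt_n|}^2)$-superexponentially small, and hence $T_n\superexpn\Xi$.

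To transfer this to $\overline T_n$, the identity $|\ttt_n|\overline T_n = \sum_{j=p}^n |\g_j|T_j$ expresses $\overline T_n-\Xi = \sum_{j=p}^n \frac{|\g_j|}{|\ttt_n|}(T_j-\Xi) + O(2^{-n})$ as a convex combination of the $T_j-\Xi$'s up to a negligible deterministic bias. Plugging in the representation of $T_j-\Xi$ and exchanging the order of summation produces exactly the same structure as $T_n-\Xi$, but with $\overline A^{\,n-1-i}$ replaced by $D_{n,i} := |\ttt_n|^{-1}\sum_{j=\max(p,i+1)}^{n}|\g_j|\,\overline A^{\,j-1-i}$. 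Because the weights $|\g_j|/|\ttt_n|$ concentrate on $j=n$, a direct computation shows that $\|D_{n,i}\|^2/|\g_{i+1}|$ obeys the same three asymptotics as the summand of $V_n$, so the subgaussian/$\delta$-net analysis transfers verbatim and yields the claim. The main obstacle is the case-by-case bookkeeping of the noise step: the three regimes of $V_n$ must be matched one-for-one with the three regimes of \textbf{(V2)}, and in particular the extra $\log n$ factors in \textbf{(V2)} when $\beta^2\ge 1/2$ are precisely what is needed to absorb the slow decay of $V_n$.
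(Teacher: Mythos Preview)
Your argument is correct and takes a somewhat different route from the paper's own proof. The paper works directly with the tree sum $H_n=\sum_{k\in\ttt_{n,p-1}}\xx_k$ and quotes from \cite{BSG09} the decomposition
\[
\frac{H_n}{2^{n+1}}=\sum_{k=p-1}^{n}(\overline A)^{n-k}\frac{H_{p-1}}{2^k}
+\sum_{k=p}^{n}\overline a\,(\overline A)^{n-k}\Big(\tfrac{2^k-2^{p-1}}{2^k}\Big)e_1
+\sum_{k=p}^{n}\frac{P_k}{2^{k+1}}(\overline A)^{n-k}e_1,
\]
where $P_k=\sum_{i\in\ttt_{k,p}}\vep_i$ are cumulative noise sums over the whole subtree. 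It then declares that the first and third terms are $(b_{|\ttt_n|}^2)$-superexponentially negligible by the very same techniques developed in the proof of the companion result for $K_n$ (union bounds over $k$, deviation inequalities for each $P_k$, and the three-regime case analysis in $\beta$). You instead first establish the generation-level convergence $T_n\superexpn\Xi$ via the clean AR(1) recursion $T_{n+1}=\overline A\,T_n+\overline a\,e_1+N_n$, and only then aggregate over generations. The payoff of your route is that the noise terms $N_j$ are automatically martingale differences in the generation index, so a single iterated-conditioning MGF bound suffices and gives the variance proxy $V_n$ directly, without passing through the nested sums $P_k$ or invoking the heavier machinery of the $K_n$ proof. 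Conversely, the paper's decomposition has the advantage of being already available from \cite{BSG09} and parallel to what is needed for the second-order quantity.

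Two small remarks. First, your use of {\bf (Xa)} for the initial term is stronger than needed: the paper manages with {\bf (X2)} alone (compare Lemma~\ref{lconv1}), since the bound $\|\overline A^{\,n-p+1}(T_{p-1}-\Xi)\|\le c\beta^n(\overline X_1+\|\Xi\|)$ combined with $\ee[e^{\tau\overline X_1^2}]<\infty$ already gives decay $\exp(-c\delta^2\beta^{-2n})$, which beats $b_{|\ttt_n|}^2$ in every regime of {\bf (V2)}. Second, the claim that ``$\|D_{n,i}\|^2/|\g_{i+1}|$ obeys the same three asymptotics as the summand of $V_n$'' is not literally true term-by-term when $\beta<1/2$ (there $\|D_{n,i}\|\asymp 2^{i-n}$, not $\beta^{n-1-i}$), but what you actually need---and what a direct computation confirms---is that the \emph{sum} $\tilde V_n=\sum_i\|D_{n,i}e_1\|^2/|\g_{i+1}|$ has the same order as $V_n$ in each of the three regimes $\beta^2\lessgtr 1/2$. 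With that adjustment the transfer step goes through.
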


\begin{proof}
Let
\[
H_n=\sum_{k\in \ttt_{n,p-1}}\xx_k \qquad {\rm
and}\qquad P_n=\sum_{k\in \ttt_{n,p}}\epsilon_k.
\]
From Bercu et al. \cite{BSG09}, we have
\begin{equation}\label{H_n}
\frac{H_{n}}{2^{n+1}} =
\sum\limits_{k=p-1}^{n} (\overline{A})^{n-k}\frac{H_{p-1}}{2^{k}} +
\sum\limits_{k=p}^{n}
\overline{a}(\overline{A})^{n-k}\left(\frac{2^{k}-2^{p-1}}{2^{k}}\right)e_{1}
+ \sum\limits_{k=p}^{n}
\frac{P_{k}}{2^{k+1}}(\overline{A})^{n-k}e_{1}.
\end{equation}
Since the second term in the right hand side of this equality is deterministic, this proposition will be proved if we show that
\begin{equation}\label{conv_sum_H_p1}
\sum\limits_{k=p-1}^{n} \frac{(\overline{A})^{n-k}}{2^{k}}H_{p-1}
\superexpn 0, \qquad \sum\limits_{k=p}^{n}
\frac{P_{k}}{2^{k+1}}(\overline{A})^{n-k}e_{1} \superexpn 0,
\end{equation}
which follows by performing as in the proof of Proposition
\ref{convexpocrochet2} (see the proof of Proposition
\ref{convexpocrochet2} for more details).
\end{proof}
\begin{prop}\label{convexpocrochet2}
Assume that hypothesis {\bf (N2)} and {\bf (Xa)} are satisfied. Then
we have
\[
\frac{1}{|\ttt_{n}|}\sum\limits_{k\in\ttt_{n,p}}\xx_{k}\xx_{k}^{t}
\superexpn \Lambda,
\]
where $\Lambda$ is given in (\ref{defL_22}).
\end{prop}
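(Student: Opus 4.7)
The plan is to exploit the generation-to-generation recursion satisfied by $\Sigma_{m}:=\sum_{k\in\g_{m}}\xx_{k}\xx_{k}^{t}$, identify $\Lambda$ as the unique fixed point of a contraction, and then average over generations.

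First, I would derive the recursion. From $\xx_{2j}=A\xx_{j}+\eta_{2j}$ and $\xx_{2j+1}=B\xx_{j}+\eta_{2j+1}$, expanding the outer products $\xx_{2j}\xx_{2j}^{t}+\xx_{2j+1}\xx_{2j+1}^{t}$ and summing over $j\in\g_{m}$ yields
$$\Sigma_{m+1}=A\Sigma_{m}A^{t}+B\Sigma_{m}B^{t}+R_{m+1},$$
where $R_{m+1}$ gathers cross terms of the form $(a_{0}+\vep_{2k})A\xx_{k}e_{1}^{t}$, their $B$-analogues, and squared-noise terms $(a_{0}+\vep_{2k})^{2}e_{1}e_{1}^{t}$ and $(b_{0}+\vep_{2k+1})^{2}e_{1}e_{1}^{t}$. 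Taking the $\FF_{m}$-conditional expectation and invoking Proposition \ref{convexpocrochet1} (which gives $H_{m}/|\g_{m}|\superexpn\Xi$ for $H_{m}=\sum_{k\in\g_{m}}\xx_{k}$), I would obtain
$$\frac{1}{|\g_{m}|}\mathbb{E}[R_{m+1}\mid\FF_{m}]\superexpn 2T,$$
with $T$ given in (\ref{defT}). The martingale part of $R_{m+1}/|\g_{m}|$ is built from averages such as $|\g_{m}|^{-1}\sum_{k\in\g_{m}}\vep_{2k}\xx_{k}$ and $|\g_{m}|^{-1}\sum_{k\in\g_{m}}(\vep_{2k}^{2}-\sigma^{2})$. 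Conditionally on $\FF_{m}$ these are sums of centered random variables with sub-Gaussian (respectively sub-exponential) tails by (N2); a conditional Chernoff estimate, combined with exponential moments of $\|\xx_{k}\|$ propagated from (Xa) through the BAR($p$) recursion, shows that $\delta_{m}:=R_{m+1}/2^{m+1}-T\superexpn 0$ at the speed $b_{|\ttt_{m}|}^{2}$ dictated by (V2).

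Having understood $R_{m+1}$, I would iterate. Writing $W_{m}:=\Sigma_{m}/2^{m}$ and using that $\Lambda$ solves $\Lambda=T+\tfrac{1}{2}(A\Lambda A^{t}+B\Lambda B^{t})$, the deviation $D_{m}:=W_{m}-\Lambda$ satisfies
$$D_{m+1}=\tfrac{1}{2}(AD_{m}A^{t}+BD_{m}B^{t})+\delta_{m}.$$
Because the linear map $M\mapsto\tfrac{1}{2}(AMA^{t}+BMB^{t})$ on symmetric matrices has operator norm at most $\beta^{2}<1$, iteration gives
$$\|D_{n}\|\leq \beta^{2(n-p+1)}\|D_{p-1}\|+\sum_{m=p-1}^{n-1}\beta^{2(n-1-m)}\|\delta_{m}\|,$$
where the initial term is controlled by (Xa). A union bound over $m$ then transfers the superexponential decay of each $\|\delta_{m}\|$ into superexponential decay of $\|D_{n}\|$. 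Finally, the target quantity is $|\ttt_{n}|^{-1}\sum_{k\in\ttt_{n,p}}\xx_{k}\xx_{k}^{t}=\sum_{m=p}^{n}(2^{m}/|\ttt_{n}|)W_{m}$, whose weights sum to $1+o(1)$, so one last union bound over generations yields the desired superexponential convergence of this weighted average to $\Lambda$.

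The main obstacle is the circularity hiding in the martingale step: the conditional sub-Gaussian bound on $\sum_{k\in\g_{m}}\vep_{2k}\xx_{k}$ has conditional variance $\sigma^{2}\sum_{k\in\g_{m}}\|\xx_{k}\|^{2}$, which is precisely a diagonal entry of $\Sigma_{m}$, the very object whose convergence we are trying to establish. Breaking this circularity requires separately controlling exponential moments of $\|\xx_{k}\|^{a}$ generation by generation, using (Xa) as an initial input and (N1) to propagate across generations; this is why the exponent $a>2$ is needed in (Xa), and also why the trichotomy in (V2) appears, namely, the comparison between $\beta^{2}$ and $1/2$ decides whether the accumulated martingale variance along generations is summable, marginal, or dominant, producing the three regimes of (V2).
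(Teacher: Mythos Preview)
Your plan is essentially the paper's proof, reorganized. The paper also starts from the one-step recursion you write for $\Sigma_{m+1}$, but instead of iterating the deviation $D_m=W_m-\Lambda$, it iterates the recursion for $K_n=\sum_{k\in\ttt_{n,p-1}}\xx_k\xx_k^t$ all the way down to $K_{p-1}$, obtaining
\[
\frac{K_n}{2^{n+1}}=\frac{1}{2^{n-p+1}}\sum_{C\in\{A;B\}^{n-p+1}}C\frac{K_{p-1}}{2^p}C^t+\sum_{k=0}^{n-p}\frac{1}{2^k}\sum_{C\in\{A;B\}^k}CT_{n-k}C^t,
\]
and then treats the five contributions to $T_{n-k}$ (the $\sigma^2$-part, the $H$-part, the mixed-noise part, the $\varepsilon\xx$-part, and the initial-state part) in five separate lemmas. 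Your geometric-series bound $\|D_n\|\le\beta^{2(n-p+1)}\|D_{p-1}\|+\sum_m\beta^{2(n-1-m)}\|\delta_m\|$ followed by a union bound over $m$ is exactly the mechanism behind these lemmas; the paper simply writes the outcome of the iteration explicitly. So the overall architecture is the same.

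The one place where you are vaguer than the paper is precisely the circularity you flag. You propose to break it by ``controlling exponential moments of $\|\xx_k\|^a$ generation by generation'', but that alone does not give a deviation inequality for $\sum_{k\in\g_m}\|\xx_k\|^2$, because the $\xx_k$ within a generation are dependent. The paper resolves this with a \emph{pathwise} bound, not a moment bound: it writes
\[
\sum_{k\in\ttt_{n,p-1}}\|\xx_k\|^2\le \frac{4}{1-\beta}P_n+\frac{4\alpha^2}{1-\beta}Q_n+2\overline X_1^2 R_n,
\]
where $P_n$ is a weighted sum of the $\varepsilon_{[k/2^i]}^2$, $Q_n$ is deterministic, and $R_n\overline X_1^2$ involves only the initial state. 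Since $P_n$ is a sum over (conditionally) independent squared noises, (N2) gives a clean Chernoff bound for it, and the whole thing reduces to the three $\beta^2$ vs.\ $1/2$ cases of (V2). This is the missing concrete ingredient in your sketch; once you plug it in, your argument and the paper's coincide. A minor point: Proposition~\ref{convexpocrochet1} gives $|\ttt_n|^{-1}\sum_{k\in\ttt_{n,p}}\xx_k\to\Xi$, not the per-generation statement $H_m/|\g_m|\to\Xi$ you invoke; you would need to derive the latter (or, as the paper does, carry the $H$-contribution through the full iteration rather than handle it generation by generation).
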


\begin{proof}
Let
\begin{equation}\label{KL}
K_{n}=\sum\limits_{k\in\ttt_{n,p-1}}\xx_{k}\xx_{k}^{t}\qquad
\text{and} \qquad
L_{n}=\sum\limits_{k\in\ttt_{n,p}}\vep_{k}^{2}.\end{equation} Then
from (\ref{bar_p2}), and after straightforward calculations (see
\cite{BSG09} for more details), we get  that
\[
\frac{K_{n}}{2^{n+1}}=\frac{1}{2^{n-p+1}}\sum\limits_{C\in\{A;B\}^{n-p+1}}C\frac{K_{p-1}}{2^{p}}C^{t}
+
\sum\limits_{k=0}^{n-p}\frac{1}{2^{k}}\sum\limits_{C\in\{A;B\}^{k}}CT_{n-k}C^{t},
\]
where  the notation $\{A;B\}^k$ means the set of all products of $A$
and $B$  with exactly $k$ terms. The cardinality of $\{A;B\}^k$  is
obviously $2^k$, and
$$T_k =\frac{L_{k}}{2^{k+1}}e_1e_1^t+\overline{a^2}\left(\frac{2^{k}-2^{p-1}}{2^{k}}\right)e_1e_1^t+I_k^{(1)}+ I_k^{(2)}+\frac{1}{2^{k+1}}U_k$$

with $\overline{a^2}=(a_0^2+b_0^2)/2$ and
\begin{equation}\label{I_n,1}I_{k}^{(1)}=\frac 12
\left(a_0\left(A\frac{H_{k-1}}{2^{k}}e_1^t+e_1\frac{H_{k-1}}{2^{k}}A^t\right)+b_0\left(B\frac{H_{k-1}}
{2^{k}}e_1^t+e_1\frac{H_{k-1}}{2^{k}}B^t\right)\right),
\end{equation}

\begin{equation}\label{I_n,2}
I_{k}^{(2)}=\left(\frac{1}{2^{k}}\sum_{l\in
\ttt_{k-1,p-1}}(a_0\vep_{2l}+b_0\vep_{2l+1})\right)e_1e_1^t,\end{equation}

\begin{equation}\label{U_k}U_{k}=\sum\limits_{l\in\ttt_{k-1,p-1}}\vep_{2l}\Big(A\xx_le_1^t+e_1\xx_l^tA^t\Big)+\vep_{2l+1}\Big(B\xx_le_1^t+e_1\xx_l^tB^t\Big).\end{equation}

\medskip

Then proposition will follow if we prove Lemmas \ref{lconv1},  \ref{lconv2}, \ref{lconv3}, \ref{lconv4} and  \ref{lconv5}.
\begin{lem}\label{lconv1} Assume that hypothesis  {\bf (Xa)} is satisfied. Then
we have
\begin{equation}\label{conv1}
\frac{1}{2^{n-p+1}}\sum\limits_{C\in\{A;B\}^{n-p+1}}C\frac{K_{p-1}}{2^{p}}C^{t}\superexpn
0,
\end{equation}
where $K_p$ is given in (\ref{KL}).
\end{lem}

\begin{proof} We get easily
\[
\left\|\frac{1}{2^{n-p+1}}\sum\limits_{C\in\{A;B\}^{n-p+1}}C\frac{K_{p-1}}{2^{p}}C^{t}\right\|\leq
c\beta^{2n}\overline{X}_{1}^{2},
\]
where $\beta$ is given in (\ref{beta}), $\overline{X}_{1}$ is introduced in
{\bf(Xa)} and $c$ is a positive constant which depends on $p$. Next,
Chernoff inequality and hypothesis {\bf (X2)} lead us easily to
(\ref{conv1}).
\end{proof}

\begin{lem} \label{lconv2} Assume that hypothesis {\bf (N2)} and {\bf (Xa)} are satisfied. Then
we have

\begin{equation}\label{conv2}
\sum\limits_{k=0}^{n-p}\frac{1}{2^{k}}\sum\limits_{C\in\{A;B\}^{k}}C\frac{L_{n-k}}
{2^{n-k}}e_{1}e_{1}^{t}C^{t}\superexpn
\sigma^{2}e_{1}e_{1}^{t},
\end{equation}
where $L_k$ is given in the second part of (\ref{KL}).
\end{lem}

\begin{proof} First, since we have for all $k\geq p$ the following
  decomposition on odd and even part
$$\sum\limits_{i\in\ttt_{k,p}}(\vep_{i}^{2}-\sigma^{2})=\sum\limits_{i\in\ttt_{k-1,p-1}}(\vep_{2i}^{2}-\sigma^{2})+(\vep_{2i+1}^{2}-\sigma^{2}),$$
we obtain for all $\delta>0$ that
\begin{equation*}
\pp\left(\frac{1}{|\ttt_{k}|+1}\sum\limits_{i\in\ttt_{k,p}}(\vep_{i}^{2}-\sigma^{2})>\delta\right)\leq\sum_{\eta=0}^{1}
\pp\left(\frac{1}{|\ttt_{k}|+1}\sum\limits_{i\in\ttt_{k-1,p-1}}(\vep_{2i+\eta}^{2}-\sigma^{2})>\frac{\delta}{2}\right).
\end{equation*}
We will treat only the case $\eta=0$. Chernoff inequality gives us for all $\lambda>0$
\[
\pp\left(\frac{1}{|\ttt_{k}|+1}\sum\limits_{i\in\ttt_{k-1,p-1}}(\vep_{2i}^{2}-\sigma^{2})>\frac{\delta}{2}\right)
\leq
\exp\left(-\lambda\frac{\delta}{2}2^{k+1}\right)\ee\left[\exp\left(\lambda\sum\limits_{i\in\ttt_{k-1,p-1}}(\vep_{2i}^{2}
-\sigma^{2})\right)\right].
\]
We obtain from hypothesis {\bf (N2)}, after conditioning by
$\FF_{k-1}$
\[
\ee\left[\exp\left(\lambda\sum\limits_{i\in\ttt_{k-1,p-1}}(\vep_{2i}^{2}
-\sigma^{2})\right)\right]\leq
\exp\left(\lambda^{2}\gamma|\g_{k-1}|\right)\ee\left[\exp\left(\lambda\sum\limits_{i\in\ttt_{k-2,p-1}}(\vep_{2i}^{2}
-\sigma^{2})\right)\right].
\]
Iterating this, we deduce that
\[
\ee\left[\exp\left(\lambda\sum\limits_{i\in\ttt_{k-1,p-1}}(\vep_{2i}^{2}
-\sigma^{2})\right)\right]\leq
\exp\left(\gamma\lambda^{2}\sum\limits_{l=p-1}^{k-1}|\g_{l}|\right)\leq
\exp\left(\gamma\lambda^{2}2^{k+1}\right).
\]
Next, optimizing on $\lambda$, we get
\[
\pp\left(\frac{1}{|\ttt_{k}|+1}\sum\limits_{i\in\ttt_{k-1,p-1}}(\vep_{2i}^{2}-\sigma^{2})>\frac{\delta}{2}\right)
\leq \exp\left(-c\delta^{2}|\ttt_{k}|\right)
\]
for some positive constant $c$ which depends on $\gamma$. Applying the foregoing to the random variables
$-(\vep_{i}^{2}-\sigma^{2})$, we obtain
\begin{equation}\label{devineqeps2}
\pp\left(\frac{1}{|\ttt_{k}|+1}\left|\sum\limits_{i\in\ttt_{k,p}}(\vep_{i}^{2}-\sigma^{2})\right|>\delta\right)\leq
4\exp\left(-c\delta^{2}|\ttt_{k}|\right).
\end{equation}
Next, from the following inequalities
\begin{eqnarray*}
\left\|\sum\limits_{k=0}^{n-p}\frac{1}{2^{k}}\sum\limits_{C\in\{A;B\}^{k}}C\frac{L_{n-k}-\sigma^{2}}{2^{n-k}}
e_{1}e_{1}^{t}C^{t}\right\|& \leq&
\sum\limits_{k=0}^{n-p}\frac{1}{2^{k}}\sum\limits_{C\in\{A;B\}^{k}}\frac{|L_{n-k}-\sigma^{2}|}{2^{n-k}}
\left\|Ce_{1}e_{1}^{t}C^{t}\right\|\\ & \leq&
\sum\limits_{k=p}^{n}\beta^{2(n-k)}\frac{|L_{k}-\sigma^{2}|}{|\ttt_{k}|+1}
\end{eqnarray*}

and from (\ref{devineqeps2}) applied with $\delta/((n-p+1)\beta^{2(n-k)})$ instead of $\delta$, we get
\begin{align}
\notag\pp\left(\left\|\sum\limits_{k=0}^{n-p}\frac{1}{2^{k}}\sum\limits_{C\in\{A;B\}^{k}}C\frac{L_{n-k}-\sigma^{2}}{2^{n-k}}
e_{1}e_{1}^{t}C^{t}\right\|>\delta\right)&\leq
\pp\left(\sum\limits_{k=p}^{n}\beta^{2(n-k)}\frac{|L_{k}-\sigma^{2}|}{|\ttt_{k}|+1}>\delta\right)
\\ \notag &\leq
\sum\limits_{k=p}^{n}\pp\left(\frac{|L_{k}-\sigma^{2}|}{|\ttt_{k}|+1}>\frac{\delta}{(n-p+1)\beta^{2(n-k)}}\right)
\\ \notag &\leq
c_{1}\sum\limits_{k=p}^{n}\exp\left(-c_{2}\delta^{2}\frac{(2\beta^{4})^{k+1}}{n^{2}\beta^{4n}}\right).
\end{align}
Now, following the same lines as in the proof of
(\ref{conv5}) we obtain
\begin{equation}\label{ineq_L}
\pp\left(\left\|\sum\limits_{k=0}^{n-p}\frac{1}{2^{k}}\sum\limits_{C\in\{A;B\}^{k}}C\frac{L_{n-k}-\sigma^{2}}{2^{n-k}}
e_{1}e_{1}^{t}C^{t}\right\|>\delta\right)\leq\begin{cases}
c_{1}\exp\left(-c_{2}\delta^{2}\frac{2^{n+1}}{n^{2}}\right)
\hspace{0.25cm} \text{if $\beta^{4}<\frac{1}{2}$}, \\
c_{1}n\exp\left(-c_{2}\delta^{2}\frac{2^{n+1}}{n^{2}}\right)
\hspace{0.25cm} \text{if $\beta^{4}=\frac{1}{2}$}, \\
c_{1}\exp\left(-c_{2}\delta^{2}\frac{1}{n^{2}\beta^{4n}}\right)
\hspace{0.25cm} \text{if $\beta^{4}>\frac{1}{2}$}, \end{cases}
\end{equation}
for some positive constants $c_{1}$ and $c_{2}$. From (\ref{ineq_L}), we infer
that (\ref{conv2}) holds.

\end{proof}


\begin{lem} \label{lconv3} Assume that hypothesis {\bf (N2)} is satisfied. Then
we have

\begin{equation} \label{conv3}
\sum\limits_{k=0}^{n-p}\frac{1}{2^{k}}\sum\limits_{C\in\{A;B\}^{k}}CI_{n-k}^{(2)}
C^{t}\superexpn 0,
\end{equation}
where $I_k^{(2)}$ is given in (\ref{I_n,2}).
\end{lem}
\begin{proof} This proof follows the same
lines as that of (\ref{conv2}) and uses hypothesis {\bf (N1)}
instead of {\bf (N2)}.
\end{proof}


\begin{lem} \label{lconv4} Assume that hypothesis {\bf (N2)} and {\bf (Xa)} are satisfied. Then
we have
\begin{equation}\label{conv4}
\sum\limits_{k=0}^{n-p}\frac{1}{2^{k}}\sum\limits_{C\in\{A;B\}^{k}}CI_{n-k}^{(1)}C^{t}
\superexpn \Lambda', \quad \text{where} \hspace{0.2cm}
\Lambda'=T-(\sigma^{2}+\overline{a^{2}})e_{1}e_{1}^{t},
\end{equation}
where $T$ is given (\ref{defT}) and $I_k^{(1)}$ is given in (\ref{I_n,1}).
\end{lem}

\begin{proof} Since in the definition of $I_n^{(1)}$ given by (\ref{I_n,1}),
  there are four terms, we will focus only on the first term
$$\frac{a_0}{2}A\frac{H_{k-1}}{2^{k}}e_1^t,$$
the other terms will be treated in the same way. Using (\ref{H_n}),
we obtain the following decomposition:

$$\frac{a_0}{2}\sum\limits_{k=0}^{n-p}\frac{1}{2^{k}}\sum\limits_{C\in\{A;B\}^{k}}CA
\frac{H_{n-k-1}}{2^{n-k}}e_{1}^{t}C^{t}=T^{(1)}_{n}+T^{(2)}_{n}+T^{(3)}_{n}$$
where
$$T^{(1)}_{n} = \frac{a_0}{2}\sum\limits_{k=0}^{n-p}\frac{1}{2^k}\sum\limits_{C\in\{A;B\}^{k}}
CA\left\{\overline{A}^{n-k-p}\frac{H_{p-1}}{2^p} +
\sum\limits_{l=p}^{n-k-1}\overline{A}^{n-k-l-1}\frac{H_{p-1}}{2^{l+1}}\right\}e_1^{t}C^{t},$$

$$T^{(2)}_{n}=\frac{a_0}{2}\sum\limits_{k=0}^{n-p} \frac{1}{2^{k}}\sum\limits_{C\in\{A;B\}^{k}}
CA\left\{\sum\limits_{l=p}^{n-k-1}
\overline{A}^{n-k-l-1}\overline{a}\left(
\frac{2^{l}-2^{p-1}}{2^{l}}\right)e_{1}e_{1}^{t}\right\}C^{t},$$ and
$$T^{(3)}_{n}=\frac{a_0}{2}\sum\limits_{k=0}^{n-p}\frac{1}{2^{k}}\sum\limits_{C\in\{A;B\}^{k}}
CA\sum\limits_{l=p}^{n-k-1}
\overline{A}^{n-k-l-1}\frac{P_{l}}{2^{l+1}}e_{1}e_{1}^{t}C^{t}.$$ On
the one hand we have
\[
\|T^{(3)}_{n}\|\leq
c\sum\limits_{k=p}^{n}\beta^{n-k}\frac{|P_{k}|}{2^{k+1}}
\]
where $c$ is a positive constant such that
$c>|a_{0}|\frac{1-\beta^{n-l}}{1-\beta}$ for all $n\geq l$, so that
\[
\pp\Big(\|T^{(3)}_{n}\|>\delta\Big) \leq
\sum\limits_{k=p}^{n}\pp\left(\frac{|P_{k}|}{|\ttt_{k}|+1}>\frac{2\delta}{cn\beta^{n-k}}\right).
\]
We deduce again from hypothesis {\bf (N1)} and in the same way we
have obtained (\ref{devineqeps2}) that
\[
\pp\left(\frac{P_{k}}{|\ttt_{k}|+1}>\frac{2\delta}{cn\beta^{n-k}}\right)\leq
\exp\left(-c_{1}\delta^{2}\frac{(2\beta^{2})^{k+1}}{n^{2}\beta^{2n}}\right)
\hspace{0.25cm} \forall k\geq p,
\]
for some positive constant $c_{1}.$ It then follows as in the proof
of (\ref{conv5}) that
\[
\pp\Big(\|T^{(3)}_{n}\|>\delta\Big)\leq\begin{cases}
\exp\left(-c_{1}\delta^{2}\frac{2^{n+1}}{n^{2}}\right)
\hspace{0.25cm} \text{if $\beta^{2}<\frac{1}{2}$}, \\
n\exp\left(-c_{1}\delta^{2}\frac{2^{n+1}}{n^{2}}\right)
\hspace{0.25cm} \text{if $\beta^{2}=\frac{1}{2}$}, \\
\exp\left(-c_{1}\delta^{2}\frac{1}{n^{2}\beta^{2n}}\right)
\hspace{0.25cm} \text{if $\beta^{2}>\frac{1}{2}$},\end{cases}
\]
so that
\begin{equation}\label{conv4_1}
T^{(3)}_{n}\superexpn 0.
\end{equation}
On the other hand, we have after studious calculations
\[
\|T^{(1)}_{n}\|\leq\begin{cases} c\frac{\overline{X}_{1}}{2^{n+1}}
\hspace{0.25cm} \text{if
$\beta<\frac{1}{2}$}, \\
c\frac{\overline{X}_{1}}{\sqrt{|\ttt_{n}|+1}} \hspace{0.25cm}
\text{if $\beta=\frac{1}{2}$}, \\ c\beta^{n}\overline{X}_{1}
\hspace{0.25cm} \text{if $\beta>\frac{1}{2}$},\end{cases}
\]
where $c$ is a positive constant which depends on $p$ and $|a_{0}|.$
Next, from hypothesis {\bf (X2)} and Chernoff inequality we
conclude that
\begin{equation}\label{conv4_2}
T^{(1)}_{n}\superexpn 0.
\end{equation}
Furthermore, since $(T^{(2)}_{n})$ is a deterministic sequence, we have
\begin{equation}\label{conv4_3}
T^{(2)}_{n} \superexpn \frac{1}{2}a_0A\Xi e_{1}^{t}.
\end{equation}
It then follows that
\[
\frac{a_0}{2}\sum\limits_{k=0}^{n-p}\frac{1}{2^{k}}\sum\limits_{C\in\{A;B\}^{k}}CA
\frac{H_{n-k-1}}{2^{n-k}}e_{1}^{t}C^{t} \superexpn
\frac{1}{2}a_{0}A\Xi e_{1}^{t}.
\]
Doing the same for the three other terms of $I_{k}^{(1)}$, we end the proof of Lemma (\ref{lconv4}).
\end{proof}

\begin{lem} \label{lconv5} Assume that hypothesis {\bf (N2)} and {\bf (Xa)} are satisfied. Then
we have

\begin{equation}\label{conv5}
\sum\limits_{k=0}^{n-p}\frac{1}{2^{k}}\sum\limits_{C\in\{A;B\}^{k}}C
\frac{U_{n-k}}{2^{n-k+1}}C^{t}\superexpn 0,
\end{equation}
where $U_k$ is given by (\ref{U_k}).
\end{lem}

\begin{proof} Let $\displaystyle V_{n}=\sum\limits_{k=2^{p-1}}^{n}\vep_{2k}X_{k}.$
Then $(V_{n})$ is a $\GG_{n}$-martingale and  its increasing process verifies
that
\begin{equation*}
\<V\>_{n}=\sigma^{2}\sum\limits_{k=2^{p-1}}^{n}X_{k}^{2} \leq
\sigma^{2}\sum\limits_{k=2^{p-1}}^{n}\|\xx_{k}\|^{2}\leq\sigma^{2}\sum\limits_{k\in\ttt_{r_{n},p-1}}\|\xx_{k}\|^{2}
\end{equation*}
From \cite{BSG09}, with $\alpha=\max(|a_0|,|b_0|)$, we have
\begin{equation}\label{decom_crochet_V}
\sum\limits_{k\in\ttt_{r_{n},p-1}}\|\xx_{k}\|^{2}\leq\frac{4}{1-\beta}P_{r_{n}}+\frac{4\alpha^{2}}{1-\beta}Q_{r_{n}}+2\overline{X}_{1}^{2}R_{r_{n}},\end{equation}
where
\[
P_{r_{n}}=\sum\limits_{k\in\ttt_{r_{n},p}}\sum\limits_{i=0}^{r_{k}-p}\beta^{i}\vep_{[\frac{k}{2^{i}}]}^{2},\quad
Q_{r_{n}}=\sum\limits_{k\in\ttt_{r_{n},p}}\sum\limits_{i=0}^{r_{k}-p}\beta^{i},\quad
R_{r_{n}}=\sum\limits_{k\in\ttt_{r_{n},p-1}}\beta^{2(r_{k}-p+1)}.
\]
For $\lambda>0$, we infer from hypothesis {\bf (N1)} that $(Y_{k})_{2^{p-1}\leq
k\leq n}$ given by
\[
Y_{n}=\exp\left(\lambda V_n-\frac{\lambda^{2}\phi}{2}
\sum\limits_{k=2^{p-1}}^{n}X_{k}^{2}\right),
\]
is a $\GG_{k}$-supermartingale and moreover $\ee\Big[Y_{2^{p-1}}\Big]\leq 1$.

For $B>0$ and $\delta>0$, we have
\begin{align}
\notag\pp\left(\frac{V_{n}}{2n}>\delta\right)&\leq
\pp\Big(\frac{\phi}{2n}\sum\limits_{i=2^{p-1}}^{n}X_{k}^{2}>B\ \Big)
+
\pp\left(Y_{n}>\exp\left(\lambda\delta-\frac{\lambda^{2}B}{2}\right)2n\right)\\
\notag &\leq
\pp\left(\frac{\phi}{2n}\sum\limits_{k=2^{p-1}}^{n}X_{k}^{2}>B\right)+
\exp\left(\left(-\lambda\delta+\frac{\lambda^{2}B}{2}\right)2n\right).
\end{align}
Optimizing  on $\lambda$ , we get
\begin{equation*}
\pp\left(\frac{V_{n}}{2n}>\delta\right)\leq
\pp\left(\frac{\phi}{2n}\sum\limits_{k\in\ttt_{r_{n},p-1}}\|\xx_{k}\|^{2}>B\right)+
\exp\left(-\frac{\delta^{2}}{B}2n\right).
\end{equation*}
Since the same thing works for $-V_{n}$ instead of $V_{n}$, using
$|\ttt_{n-1}|$ instead of $n$ in the previous inequality, we have particularly
\begin{equation}\label{V1}
\pp\left(\frac{\left|V_{|\ttt_{n-1}|}\right|}{|\ttt_{n}|+1}>\delta\right)\leq
\pp\left(\frac{\phi}{|\ttt_{n}|+1}\sum\limits_{k\in\ttt_{n-1,p-1}}\|\xx_{k}\|^{2}>B\right)+
\exp\left(-\frac{\delta^{2}}{B}2^{n+1}\right).
\end{equation}

Now, to control the first term in the right hand of the last inequality, we
will use the decomposition given by (\ref{decom_crochet_V}).
From the convergence of
$\frac{4\phi}{(1-\beta)(|\ttt_{n}|+1)} P_{n}$ and
$\frac{4\phi\alpha^{2}}{(1-\beta)(|\ttt_{n}|+1)} Q_{n}$ (see
\cite{BSG09} for more details) let $l_{1}$ and $l_{2}$ such that $\forall n\geq p-1$
\[
\frac{4\phi P_{n-1}}{(1-\beta)(|\ttt_{n}|+1)}\rightarrow l_{1}
\hspace{0.50cm} \text{and} \hspace{0.50cm}
\frac{4\phi \alpha^{2}Q_{n-1}}{(1-\beta)(|\ttt_{n}|+1)}<l_{2}.
\]
For $\delta>0$, we choose $B=\delta+l_{1}+l_{2},$ using
(\ref{decom_crochet_V}), we then have
\begin{eqnarray}\label{ineqexpoX2}
&\,&\pp\left(\frac{\phi}{|\ttt_{n}|+1}\sum\limits_{k\in\ttt_{n-1,p-1}}\|\xx_{k}\|^{2}>B\right)\notag\\
&\leq&
\pp\left(\frac{P_{n-1}}{|\ttt_{n}|+1}-l_{1}'>\delta_{1}\right)+\pp\left(\frac{Q_{n-1}}{|\ttt_{n}|+1}-l_{2}'>\delta_{2}\right)+\pp\left(\frac{R_{n-1}\overline{X}_{1}^{2}}{|\ttt_{n}|+1}>\delta_{3}\right)
\end{eqnarray}
where
\[
\delta_{1}=\frac{(1-\beta)\delta}{12\phi},\quad
l_{1}'=\frac{(1-\beta)l_{1}}{4\phi},\quad
\delta_{2}=\frac{(1-\beta)\delta}{12\alpha^{2}\phi},\quad
l_{2}'=\frac{(1-\beta)l_{2}}{4\alpha^{2}\phi},\hspace{0.20cm}
\text{and} \hspace{0.20cm} \delta_{3}=\frac{\delta}{6\phi}.
\]
First, by the choice of $l_{2}$, we have
\begin{equation}\label{V2}
\pp\left(\frac{Q_{n-1}}{|\ttt_{n}|+1}-l_{2}'>\delta_{2}\right)=0.
\end{equation}
Next, from Chernoff inequality and hypothesis {\bf (X2)} we get
easily
\begin{equation}\label{V3}
\pp\left(\frac{R_{n-1}\overline{X}_{1}^{2}}{|\ttt_{n}|+1}>\delta_{3}\right)\leq\begin{cases}
c_{1}\exp\Big(-c_{2}\delta 2^{n+1}\Big) \hspace{0.25cm} \text{if
$\beta<\frac{\sqrt{2}}{2}$}\\ \\
c_{1}\exp\left(-c_{2}\delta\frac{2^{n+1}}{n+1}\right)
\hspace{0.25cm} \text{if $\beta=\frac{\sqrt{2}}{2}$} \\ \\
c_{1}\exp\left(-c_{2}\delta\left(\frac{1}{\beta^{2}}\right)^{n+1}\right)
\hspace{0.25cm} \text{if $\beta>\frac{\sqrt{2}}{2}$},\end{cases}
\end{equation}
for some positive constants $c_{1}$ and $c_{2}$. Let us now control
the first term of the right hand side of (\ref{ineqexpoX2}).
\medskip

{\bf First case.} If $\beta=\frac{1}{2}$, from \cite{BSG09}
\[
P_{n-1}=\sum\limits_{k=p}^{n-1}(n-k)\sum\limits_{i\in\g_{k}}\vep_{i}^{2}
\hspace{0.25cm} \text{and} \hspace{0.25cm} l_{1}'=\sigma^{2}.
\]
We thus have
\[
\frac{P_{n-1}}{|\ttt_{n}|+1}-\sigma^{2}=\frac{1}{|\ttt_{n}|+1}
\sum\limits_{k=p}^{n-1}(n-k)\sum\limits_{i\in\g_{k}}(\vep_{i}^{2}-\sigma^{2})
+\sigma^{2}\left(\sum\limits_{k=p}^{n-1}\frac{n-k}{2^{n+1-k}}
-1\right).
\]
In addition, we also have
\[
\sigma^{2}\left(\sum\limits_{k=p}^{n-1}\frac{n-k}{2^{n+1-k}}
-1\right)\leq 0. \]
We thus deduce that
\[
\pp\left(\frac{P_{n-1}}{|\ttt_{n}|+1}-l_{1}'>\delta_{1}\right) \leq
\pp\left(\frac{1}{|\ttt_{n}|+1}
\sum\limits_{k=p}^{n-1}(n-k)\sum\limits_{i\in\g_{k}}(\vep_{i}^{2} -
\sigma^{2})>\delta_{1}\right).
\]
On the one hand we have
\begin{align}
\notag\pp\left(\frac{1}{|\ttt_{n}|+1} \sum\limits_{k=p}^{n-1}(n-k)
\sum\limits_{i\in\g_{k}}(\vep_{i}^{2}-\sigma^{2})>\delta_{1}\right)
\hspace{7cm}\left.\right.\\ \leq
\sum_{\eta=0}^1\pp\left(\frac{1}{|\ttt_{n}|+1} \sum\limits_{k=p-1}^{n-2}(n-k-1)
\sum\limits_{i\in\g_{k}}(\vep_{2i+\eta}^{2}-\sigma^{2})>\delta_{1}/2\right).\label{negterm}
\end{align}
On the other hand, for all $\lambda>0$, an application of Chernoff
inequality yields
\begin{eqnarray*}
&\,&\pp\left(\frac{1}{|\ttt_{n}|+1}
\sum\limits_{k=p-1}^{n-2}(n-k-1)
\sum\limits_{i\in\g_{k}}(\vep_{2i}^{2}-\sigma^{2})>
\delta_{1}/2\right)\\
&\leq& \exp\left(\frac{-\delta_{1}\lambda2^{n+1}}{2}\right) \times
\ee\left[\exp\left(\lambda\sum\limits_{k=p-1}^{n-2}(n-k-1)
\sum\limits_{i\in\g_{k}}(\vep_{2i}^{2} - \sigma^{2})\right)\right].
\end{eqnarray*}
From hypothesis {\bf (N2)} we get
\begin{eqnarray*}
&\,&\ee\left[\exp\left(\lambda\sum\limits_{k=p-1}^{n-2}(n-k-1)
\sum\limits_{i\in\g_{k}}(\vep_{2i}^{2}-\sigma^{2})\right)\right]\\
&=&\ee\left[\ee\left[\exp\left(\lambda\sum\limits_{k=p-1}^{n-2}(n-k-1)
\sum\limits_{i\in\g_{k}}(\vep_{2i}^{2} - \sigma^{2})\right)\Big/
\FF_{n}\right]\right]\\
&=&
\ee\left[\exp\left(\lambda\sum\limits_{k=p-1}^{n-3}(n-k-1)
\sum\limits_{i\in\g_{k}}(\vep_{2i}^{2}-\sigma^{2})\right)
\prod\limits_{i\in\g_{n-2}}\ee\left[\exp\left(\lambda(\vep_{2i}^{2} - \sigma^{2})\right)\Big/\FF_{n}\right]\right]\\
&\leq& \exp\left(\lambda^{2}\gamma|\g_{n-2}|\right)
\ee\left[\exp\left(\lambda \sum\limits_{k=p-1}^{n-3}(n-k-1)
\sum\limits_{i\in\g_{k}}(\vep_{2i}^{2}-\sigma^{2})\right)\right].
\end{eqnarray*}
Iterating this procedure, we obtain
\begin{eqnarray*}
\ee\left[\exp\left(\lambda \sum\limits_{k=p-1}^{n-2}(n-k-1)
\sum\limits_{i\in\g_{k}}(\vep_{2i}^{2}-\sigma^{2})\right)\right]
&\leq& \exp\left(\gamma\lambda^{2}
\sum\limits_{k=2}^{n-p+1}k^{2}|\g_{n-k}|\right)
\\ &\leq& \exp\left(c\gamma\lambda^{2}2^{n+1}\right),
\end{eqnarray*}
where $c=\sum\limits_{k=1}^{\infty}\frac{k^{2}}{2^{k+2}}.$
Optimizing on $\lambda$, we are led, for some positive constant
$c_{1}$ to
\[
\pp\left(\frac{1}{|\ttt_{n}|+1} \sum\limits_{k=p-1}^{n-2}(n-k-1)
\sum\limits_{i\in\g_{k}}(\vep_{2i}^{2}-\sigma^{2})>
\delta_{1}/2\right) \leq
\exp\left(-c_{1}\delta^{2}|\ttt_{n}|\right).
\]
Following the same lines, we obtain the same inequality for the second term in (\ref{negterm}).
It then follows that
\begin{equation}\label{V4}
\pp\left(\frac{P_{n-1}}{|\ttt_{n}|+1}-l_{1}'>\delta_{1}\right)\leq
c_{1}\exp\left(-c_{2}\delta^{2}|\ttt_{n}|\right),
\end{equation}
for some positive constants $c_{1}$ and $c_{2}.$
\medskip

{\bf Second case.} If $\beta\neq \frac{1}{2},$ then from \cite{BSG09}, we have
$l_{1}'=\frac{\sigma^{2}}{2(1-\beta)}.$ Since
\[
\sigma^{2}\left(\sum\limits_{k=p}^{n-1}\frac{1-(2\beta)^{n-k}}{(1-2\beta)2^{n-k+1}}\right)\leq
\frac{\sigma^{2}}{2(1-\beta)},
\]
we deduce that
\[
\pp\left(\frac{P_{n-1}}{|\ttt_{n}|+1}-l_{1}'>\delta_{1}\right)\leq
\pp\left(\frac{1}{|\ttt_{n}|+1}
\sum\limits_{k=p}^{n-1}\frac{1-(2\beta)^{n-k}}{1-2\beta}
\sum\limits_{i\in\g_{k}}(\vep_{i}^{2}-\sigma^{2})>\delta_{1}\right).
\]

\begin{itemize}
\item If $\beta<\frac{1}{2},$ then for some positive constant $c$ we
have
\[
\pp\left(\frac{P_{n-1}}{|\ttt_{n}|+1}-l_{1}'>\delta_{1}\right)\leq
\pp\left(\frac{1}{|\ttt_{n}|+1}\sum\limits_{k=p}^{n-1}
\sum\limits_{i\in\g_{k}}(\vep_{i}^{2}-\sigma^{2})>c\delta_{1}\right).
\]
Performing now as in the proof of (\ref{conv2}), we get
\begin{equation}\label{V5}
\pp\left(\frac{P_{n-1}}{|\ttt_{n}|+1}-l_{1}'>\delta_{1}\right)\leq
c_{1}\exp\left(-c_{2}\delta^{2}|\ttt_{n}|\right),
\end{equation}
for some positive constants $c_{1}$ and $c_{2}$.

\item If $\beta>\frac{1}{2}$, then for some positive constant $c$,
we have
\[
\pp\left(\frac{P_{n-1}}{|\ttt_{n}|+1}-l_{1}'>\delta_{1}\right)\leq
\pp\left(\frac{1}{|\ttt_{n}|+1}\sum\limits_{k=p}^{n-1}(2\beta)^{n-k}
\sum\limits_{i\in\g_{k}}(\vep_{i}^{2}-\sigma^{2})>c\delta_{1}\right).
\]
Now, from Chernoff inequality, hypothesis {\bf (N2)} and after
several successive conditioning, we get for all $\lambda>0$
\begin{eqnarray*}
&\,&\pp\left(\frac{1}{|\ttt_{n}|+1}
\sum\limits_{k=p}^{n-1}(2\beta)^{n-k}
\sum\limits_{i\in\g_{k}}(\vep_{i}^{2} - \sigma^{2})>c\delta_{1}\right)\\
&\leq& \exp\Big(-c\delta_{1}\lambda2^{n+1}\Big)
\exp\left(\gamma\lambda^{2}2^{n+1}\sum\limits_{k=2}^{n-p+1}(2\beta^{2})^{k}\right).
\end{eqnarray*}

Next, optimizing over $\lambda$, we are led, for some positive
constant $c$ to
\begin{equation}\label{V6}
\pp\left(\frac{P_{n-1}}{|\ttt_{n}|+1}-l_{1}'>\delta_{1}\right) \leq
\begin{cases} \exp\Big(-c\delta^{2}|\ttt_{n}|\Big) \hspace{0.25cm}
\text{if
$\frac{1}{2}<\beta<\frac{\sqrt{2}}{2}$}, \\ \\
\exp\Big(-c\delta^{2}\frac{|\ttt_{n}|}{n}\Big) \hspace{0.25cm}
\text{if $\beta=\frac{\sqrt{2}}{2}$}, \\ \\
\exp\left(-c\delta^{2}\left(\frac{1}{\beta^{2}}\right)^{n+1}\right)
\hspace{0.25cm} \text{if $\beta>\frac{\sqrt{2}}{2}$}.\end{cases}
\end{equation}
\end{itemize}

Now combining (\ref{V1}), (\ref{ineqexpoX2}), (\ref{V2}), (\ref{V3}),
(\ref{V4}),(\ref{V5}) and (\ref{V6}), we have thus showed that

\begin{equation}\label{ineqexpoU_n}
\begin{array}{ll}
\pp\left(\frac{1}{|\ttt_{n}|+1}\left|V_{|\ttt_{n-1}|}\right|>\delta\right)\\ \\ \leq\begin{cases}
c_{1}\exp\left(-c_{2}\delta^{2}2^{n+1}\right)+c_{1}\exp\left(-c_{2}\delta 2^{n+1}\right)+
\exp\left(\frac{-\delta^{2}}{\delta+l_{1}+l_{2}}2^{n+1}\right)
\hspace{0.10cm} \text{if $\beta<\frac{\sqrt{2}}{2}$} ,\\ \\
c_{1}\exp\left(-c_{2}\delta^2\frac{2^{n+1}}{n+1}\right) +
c_{1}\exp\left(-c_{2}\delta\frac{2^{n+1}}{n+1}\right) +
\exp\left(\frac{-\delta^{2}}{\delta+l_{1}+l_{2}}2^{n+1}\right)
\hspace{0.10cm} \text{if $\beta=\frac{\sqrt{2}}{2}$}, \\ \\
c_{1}\exp\left(-c_{2}\delta^{2}\left(\frac{1}{\beta^{2}}\right)^{n+1}\right)
+ c_{1}
\exp\left(-c_{2}\delta\left(\frac{1}{\beta^{2}}\right)^{n+1}\right)
+ \exp\left(\frac{-\delta^{2}}{\delta+l_{1}+l_{2}}2^{n+1}\right)
\hspace{0.10cm} \text{if $\beta>\frac{\sqrt{2}}{2}$},
\end{cases}
\end{array}
\end{equation}
where the positive constants $c_{1}$ and $c_{2}$ may differ term by
term.

One can easily check that the coefficients of the matrix $U_{n}$ are
linear combinations of terms similar to $V_{|\ttt_{n-1}|}$, so that
performing to similar calculations as before for each of them, we
deduce the same deviation inequalities for $U_{n}$ as in (\ref{ineqexpoU_n}).

Now we have
\begin{align}
\notag
\pp\left(\sum\limits_{k=0}^{n-p}\frac{1}{2^{k}}\left\|\sum\limits_{C\in\{A;B\}^{k}}C
\frac{U_{n-k}}{2^{n-k+1}}C^{t}\right\|>\delta\right) & \leq
\pp\left(\sum\limits_{k=0}^{n-p}\frac{1}{2^{k}}\sum\limits_{C\in\{A;B\}^{k}}
\frac{1}{2^{n-k+1}}\left\|CU_{n-k}C^{t}\right\|> \delta\right) \\
\notag & \leq
\pp\left(\sum\limits_{k=p}^{n}\beta^{2(n-k)}\frac{1}{|\ttt_{k}|+1}\|U_{k}\|>\delta\right)\\
\notag & \leq
\sum\limits_{k=p}^{n}\pp\left(\frac{\|U_{k}\|}{|\ttt_{k}|+1}>\frac{\delta}{(n-p+1)\beta^{2(n-k)}}\right).
\end{align}

From (\ref{ineqexpoU_n}), we infer the following
\begin{align}
\notag
\pp\left(\sum\limits_{k=0}^{n-p}\frac{1}{2^{k}}\left\|\sum\limits_{C\in\{A;B\}^{k}}C
\frac{U_{n-k}}{2^{n-k+1}}C^{t}\right\|>\delta\right) \hspace{8cm} \left.\right. \\
\notag \leq
\begin{cases}
c_{1}\sum\limits_{k=p}^{n}\exp\left(-c_{2}\frac{\delta^{2}(2\beta^{4})^{k+1}}{n^{2}\beta^{4n}}\right)
+
c_{1}\sum\limits_{k=p}^{n}\exp\left(-c_{2}\frac{\delta(2\beta^{2})^{k+1}}{n\beta^{2n}}\right)
\\ \notag \hspace{4cm} \left.\right. +
c_{1}\sum\limits_{k=p}^{n}\exp\left(-c_{2}\frac{\delta^{2}2^{k+1}}{(\delta+nl\beta^{2(n-k-1)})n\beta^{2(n-k-1)}}\right)
\hspace{0.25cm} \text{if $\beta<\frac{\sqrt{2}}{2}$} , \\ \\ \notag
c_{1}\sum\limits_{k=p}^{n}\exp\left(-c_{2}\frac{\delta^{2}4^{n}}{n^{2}(k+1)2^{k+1}}\right)
+
c_{1}\sum\limits_{k=p}^{n}\exp\left(-c_{2}\frac{\delta2^{n}}{(k+1)n}\right)
\\ \notag \hspace{4.5cm} +
c_{1}\sum\limits_{k=p}^{n}\exp\left(-c_{2}\frac{\delta^{2}2^{k+1}}{(\delta+nl2^{-(n-k-1)})n2^{-(n-k-1)}}\right)
\hspace{0.25cm} \text{if $\beta=\frac{\sqrt{2}}{2}$}, \\ \\ \notag
c_{1}\sum\limits_{k=p}^{n}\exp\left(-c_{2}\frac{\delta^{2}(2\beta^{2})^{k+1}}{n^{2}\beta^{4n}}\right)
+
c_{1}\sum\limits_{k=p}^{n}\exp\left(-c_{2}\frac{\delta}{n\beta^{2n}}\right)
\\ \notag \hspace{4.3cm} +
c_{1}\sum\limits_{k=p}^{n}\exp\left(-c_{2}\frac{\delta^{2}2^{k+1}}{(\delta+nl\beta^{2(n-k-1)})n\beta^{2(n-k-1)}}\right)
\hspace{0.25cm} \text{if $\beta>\frac{\sqrt{2}}{2}$}, \end{cases}
\end{align}
where $l=l_{1}+l_{2}$ and the positive constants $c_{1}$ and $c_{2}$
may differ term by term.

Now
\begin{itemize}
\item If $\beta<\frac{\sqrt{2}}{2}$, then on the one hand,
\begin{align}
\notag\sum\limits_{k=p}^{n}\exp\left(-c\frac{\delta^{2}(2\beta^{4})^{k+1}}{n^{2}\beta^{4n}}\right)
\hspace{10cm} \left.\right.
\\ \notag =\exp\left(-c\delta^{2}\beta^{4}\frac{2^{n+1}}{n^{2}}\right)\left(1+\sum\limits_{k=p}^{n-1}
\left(\exp\left(\frac{-c\delta^{2}}{n^{2}}\right)\right)^{(2\beta^{4})^{k+1}\beta^{-4n}(1-(2\beta^{4})^{n-k})}\right)
\\ \notag \leq
\exp\left(-c\delta^{2}\beta^{4}\frac{2^{n+1}}{n^{2}}\right)\Big(1+o(1)\Big),
\hspace{6.5cm} \left.\right.
\end{align}
where the last inequality follows from the fact that for some
positive constant $c_{1}$,
\[
(2\beta^{4})^{k+1}\beta^{-4n}(1-(2\beta^{4})^{n-k})\propto
c_{1}(2\beta^{4})^{k+1}\beta^{-4n}.
\]
On the other hand, following the same lines as before, we obtain
\begin{eqnarray*}
\sum\limits_{k=p}^{n}\exp\left(-\frac{\delta^{2}2^{k+1}}{(\delta+ln\beta^{2(n-k-1)})n\beta^{2(n-k-1)}}\right)
&\leq&
\sum\limits_{k=p}^{n}\exp\left(-c\delta^{2}\frac{2^{k+1}}{n^{2}\beta^{2(n-k-1)}}\right)
\\ &\leq& \exp\left(-c\frac{\delta^{2}2^{n+1}}{(\delta + l)n^{2}}\right)\Big(1+o(1)\Big),
\end{eqnarray*}
and
\begin{eqnarray*}\sum\limits_{k=p}^{n}\exp\left(-c\frac{\delta(2\beta^{2})^{k+1}}{n\beta^{2n}}\right)
&\leq&
\sum\limits_{k=p}^{n}\exp\left(-c\frac{\delta(2\beta^{2})^{k+1}}{n^{2}\beta^{2n}}\right)\\
& \leq&
\exp\left(-c\delta\frac{2^{n+1}}{n^{2}}\right)\Big(1+o(1)\Big).
\end{eqnarray*}

We thus deduce that
\begin{equation}\label{U1}\pp\left(\sum\limits_{k=0}^{n-p}\frac{1}{2^{k}}\left\|\sum\limits_{C\in\{A;B\}^{k}}C
\frac{U_{n-k}}{2^{n-k+1}}C^{t}\right\|>\delta\right) \leq
c_{1}\exp\left(-c_{2}\delta^{2}\frac{2^{n+1}}{n^{2}}\right) +
c_{1}\exp\left(-c_{2}\delta\frac{2^{n+1}}{n^{2}}\right),
\end{equation}
for some positive constants $c_{1}$ and $c_{2}$.

\item If $\beta=\frac{\sqrt{2}}{2}$, then following the same
lines as before, we show that
\begin{align}
\notag\sum\limits_{k=p}^{n}\exp\left(-c\delta^{2}\frac{4^{n}}{n^{2}(k+1)2^{k+1}}\right)\leq
\exp\left(-c\delta^{2}\frac{2^{n+1}}{n^{3}}\right)\Big(1+o(1)\Big),
\\ \notag
\sum\limits_{k=p}^{n}\exp\left(-\frac{\delta^{2}2^{k+1}}{(\delta+ln2^{-(n-k-1)})n2^{-(n-k-1)}}\right)
\leq \exp\left(-c\frac{\delta^{2}2^{n+1}}{n^{2}(\delta + l)}\right)\Big(1+o(1)\Big), \\
\notag
\sum\limits_{k=p}^{n}\exp\left(-c\delta\frac{2^{n}}{n(k+1)}\right)\leq
\exp\left(-c\delta\frac{2^{n+1}}{n^{3}}\right)\Big(1+o(1)\Big).
\end{align}
It then follows that
\begin{eqnarray}\label{U2}
&\,&\pp\left(\sum\limits_{k=0}^{n-p}\frac{1}{2^{k}}\left\|\sum\limits_{C\in\{A;B\}^{k}}C
\frac{U_{n-k}}{2^{n-k+1}}C^{t}\right\|>\delta\right) \\
&\leq& c_{1}\exp\left(-c_{2}\delta^{2}\frac{2^{n+1}}{n^{3}}\right) +
c_{1}\exp\left(-c_{2}\frac{\delta^{2}2^{n+1}}{n^{2}(\delta +
l)}\right)+ c_{1}\exp\left(-c_{2}\delta\frac{2^{n+1}}{n^{3}}\right),
\end{eqnarray}
for some positive constants $c_{1}$ and $c_{2}$.

\item If $\beta>\frac{\sqrt{2}}{2},$ once again following the
previous lines, we get
\begin{eqnarray}\label{U3}
&\,&\pp\left(\sum\limits_{k=0}^{n-p}\frac{1}{2^{k}}\left\|\sum\limits_{C\in\{A;B\}^{k}}C
\frac{U_{n-k}}{2^{n-k+1}}C^{t}\right\|>\delta\right)\notag\\
 &\leq&
c_{1}\exp\left(-c_{2}\delta^{2}\frac{1}{n^{2}\beta^{2n}}\right) +
c_{1}\exp\left(-c_{2}\frac{\delta^{2}}{(\delta +
l)n^{2}\beta^{2n}}\right)+ c_{1}n\exp\left(-c_{2}\frac{\delta}
{n^{2}\beta^{2n}}\right)
\end{eqnarray}
for some positive constants $c_{1}$ and $c_{2}$.
\end{itemize}

We infer from the inequalities (\ref{U1}), (\ref{U2})and  (\ref{U3}) that
\[
\sum\limits_{k=0}^{n-p}\frac{1}{2^{k}}\sum\limits_{C\in\{A;B\}^{k}}C
\frac{U_{n-k}}{2^{n-k+1}}C^{t} \superexpn 0.
\]
\end{proof}
This achieves the proof of the Proposition \ref{convexpocrochet2}.
\end{proof}
We now, explain the modification in the last proofs in the case 1.
\begin{prop}\label{convexpocrochet3}
Within the framework 1, we have the same conclusions as the
Proposition \ref{convexpocrochet1} and \ref{convexpocrochet2} with
the sequence $(b_{n})$ which satisfies condition {\bf (V1)}.
\end{prop}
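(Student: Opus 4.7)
The plan is to recycle the entire proof architecture of Propositions \ref{convexpocrochet1} and \ref{convexpocrochet2}: the algebraic decomposition (\ref{H_n}), the splitting of $T_k$, and the division into Lemmas \ref{lconv1}--\ref{lconv5} all survive unchanged because they depend only on the BAR($p$) model (\ref{bar_p2}). What must be rebuilt are the tail estimates, which in case~2 relied on the conditional sub-Gaussian bounds (N1)--(N2); in case~1 they are replaced by their i.i.d.\ analogues obtained from (G1), (G2) and the weaker initial-state bound (X2).

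For Lemma \ref{lconv1} the argument passes through verbatim, using (X2) in place of (Xa). For Lemmas \ref{lconv2} and \ref{lconv3} I would factorise the Laplace transform directly by independence (instead of conditioning generation-by-generation) and apply (G2), respectively (G1), to each factor. The essential new feature is that (G2) is valid only for $|t|\le c$, so the Chernoff optimisation must be carried out on the restricted interval $|\lambda|\le c$; this yields a Bennett--Bernstein type bound
\[
\pp\Big(\Big|\frac{1}{|\ttt_k|+1}\sum_{i\in\ttt_{k,p}}(\vep_i^2-\sigma^2)\Big|>\delta\Big)\le c_1\exp\Big(-c_2\frac{\delta^2}{c_3+\delta}\,|\ttt_k|\Big)
\]
in place of the purely sub-Gaussian estimate $c_1\exp(-c_2\delta^2|\ttt_k|)$ used in case~2. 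Lemma \ref{lconv4}'s decomposition into $T^{(1)}_n,T^{(2)}_n,T^{(3)}_n$ then only needs (X2) for $T^{(1)}_n$, is deterministic for $T^{(2)}_n$, and invokes the updated Lemma \ref{lconv3} for $T^{(3)}_n$. For Lemma \ref{lconv5} the exponential supermartingale $Y_n$ built on $V_n=\sum\vep_{2k}X_k$ is still well defined under (G1), and (\ref{decom_crochet_V}) combined with the new scalar bounds on $P_n$, $Q_n$, $R_n$ produces the analogue of (\ref{ineqexpoU_n}) with an additional linear-in-$\delta$ contribution coming from the Bennett--Bernstein step.

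The main point of care will be to propagate this extra linear-in-$\delta$ term through the geometric sum $\sum_k \beta^{2(n-k)}\|U_k\|/|\ttt_k|$: at each generation the tail bound now has a $\min(\delta^2,\delta)$ structure, so the critical geometric ratio that must be summable is $(2\beta)^k$ rather than $(2\beta^2)^k$. This is exactly the reason why (V1) locates its phase transitions at $\beta=\tfrac12$ (instead of $\beta^2=\tfrac12$) and requires the stronger corrector $\beta^{(r_n+1)/2}$ rather than $\beta^{r_n+1}$ in the super-critical regime. Once one checks that (V1) exactly compensates this degradation in all three regimes $\beta<\tfrac12$, $\beta=\tfrac12$, $\beta>\tfrac12$, the superexponential convergence with speed $b^2_{|\ttt_n|}$ follows from the same combination of Lemmas \ref{lconv1}--\ref{lconv5} as in case~2.
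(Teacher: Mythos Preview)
Your proposal is correct and follows essentially the same route as the paper. Both arguments recycle the algebraic decomposition from case~2, identify the restricted validity range of {\bf (G2)} as the only structural change, derive a Bennett--Bernstein (sub-Gaussian/sub-exponential) tail in place of the pure sub-Gaussian bound (\ref{devineqeps2}), and check that the resulting extra linear-in-$\delta$ contribution is exactly what {\bf (V1)} (with its phase transition at $\beta=\tfrac12$) is designed to absorb.

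Two small points of comparison. First, the paper economises by observing that any superexponential convergence already established under {\bf (V2)} automatically holds under the more restrictive {\bf (V1)}; this disposes of Lemmas \ref{lconv1}, \ref{lconv3}, the pieces $T^{(1)}_n$, $T^{(2)}_n$ of Lemma \ref{lconv4}, and the first convergence in (\ref{conv_sum_H_p1}) in one stroke, so only the $L_k$, $T^{(3)}_n$ and $U_k$ estimates genuinely need rework. Your plan to redo everything is of course also valid, just longer. Second, where you write the Bennett--Bernstein bound as a single expression $\exp(-c_2\delta^2/(c_3+\delta)\,|\ttt_k|)$, the paper instead states it as the dichotomy (\ref{devineqeps3}) and then, when feeding $\delta/((n-p+1)\beta^{2(n-k)})$ into it, splits the sum over $k$ at a threshold $n_0$ separating the two regimes; the two presentations are equivalent. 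Your heuristic that the critical ratio shifts from $(2\beta^2)^k$ to $(2\beta)^k$ captures the right mechanism (the constraint $|\lambda(2\beta)^{n-k}|\le c$ from {\bf (G2)} becomes binding precisely when $\beta>\tfrac12$), even if the detailed bookkeeping involves several distinct geometric ratios.
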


\begin{proof}
The proof follows exactly the same lines as the proof of Proposition
\ref{convexpocrochet1} and \ref{convexpocrochet2}, and uses the fact
that if a superexponential convergence holds with a sequence
$(b_{n})$ which satisfies condition {\bf (V2)}, then it also holds
with a sequence $(b_{n})$ which satisfies condition {\bf (V1)}. We
thus obtain the first convergence of (\ref{conv_sum_H_p1}), the
convergences (\ref{conv1}), (\ref{conv4_2}), (\ref{conv4_3}) and
(\ref{conv3}) within the framework 1 with $(b_{n})$ which satisfies
condition {\bf (V1)}. Next, following the same approach as which
used to obtain (\ref{devineqeps2}), we get
\begin{equation}\label{devineqeps3}
\pp\left(\frac{1}{|\ttt_{k}|+1} \left|\sum\limits_{i\in\ttt_{k,p}}
(\vep_{i}^{2}-\sigma^{2})\right| >\delta\right) \leq \begin{cases}
c_{1} \exp\left(-c_{2}\delta^{2}|\ttt_{k}|\right) \hspace{0.25cm}
\text{if $\delta$ is small enough} \\ c_{1}
\exp\left(-c_{2}\delta|\ttt_{k}|\right) \hspace{0.25cm} \text{if
$\delta$ is large enough}, \end{cases}
\end{equation}
where $c_{1}$ and $c_{2}$ are positive constants which do not depend
on $\delta.$ The first inequality holds for example if
$\delta/\gamma < \varepsilon$ and the second holds for example if
$\delta/\gamma > \varepsilon.$ On the other hand, for $n$ large
enough, let $n_{0}$ such that for all $k < n_{0},$ $n\beta^{2(n-k)}$
is small enough so that $\delta/(n-p+1)\gamma\beta^{2(n-k)} >
\veps.$ We have
\begin{align*}
\pp\left(\left\|\sum\limits_{k=0}^{n-p} \frac{1}{2^{k}}
\sum\limits_{C\in\{A;B\}^{k}} C\frac{L_{n-k}-\sigma^{2}}{2^{n-k}}
e_{1}e_{1}^{t}C^{t}\right\|>\delta\right) \hspace{7cm} \left.\right.
\\ \leq \sum\limits_{k=p}^{n_{0}-1}
\pp\left(\frac{|L_{k}-\sigma^{2}|}{|\ttt_{k}|+1}
> \frac{\delta}{(n-p+1)\beta^{2(n-k)}}\right) + \sum\limits_{k=n_{0}}^{n}
\pp\left(\frac{|L_{k}-\sigma^{2}|}{|\ttt_{k}|+1}
> \frac{\delta}{(n-p+1)\beta^{2(n-k)}}\right).
\end{align*}
Now, using (\ref{devineqeps3}) with $\delta/(n-p+1)\beta^{2(n-k)}$
instead of $\delta$ and following the same approach used to obtain
(\ref{U1})-(\ref{U3}) in the two sums of the right hand side of the
above inequality, we are led to
\begin{align*}
\pp\left(\left\|\sum\limits_{k=0}^{n-p} \frac{1}{2^{k}}
\sum\limits_{C\in\{A;B\}^{k}} C\frac{L_{n-k}-\sigma^{2}}{2^{n-k}}
e_{1}e_{1}^{t}C^{t}\right\|>\delta\right) \hspace{7cm} \left.\right.
\\ \leq \begin{cases} c_{1} \exp\left(-
\frac{c_{2}\delta^{2}2^{n+1}}{n^{2}}\right) + c_{1} \exp\left(-
\frac{c_{2}\delta 2^{n+1}}{n}\right) \quad \text{if $\beta\leq
\frac{1}{2}$} \\ c_{1}n\exp\left(-
\frac{c_{2}\delta^{2}}{n^{2}\beta^{4n}}\right) + c_{1} \exp\left(-
\frac{c_{2}\delta}{n\beta^{2n}}\right) \quad \text{if
$\beta>\frac{1}{2}$}, \end{cases}
\end{align*}
and we thus obtain convergence (\ref{conv2}) with $(b_{n})$ which
satisfies condition {\bf (V1)}. In the same way we obtain
\[
\pp\left(\|T_{n}^{(3)}\| > \delta\right) \leq \begin{cases} c_{1}
\exp\left(- \frac{c_{2}\delta^{2}2^{n+1}}{n^{2}}\right) + c_{1}
\exp\left(- \frac{c_{2}\delta2^{n+1}}{n}\right) \quad \text{if
$\beta<\frac{1}{2}$}, \\ c_{1}n
\exp\left(-\frac{c_{2}\delta2^{n+1}}{n}\right) \hspace{4cm} \text{if
$\beta=\frac{1}{2}$}, \\ c_{1}
\exp\left(-\frac{c_{2}\delta^{2}}{n^{2}\beta^{2n}}\right) + c_{1}
\exp\left(-\frac{c_{2}\delta}{n\beta^{n}}\right) \quad \text{if
$\beta>\frac{1}{2}$}, \end{cases}
\]
so that (\ref{conv4_1}) and then (\ref{conv4}) hold for $(b_{n})$
which satisfies condition {\bf (V1)}. To reach the convergence
(\ref{conv5}) and the second convergence of (\ref{conv_sum_H_p1})
with $(b_{n})$ which satisfies condition {\bf (V1)}, we follow the
same procedure as before and the proof of proposition is then
complete.
\end{proof}
\begin{rem}\label{convexpocrochet_incomplet}
Let us note that we can actually prove that
\[
\frac{1}{n}\sum\limits_{k=2^{p}}^{n}\xx_{k} \superexpnn
L_{1,2}\qquad \text{and} \qquad
\frac{1}{n}\sum\limits_{k=2^{p}}^{n}\xx_{k}\xx_{k}^{t} \superexpnn
L_{2,2}.
\]
Indeed, let $\displaystyle H_n=\sum_{k=2^{p-1}}^{n}\xx_k\quad{\rm and}\quad
P_l^{(n)}=\sum_{k=2^{r_{n}-l}}^{[\frac{n}{2^{l}}]}\vep_k.
$
We have the following decomposition
\[
\frac{H_{n}}{n} = \frac{1}{n}\sum\limits_{k\in
\ttt_{r_{n}-1,p-1}}\xx_{k} + \frac{1}{n}
\sum\limits_{k=2^{r_{n}}}^{n}\xx_{k}.
\]
On the one hand, from Proposition \ref{convexpocrochet1}, we infer that
\[
\frac{1}{n}\sum\limits_{k\in \ttt_{r_{n}-1,p-1}}\xx_{k} \superexpnn
cL_{1,2},
\]
where $c = \underset{n\rightarrow\infty}\lim\frac{2^{r_{n}}-1}{n}.$

On the other hand, from (\ref{bar_p2}) we deduce that
\begin{align}
\notag\sum\limits_{k=2^{r_{n}}}^{n}\xx_{k} =
2^{r_{n}-p+1}\left(\overline{A}\right)^{r_{n}-p+1}
\sum\limits_{k=2^{p-1}}^{[\frac{n}{2^{r_{n}}-p+1}]} \xx_{k} +
2\overline{a}\sum\limits_{k=0}^{r_{n}-p}\left([\frac{n}{2^{k}}]  -
2^{r_{n}-k} + 1\right)2^{k}\left(\overline{A}\right)^{k}e_{1} \\
\notag +
\sum\limits_{k=0}^{r_{n}-p}2^{k}\left(\overline{A}\right)^{k}P_{k}^{(n)}e_{1}
-
\sum\limits_{k=1}^{r_{n}-p+1}s_{k}2^{k-1}\left(\overline{A}\right)^{k-1}\left(B\xx_{[\frac{n}{2^{k}}]}
+ \eta_{[\frac{n}{2^{k-1}}]+1}\right),
\end{align}
where
\[
s_{k}=\begin{cases} 1 \hspace{0.25cm} \text{if $[\frac{n}{2^{k-1}}]$ is
even} \\ 0 \hspace{0.25cm} \text{if $[\frac{n}{2^{k-1}}]$ is
old}.\end{cases}
\]
Performing now as in the proof of Proposition
\ref{convexpocrochet1}, tedious but straightforward calculations
lead us to
\[
\frac{1}{n} \sum\limits_{k=2^{r_{n}}}^{n}\xx_{k} \superexpnn
(1-c)L_{1,2}
\]
and it then follows that
\[
\frac{1}{n}\sum\limits_{k=2^{p}}^{n}\xx_{k} \superexpnn L_{1,2}.
\]
The term $\frac{1}{n}\sum\limits_{k=2^{p}}^{n}\xx_{k}\xx_{k}^{t}$
can be dealt with in the same way.
\end{rem}
The rest of the paper is dedicated to the proof of our main results. We focus on the proof in the case 2, and some explanation are given on how  to obtain the results in the case 1.

\section{Proof of the main results}
We start with the proof of the  deviation inequalities.
\subsection{Proof of Theorem \ref{thm:deviation_ineq_theta}}
We begin the proof with the case 2. Let $\delta>0$ and $b>0$ such
that $b < \|\Sigma\|/(1 + \delta)$. We have from (\ref{thetaM_n})
\begin{align*}
\pp\left(\|\hat{\theta}_{n} - \theta\| > \delta\right) &=
\pp\left(\frac{\|M_{n}\|}{\|\Sigma_{n-1}\|} > \delta,
\frac{\|\Sigma_{n-1}\|}{|\ttt_{n-1}|} \geq b\right) +
\pp\left(\frac{\|M_{n}\|}{\|\Sigma_{n-1}\|} > \delta,
\frac{\|\Sigma_{n-1}\|}{|\ttt_{n-1}|} < b\right) \\ &\leq
\pp\left(\frac{\|M_{n}\|}{|\ttt_{n-1}|} > \delta b\right) +
\pp\left(\left\|\frac{\Sigma_{n-1}}{|\ttt_{n-1}|} - \Sigma\right\|
>\|\Sigma\|- b\right).
\end{align*}
Since $b<\|\Sigma\|/(1+\delta)$, then,
\[
\pp\left(\left\|\frac{\Sigma_{n-1}}{|\ttt_{n-1}|} - \Sigma\right\|
>\|\Sigma\|- b\right) \leq \pp\left(\left\|\frac{\Sigma_{n-1}}{|\ttt_{n-1}|} - \Sigma\right\|
> \delta b \right).
\]

It then follows that
\[
\pp\left(\|\hat{\theta}_{n} - \theta\| > \delta\right) \leq
2\max\left\{\pp\left(\frac{\|M_{n}\|}{|\ttt_{n-1}|} > \delta
b\right), \pp\left(\left\|\frac{\Sigma_{n-1}}{|\ttt_{n-1}|} -
\Sigma\right\|
> \delta b \right)\right\}.
\]
On the one hand, we have
\begin{align*}
\pp\left(\frac{\|M_{n}\|}{|\ttt_{n-1}|} > \delta b\right) \leq
\sum\limits_{\eta=0}^{1} \left\{
\pp\left(\left|\frac{1}{|\ttt_{n-1}|} \sum\limits_{k\in
\ttt_{n-1,p-1}} \vep_{2k+ \eta}\right| > \frac{\delta
b}{4}\right)\right. \left.\right. \hspace{3cm} \\  +
\left.\pp\left(\left\|\frac{1}{|\ttt_{n-1}|} \sum\limits_{k\in
\ttt_{n-1,p-1}} \vep_{2k+\eta} \xx_{k}\right\| > \frac{\delta
b}{4}\right)\right\}.
\end{align*}
Now, by carrying out the same calculations as those which have
permit us to obtain Lemma \ref{lconv3} and equation
(\ref{ineqexpoU_n}), we are led to
\begin{equation}\label{dev_M_n1}
\pp\left(\frac{\|M_{n}\|}{|\ttt_{n-1}|} > \delta b\right) \leq
\begin{cases} c_{1} \exp\left(-\frac{c_{2}(\delta b)^{2}}{c_{3} +
c_{4}(\delta b)}2^{n}\right) \hspace{1.75cm} \text{if $\beta<
\frac{\sqrt{2}}{2}$} ,\\ \\ c_{1}\exp\left(-\frac{c_{2}(\delta
b)^{2}}{c_{3} + c_{4}(\delta b)} \frac{2^{n}}{n}\right)
\hspace{1.7cm} \text{if $\beta = \frac{\sqrt{2}}{2}$}, \\ \\
c_{1}\exp\left(-\frac{c_{2}(\delta b)^{2}}{c_{3} + c_{4}(\delta b)}
\left(\frac{1}{\beta^{2}}\right)^{n}\right) \hspace{0.8cm} \text{if
$\beta> \frac{\sqrt{2}}{2}$},
\end{cases}
\end{equation}
where positive constants $c_{1}$, $c_{2}$, $c_{3}$ and $c_{4}$
depend on $\sigma,$ $\beta,$ $\gamma$ and $\phi$ and $(c_{3}, c_{4})
\neq (0,0).$

On the other hand, noticing that $\Sigma_{n-1} = I_{2}\otimes
S_{n-1}$, we have
\[
\pp\left(\left\|\frac{\Sigma_{n-1}}{|\ttt_{n-1}|} - \Sigma\right\|
> \delta b\right) \leq 2\pp\left(\left\|\frac{S_{n-1}}{|\ttt_{n-1}|} - L\right\|
> \frac{\delta b}{2}\right).
\]
Next, from the proofs of Propositions \ref{convexpocrochet1} and
\ref{convexpocrochet2}, we deduce that
\begin{equation}\label{dev_M_n2}
\pp\left(\left\|\frac{\Sigma_{n-1}}{|\ttt_{n-1}|} - \Sigma\right\|
> \frac{b}{2}\right) \leq \begin{cases} c_{1} \exp\left(-\frac{c_{2}(\delta b)^{2}}{c_{3} +
c_{4}(\delta b)}\frac{2^{n}}{(n-1)^{2}}\right) \hspace{1.6cm}
\text{if $\beta< \frac{\sqrt{2}}{2}$} \\
\\ c_{1}\exp\left(-\frac{c_{2}(\delta b)^{2}}{c_{3} + c_{4}(\delta b)}
\frac{2^{n}}{(n-1)^{3}}\right) \hspace{1.6cm} \text{if $\beta =
\frac{\sqrt{2}}{2}$} \\ \\ c_{1}\exp\left(-\frac{c_{2}(\delta
b)^{2}}{c_{3} + c_{4}(\delta b)}
\left(\frac{1}{(n-1)^{2}\beta^{2n}}\right)\right) \hspace{0.35cm}
\text{if $\beta> \frac{\sqrt{2}}{2}$},
\end{cases}
\end{equation}
where positive constants $c_{1}$, $c_{2}$, $c_{3}$ and $c_{4}$
depend on $\sigma,$ $\beta,$ $\gamma$ and $\phi$ and $(c_{3}, c_{4})
\neq (0,0).$ Now, (\ref{dev_ineq_theta1}) follows from
(\ref{dev_M_n1}) and (\ref{dev_M_n2}).

In the case 1, the proof follows exactly the same lines as before
and uses the same ideas as the proof of Proposition
\ref{convexpocrochet3}. Particularly, we have in this case
\begin{equation*}
\pp\left(\left\|\frac{\Sigma_{n-1}}{|\ttt_{n-1}|} - \Sigma\right\|
> \frac{b}{2}\right) \leq \begin{cases} c_{1} \exp\left(-\frac{c_{2}(\delta b)^{2}}{c_{3} + (\delta b)}
\frac{2^{n}}{(n-1)^{2}}\right) \hspace{0.25cm} \text{if $\beta<
\frac{1}{2}$} \\ \\ c_{1}(n-1)\exp\left(-\frac{c_{2}(\delta
b)^{2}}{c_{3} + (\delta b)} \frac{2^{n}}{(n-1)^{2}}\right)
\hspace{0.25cm} \text{if $\beta = \frac{1}{2}$} \\ \\
c_{1}(n-1)\exp\left(-\frac{c_{2}(\delta b)^{2}}{c_{3} + (\delta b)}
\left(\frac{1}{(n-1)\beta^{n}}\right)\right) \hspace{0.25cm}
\text{if $\beta> \frac{1}{2}$}.
\end{cases}
\end{equation*}
where positive constants $c_{1}$, $c_{2}$ and $c_{3}$ depend on
$\sigma,$ $\beta,$ $\gamma$ and $\phi.$ (\ref{dev_ineq_theta1}) then
follows in this case, and this ends the proof of Theorem.
\ref{thm:deviation_ineq_theta}.


\subsection{Proof of Theorem  \ref{mdp_sigma_n_rho_n}}

At first we need to prove the following

\bthm\label{thm:mdp_M_n} In the case 1 or in the case 2, the sequence $\displaystyle\left(M_n /\left(b_{|\ttt_{n-1}|}\sqrt{|\ttt_{n-1}}|\right)\right)_{n\ge 1}$
satisfies the MDP on $\rr^{2(p+1)}$ with speed
$b^2_{|\ttt_{n-1}|}$ and rate function
\begin{equation}\label{IM}
I_M(x)=\sup_{\lambda\in \rr^{2(p+1)}}\{\lambda^{t} x-\lambda^{t} (\Gamma\otimes
L)\lambda\}=\frac 12 x^t  (\Gamma\otimes L)^{-1} x.
\end{equation}
\nthm

\subsubsection{Proof of Theorem \ref{thm:mdp_M_n} }

Now, as in Bercu et al. \cite{BSG09}, denote by $(\GG_{n})_{n\geq
1}$ the sister pair-wise filtration, that is
$\GG_{n}=\sigma\{X_{1},(X_{2k},X_{2k+1}),1\leq k\leq n\}.$ We
introduce the following $(\GG_n)$ martingale difference sequence
$(D_n)$, given by
\[
D_n=V_n\otimes
Y_n=\begin{pmatrix}
\vep_{2n}\\
\vep_{2n}\xx_n\\
\vep_{2n+1}\\
\vep_{2n+1}\xx_n
\end{pmatrix}.
\]
We clearly have
\[
D_nD_n^t=V_nV_n^t\otimes Y_nY_n^t.
\]
So we obtain that the quadratic variation of the  $(\GG_n)$
martingale $(N_n)_{n\ge 2^{p-1}}$ given by
\[
N_{n} = \sum\limits_{k=2^{p-1}}^{n} D_{k}
\]
is
\[
\langle N\rangle_n=\sum_{k=2^{p-1}}^{n}
\ee(D_kD_k^t/{\GG_{k-1}})=\Gamma\otimes \sum\limits_{k=2^{p-1}}^{n}
Y_{k}Y_{k}^{t}.
\]
Now we clearly have $M_{n} = N_{|\ttt_{n-1}|}$ and $\langle
M\rangle_{n} = \langle N\rangle_{|\ttt_{n-1}|} = \Gamma \otimes
S_{n-1}.$ From Proposition \ref{convergence_crochet}, and since
$\langle M \rangle_n = \Gamma\otimes S_{n-1}$, we have
\begin{equation}
\label{croch}
\frac{\langle M \rangle_n}{|\ttt_n|} \superexp \Gamma\otimes L.
\end{equation}
Before going to the proof of the MDP results, we state the
exponential Lyapounov condition for $(N_{n})_{n\geq 2^{p-1}}$, which
implies exponential Lindeberg condition, that is
\[
\limsup \frac{1}{b_{n}^{2}} \log \pp\left(\frac{1}{n}
\sum\limits_{k=2^{p-1}}^{n}
\ee\left[\|D_{k}\|^{2}\mathbf{1}_{\left\{\|D_{k}\|\geq r
\frac{\sqrt{n}}{b_{n}}\right\}}\right] \geq \delta\right) = -\infty,
\]
(see e.g \cite{Wor01c} for more details on this implication).

\brmk By \cite{DjGuWu06}, we infer from the condition {\bf (Ea)} that
\begin{enumerate}
\item[{\bf (Na)}] one can find $\gamma_{a}>0$ such that
for all $n\geq p-1,$ for all $k\in \g_{n+1}$ and for all $t\in \rr,$ with  $\mu_a=\ee(|\vep_{k}|^{a}/\FF_{n})$ a.s.

\[
\ee\left[\exp t \left(|\vep_{k}|^{a}-\mu_a\right)/\FF_{n}\right] \leq \exp\left(
\frac{\gamma_{a} t^{2}}{2}\right)\quad a.s.
\]
\end{enumerate}
\nrmk
\begin{prop}\label{lemma:lyapounovexpo} Let $(b_n)$ a sequence satisfying the Assumption {\bf (V2)}. Assume that hypothesis {\bf (Na)} and {\bf (Xa)} are satisfied. Then
there exists $B>0$ such that
\[
\limsup_{n\rightarrow \infty}\frac{1}{b_{n}^{2}}
\log\pp\left(\frac{1}{n}
\sum\limits_{j=2^{p-1}}^{n}\ee\left[\|D_{j}\|^{a}/\GG_{j-1}\right]>B\right)=-\infty.
\]
\end{prop}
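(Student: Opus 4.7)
The plan is to reduce the conditional $a$-th moment of $D_j$ to a tractable function of $\xx_j$, and then to apply a contraction-type decomposition together with Chernoff bounds derived from (Na) and (Xa). First, I exploit the Kronecker structure $D_j=V_j\otimes Y_j$, so that $\|D_j\|^a=\|V_j\|^a\|Y_j\|^a$. Since $a\ge 2$, convexity gives
\[
\|V_j\|^a\le 2^{a/2-1}\bigl(|\vep_{2j}|^a+|\vep_{2j+1}|^a\bigr),\qquad \|Y_j\|^a\le 2^{a/2-1}\bigl(1+\|\xx_j\|^a\bigr).
\]
Now $\xx_j$ is $\GG_{j-1}$-measurable, and by the pairwise conditional independence in case~2 the conditional law of $(\vep_{2j},\vep_{2j+1})$ given $\GG_{j-1}$ coincides with its conditional law given $\FF_{r_j}$; hence (Ea) yields $\ee[|\vep_{2j+\eta}|^a/\GG_{j-1}]\le \mu_a$ a.s.\ for some constant $\mu_a$. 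Therefore
\[
\ee\bigl[\|D_j\|^a/\GG_{j-1}\bigr]\le C\bigl(1+\|\xx_j\|^a\bigr)\quad\text{a.s.}
\]
for some deterministic $C>0$, and the problem reduces to showing that, for some $B'>0$, the probability $\pp\bigl(\tfrac{1}{n}\sum_{j=2^{p-1}}^n\|\xx_j\|^a>B'\bigr)$ decays superexponentially at speed $b_n^2$ under (V2).

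Next, I iterate the BAR($p$) recursion $\xx_{2k}=A\xx_k+\eta_{2k}$ (and its odd counterpart), using the contraction $\beta=\max(\|A\|,\|B\|)<1$, to get the pathwise bound
\[
\|\xx_j\|\le \beta^{r_j-p+1}\overline{X}_1+\sum_{i=0}^{r_j-p}\beta^i\bigl(\alpha+|\vep_{[j/2^i]}|\bigr),\qquad \alpha=\max(|a_0|,|b_0|).
\]
Raising to the $a$-th power and applying H\"older's inequality on the geometric series gives an $a$-analogue of the decomposition (\ref{decom_crochet_V}) of \cite{BSG09}:
\[
\|\xx_j\|^a\le C_1\beta^{a(r_j-p+1)}\overline{X}_1^a+C_2\sum_{i=0}^{r_j-p}\beta^i|\vep_{[j/2^i]}|^a+C_3.
\]
Summing over $j=2^{p-1},\dots,n$ and interchanging the order of summation on the noise term, $\tfrac{1}{n}\sum_j\|\xx_j\|^a$ is dominated by a deterministic $O(1)$ constant, an initial-state contribution of the form $(\overline{X}_1^a/n)\mathcal R_n$ with $\mathcal R_n$ deterministic, and a weighted noise sum $(1/n)\sum_m w_m^{(n)}|\vep_m|^a$ with deterministic geometric weights $w_m^{(n)}$ whose total mass is $O(n)$.

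The initial-state contribution is handled by Chernoff's inequality combined with (Xa), since $\mathcal R_n/n$ remains bounded uniformly in $n$ and in $\beta\in(0,1)$. The noise part is the heart of the matter: decomposing $|\vep_m|^a=(|\vep_m|^a-\mu_a)+\mu_a$ and applying Chernoff while iteratively conditioning along successive generations, exactly as in the proof of (\ref{devineqeps2}) and in Lemma \ref{lconv5}, one uses (Na) (which follows from (Ea) by the remark preceding the proposition) to produce a sub-Gaussian control on the centered piece. Combining the three regimes $\beta^2<1/2$, $\beta^2=1/2$, $\beta^2>1/2$ exactly as in (\ref{V4})--(\ref{V6}) yields the desired superexponential decay for $B$ larger than the deterministic majorant. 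The main obstacle is precisely the interaction between the geometric weights $w_m^{(n)}$ produced by the contraction and the scale $b_n$ from (V2); this is why Assumption (V2) is formulated with three subcases, and the present proposition inherits exactly the same sharpness from that dichotomy.
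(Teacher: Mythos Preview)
Your proposal is correct and follows essentially the same route as the paper: reduce $\ee[\|D_j\|^a/\GG_{j-1}]$ to $C(1+\|\xx_j\|^a)$ via the Kronecker structure and the conditional $a$-th moment bound, then control $\sum_j\|\xx_j\|^a$ through the contraction decomposition into the $P_n,Q_n,R_n$ pieces and conclude by Chernoff with {\bf (Na)} and {\bf (Xa)}, mirroring the argument leading to (\ref{V4})--(\ref{V6}). The only organisational difference is that the paper first establishes the estimate along complete subtrees (indices $|\ttt_n|$) and then extends to arbitrary $n$ via the splitting of Remark~\ref{convexpocrochet_incomplet}, whereas you sum over $j=2^{p-1},\dots,n$ directly; this is cosmetic, since your weights $w_m^{(n)}$ carry the same geometric structure.
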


\begin{proof} We are going to prove that
\begin{equation}\label{expo1}
\limsup_{n\rightarrow \infty}\frac{1}{b_{|\ttt_{n}|}^{2}}
\log\pp\left(\frac{1}{|\ttt_{n}|}
\sum\limits_{j=2^{p}}^{|\ttt_{n}|}\ee\left[\|D_{j}\|^{a}/\GG_{j-1}\right]>B\right)=-\infty,
\end{equation}
and the Proposition (\ref{lemma:lyapounovexpo}) will follow performing as in Remark
\ref{convexpocrochet_incomplet}.  We have
\[
\sum\limits_{j\in\ttt_{n,p}}\ee\left[\|D_{j}\|^{a}/\GG_{j-1}\right]
\leq c\mu^{a}\sum\limits_{j\in
\ttt_{n,p}}\left(1+\|\xx_{j}\|^{a}\right),
\]
where $c$ is a positive constant which depends on $a.$ From
(\ref{bar_p2}), we deduce that
\[
\sum\limits_{j\in\ttt_{n,p}}\|\xx_{j}\|^{a}\leq
\frac{c^2}{(1-\beta)^{a-1}} P_n+
\frac{c^2\alpha^{a}Q_{n}}{(1-\beta)^{a-1}} +
2cR_n\overline{X}_{1}^{a},
\]
where
\[
P_{n}=\sum\limits_{j\in\ttt_{n,p}}
\sum\limits_{i=0}^{r_{j}-p}\beta^{i}|\vep_{[\frac{j}{2^{i}}]}|^{a},
\hspace{0.25cm} Q_{n} = \sum\limits_{j\in\ttt_{n,p}}
\sum\limits_{i=0}^{r_{j}-p}\beta^{i}, \hspace{0.25cm} R_{n} =
\sum\limits_{j\in\ttt_{n,p}}\beta^{a(r_{j}-p+1)},
\]
and $c$ is a positive constant. Now, performing as in the proof of
Proposition \ref{convexpocrochet2},  using hypothesis {\bf (Na)} and
{\bf (Xa)} instead of {\bf (N2)} and {\bf (X2)} we get for $B$ large
enough
\begin{equation}\label{expo3}
\limsup_{n\rightarrow \infty}\frac{1}{b_{|\ttt_{n}|}^{2}}
\log\pp\left(\frac{1}{|\ttt_{n}|}\sum\limits_{j\in
\ttt_{n,p}}\|\xx_{j}\|^{a}>B\right)=-\infty.
\end{equation}
Now (\ref{expo3}) leads us to (\ref{expo1}) and performing as in
Remark \ref{convexpocrochet_incomplet}, we obtain the Proposition \ref{lemma:lyapounovexpo}.

\brmk\label{Markov}In the case 1, we clearly have that $(\xx_{n},
n\in \ttt_{\cdot,p-1}),$ where
\[
\ttt_{\cdot,p-1} = \bigcup\limits_{r=p-1}^{\infty} \g_{r},
\]
is a bifurcating Markov chain with initial state $\xx_{2^{p-1}} =
(X_{2^{p-1}}, X_{2^{p-2}}, \cdots, X_{1})^{t}.$ Let $\nu$ the law of
$\xx_{2^{p-1}}.$ From hypothesis {\bf (X2)}, we deduce that $\nu$
has finite moments of all orders. We denote by $P$ the transition
probability kernel associated to $(\xx_{n}, n\in \ttt_{\cdot,p-1}).$
Let $(\yy_r,r\in \nn)$ the ergodic stable Markov chain associated to
$(\xx_{n}, n\in \ttt_{\cdot,p-1}).$ This Markov chain is defined as
follows, starting from the root $\yy_0=\xx_{2^{p-1}}$ and if
$\yy_r=\xx_n$ then $\yy_{r+1}=\xx_{2n+\zeta_{r+1}}$ for a sequence
of independent Bernoulli r.v. $(\zeta_q, q\in \nn^*)$ such that
$\pp(\zeta_q=0)=\pp(\zeta_q=1)=1/2$. Let $\mu$ the stationary
distribution associated to $(\yy_r,r\in \nn).$ For more details on
bifurcating Markov chain and the associated ergodic stable Markov
chain, we refer to \cite{G07} (see also \cite{BDG11}).

From \cite{BDG11}, we deduce that for all real bounded function $f$
defined on $(\dR^{p})^{3},$
\[
\frac{1}{b_{|\ttt_{n-1}|}\sqrt{|\ttt_{n-1}|}} \sum\limits_{k\in
\ttt_{n-1,p-1}} f\left(\xx_{k}, \xx_{2k}, \xx_{2k+1}\right)
\]
satisfies a MDP on $\dR$ with speed $b_{|\ttt_{n-1}|}^{2}$ and the
rate function $I(x) = \frac{x^{2}}{2S^{2}(f)},$ where $S^{2}(f) =
<\mu, P(f^{2})-(Pf)^{2}>.$

Now, let the function $f$ defined on $(\dR^{p})^{3}$ by $f(x,y,z) =
\|x\|^{2} + \|y\|^{2} + \|z\|^{2}.$ Then, using the relation
(\ref{convergence_crochet}) in Proposition
\ref{prop:convergence_crochet}, the above MDP for real bounded
functionals of the bifurcating Markov chain $(\xx_{n}, n\in
\ttt_{\cdot,p-1})$ and the truncation of the function $f$, we prove
(in the same manner as the proof of lemma 3 in Worms \cite{Wor99d})
that for all $r>0$
\begin{align*}
\limsup_{R\rightarrow \infty}\limsup_{n\rightarrow
\infty}\frac{1}{b_{n}^{2}} \log\pp\Bigg(\frac{1}{n}
\sum\limits_{j=2^{p-1}}^{n} \left(\|X_{j}\|^{2} + \|X_{2j}\|^{2} +
\|X_{2j+1}\|^{2}\right)\Bigg. \hspace{4cm} \left.\right. \\ \Bigg.
\times \mathrm{\bf 1}_{\left\{||\xx_{j}|| + ||\xx_{2j}|| +
||\xx_{2j+1}|| > R \right\}}>r\Bigg) = -\infty,
\end{align*}
which implies the following Lindeberg condition (for more detail one
can see Proposition 2 in Worms \cite{Wor99d})
\begin{align*}
\limsup_{n\rightarrow \infty}\frac{1}{b_{n}^{2}}
\log\pp\Bigg(\frac{1}{n} \sum\limits_{j=2^{p-1}}^{n}
\left(\|X_{j}\|^{2} + \|X_{2j}\|^{2} + \|X_{2j+1}\|^{2}\right)\Bigg.
\hspace{4cm} \left.\right. \\ \Bigg. \times
\mathrm{\bf 1}_{\left\{||\xx_{j}|| + ||\xx_{2j}|| + ||\xx_{2j+1}|| > r
\frac{\sqrt{n}}{b_{n}} \right\}}>\delta \Bigg) = -\infty,
\end{align*}
for all $\delta>0$ and for all $r>0.$ Notice that the above
Lindeberg condition implies particularly the Lindeberg condition on
the sequence $(\xx_{n}).$

\nrmk


Now, we back to the proof of Theorem \ref{thm:mdp_M_n}. We divide the proof into four steps.
In the first one, we  introduce a truncation of the martingale $(M_n)_{n\ge 0}$ and prove that the
truncated martingale satisfies some MDP thanks to Puhalskii's
Theorem \ref{mdppuh}. In the second part, we show that the truncated
martingale is an exponentially good approximation of $(M_n)$, see
e.g. Definition 4.2.14 in \cite{DemZei98}. We conclude by the
identification of the rate function.
\bigskip

{\bf Proof in the case 2}

{\bf Step 1.}  From now on, in order to apply Puhalskii's result  \cite{Puha97}
(Puhalskii's Theorem \ref{mdppuh}) for the MDP for martingales, we introduce the following
truncation of the martingale $(M_n)_{n\ge 0}$. For $r>0$ and $R>0$,
\[
M_n^{(r,R)}=\sum_{k\in \ttt_{n-1,p-1}}D^{(r,R)}_{k,n}.
\]

where, for all $1 \leq k \leq n$,  $D_{k,n}^{(r,R)}=V_k^{(R)}\otimes Y_{k,n}^{(r)}$, with
$$V_n^{(R)}=\left(\vep^{(R)}_{2n},\vep^{(R)}_{2n+1}\right)^t \qquad {\rm and}\qquad  Y_{k,n}^{(r)}=\Big(1,\xx_{k,n}^{(r)}\Big)^t,$$
where
\[
\vep_k^{(R)}=\vep_k{\bf 1}_{\{|\vep_k|\le R\}}-\ee\left[\vep_k{\bf
1}_{\{|\vep_k|\le R\}}\right], \qquad \xx_{k,n}^{(r)}=\xx_k{\bf
1}_{\Big\{||\xx_{k}||\le
r\frac{\sqrt{|\ttt_{n-1}|}}{b_{|\ttt_{n-1}|}}\Big\}}.
\]

We introduce $\Gamma^{(R)}$ the conditional covariance matrix associated with
$(\epsilon^{(R)}_{2k},\epsilon^{(R)}_{2k+1})^t$ and the truncated matrix associated with $S_n$ :
\begin{equation*}
\Gamma^{(R)}=\begin{pmatrix}
\sigma_R^2 & \rho_R\\
\rho_R & \sigma_R^2
\end{pmatrix}
 \qquad {\rm and} \qquad S_n^{(r)} =\sum_{k\in \ttt_{n,p-1}}\begin{pmatrix}
1& (\xx^{(r)}_{k,n})^t\\
\xx^{(r)}_{k,n} & \xx^{(r)}_{k,n}(\xx^{(r)}_{k,n})^t\\
\end{pmatrix}.
\end{equation*}

The condition {\bf (P2)} in Puhalskii's Theorem \ref{mdppuh} is
verified by the construction of the truncated martingale, that is
for some positive constant $c$, we have that for all $k\in
\ttt_{n-1}$
$$||D_{k,n}^{(r,R)}||\le c\frac{ \sqrt {|\ttt_{n-1}|} } {b_{|\ttt_{n-1}|}}.$$

From Proposition \ref{lemma:lyapounovexpo}, we also have for all $r>0$,
\begin{equation}
\label{Linder} \frac{1}{|\ttt_{n-1}|}\sum_{k\in\ttt_{n-1,p-1}}
\xx_{k} \mathrm{I}_{\left\{||\xx_{k}|| > r\frac{\sqrt
{|\ttt_{n-1}|}}{b_{|\ttt_{n-1}|}} \right\}} \superexp 0;
\end{equation}
and
\begin{equation}
\label{Linder2}
\frac{1}{|\ttt_{n-1}|}\sum_{k\in\ttt_{n-1,p-1}}\xx_{k}\xx_{k}^t
\mathrm{I}_{\left\{||\xx_{k}|| > r\frac{\sqrt
{|\ttt_{n-1}|}}{b_{|\ttt_{n-1}|}} \right\}} \superexp 0.
\end{equation}
From (\ref{Linder}) and (\ref{Linder2}), we deduce that for all $r>0$
\begin{equation}\label{S_ntrunc} \frac{1}{|\ttt_{n-1}|}\left(S_{n-1}-S_{n-1}^{(r)} \right)\superexp 0.\end{equation}

Then, we easily transfer the properties $\eqref{croch}$  to the
truncated martingale $(M_n^{(r,R)})_{n\ge 0}$. We have for all $R>0$
and all $r>0$,
\begin{equation*}
\frac{\langle M^{(r,R)} \rangle_n}{|\ttt_{n-1}|} =
\Gamma^{(R)}\otimes\frac{S_{n-1}^{(r)}}{|\ttt_{n-1}|}=
-\Gamma^{(R)}\otimes\left( \frac{S_{n-1}-S_{n-1}^{(r)}
}{|\ttt_{n-1}|}\right)+\Gamma^{(R)}\otimes\frac{S_{n-1}}{|\ttt_{n-1}|}
\superexp \Gamma^{(R)} \otimes L
\end{equation*}
That is condition {\bf (P1)} in Puhalskii's Theorem \ref{mdppuh}.

Note also that Proposition \ref{lemma:lyapounovexpo} work for the
truncated martingale $(M_n^{(r,R)})_{n\ge 0}$, which ensures the
Lindeberg's condition and thus condition {\bf (P3)} to
$(M_n^{(r,R)})_{n\ge 0}$. By Theorem \ref{mdppuh} in the Appendix,
we deduce that
$(M_n^{(r,R)}/(b_{|\ttt_{n-1}|}\sqrt{|\ttt_{n-1}|}))_{n\ge 0}$
satisfies a MDP on $\dR^{2(p+1)}$ with speed $b_{|\ttt_{n-1}|}^2$
and good rate function given by

\begin{equation}\label{rateIR}
I_R(x)=\frac 12 x^t  (\Gamma^{(R)}\otimes L)^{-1} x
.\end{equation}

\bigskip
{\bf Step 2.} At first, we infer from the hypothesis  {\bf (Ea)} that:
\begin{enumerate}
\item[{\bf (N1R)}] there is a sequence $(\kappa_{R})_{R>0}$ with $\kappa_{R}\longrightarrow 0$
when $R$ goes to infinity, such that for all $n\geq
p-1,$ for all $k\in \g_{n+1}$, for all $t\in \rr$ and for $R$ large
enough
\[
\ee\left[\exp t \left(\vep_{k} - \vep_{k}^{R}\right)/\FF_{n}\right]\leq
\exp\left(\frac{\kappa_{R} t^{2}}{2}\right),\,\,\, a.s.
\]
\end{enumerate}

 The approximation, in the sense of the moderate
deviation, is described by the following convergence, for all $r> 0$
and all $\delta>0$,
\[
\limsup_{R\rightarrow\infty}\limsup_{n\rightarrow\infty}
\frac{1}{b_{|\ttt_{n-1}|}^{2}} \log\pp\left(
\frac{\|M_n-M_n^{(r,R)}\|}{\sqrt{|\ttt_{n-1}|}b_{|\ttt_{n-1}|}}>\delta\right)
= -\infty.
\]
For that, we shall prove that for $\eta\in\{0,1\}$
\begin{equation}\label{negI1}
I_1=\frac{1}{\sqrt{|\ttt_{n-1}|}b_{|\ttt_{n-1}|}} \sum_{k\in
\ttt_{n-1,p-1}}\left(\vep_{2k+\eta}-\vep^{(R)}_{2k+\eta}\right) \superexp 0,
\end{equation}
\begin{equation}\label{negI2}
I_2=\frac{1}{\sqrt{|\ttt_{n-1}|}b_{|\ttt_{n-1}|}} \sum_{k\in
\ttt_{n-1,p-1}}\left(\vep_{2k+\eta}\xx_k-\vep^{(R)}_{2k+\eta}\xx^{(r)}_{k,n}\right)\superexp
0.
\end{equation}
To prove (\ref{negI1}) and (\ref{negI2}), we have to do it only for $\eta=0$ the same proof works for $\eta=1$.

\bigskip
{\bf Proof of (\ref{negI1}) }
We have  for all $\alpha >0$ and $R$ large enough
\begin{eqnarray*}
&\,&\ee\left(\exp\left(\alpha \sum_{k\in\ttt_{n-1,p-1}}(\vep_{2k}-\vep^{(R)}_{2k})\right)\right)\\
&=&\ee\left[\prod_{k\in\ttt_{n-2,p-1}}\exp\left(\alpha(\vep_{2k}-\vep^{(R)}_{2k})\right)\times \ee\left[\prod_{k\in\g_{n-1}}\exp\left(\alpha(\vep_{2k}-\vep^{(R)}_{2k})\right)\Big/\FF_{n-1}\right]\right]\\
&=& \ee\left[\prod_{k\in\ttt_{n-2,p-1}}\exp\left(\alpha(\vep_{2k}-\vep^{(R)}_{2k})\right)\times\prod_{k\in\g_{n-1}}\ee\left[\exp\left(\alpha(\vep_{2k}-
\vep^{(R)}_{2k})\right)\Big/\FF_{n-1}\right]\right] \\
&\le& \ee\left[\prod_{k\in\ttt_{n-2,p-1}}\exp\left(\alpha(\vep_{2k}-\vep^{(R)}_{2k})\right)\exp\left(|\g_{n-1}|
\alpha^2\kappa_R\right)\right]\\
&\le& \exp\left(|\ttt_{n-1}|\alpha^2\kappa_R\right).
\end{eqnarray*}

where hypothesis {\bf (N1R)} was used to get the first inequality,
and the second was obtained by induction. By Chebyshev inequality and  the
previous calculation applied to $\alpha=\lambda b_{|\ttt_{n-1}|}/|\ttt_{n-1}|$, we obtain for all $\delta>0$

$$\pp\left(\frac{1}{\sqrt{|\ttt_{n-1}|}b_{|\ttt_{n-1}|}} \sum_{k\in
\ttt_{n-1,p-1}}(\vep_{2k}-\vep^{(R)}_{2k})\ge \delta\right)\le
\exp\left(-b^2_{|\ttt_{n-1}|} \Big(\delta\lambda-\kappa_R\lambda^2\Big)\right).$$

Optimizing on $\lambda$, we obtain
\[
\frac{1}{b^2_{|\ttt_{n-1}|}}
\log\pp\left(\frac{1}{\sqrt{|\ttt_{n-1}|}b_{|\ttt_{n-1}|}}
\sum_{k\in \ttt_{n-1,p-1}}\left(\vep_{2k}- \vep^{(R)}_{2k}\right)\ge
\delta\right)\le -\frac{\delta^2}{4\kappa_R}.
\]
Letting $n$ goes to infinity  and than $R$ goes to infinity, we
obtain the negligibility in (\ref{negI1}).
\bigskip

{\bf Proof of (\ref{negI2}) }
Now, since we have the decomposition
\[
\vep_{2k}\xx_k-\vep^{(R)}_{2k}\xx^{(r)}_{k,n}=\left(\vep_{2k}-\vep^{(R)}_{2k}\right)\xx_{k,n}^{(r)}+
\vep_{2k}\left(\xx_k-\xx^{(r)}_{k,n}\right),
\]
we introduce the following notations
\[
L_n^{(r)}= \sum_{k\in
\ttt_{n-1,p-1}}\vep_{2k}\left(\xx_k-\xx^{(r)}_{k,n}\right)\qquad{\rm and}\qquad F_n^{(r,R)}= \sum_{k\in
\ttt_{n-1,p-1}}\left(\vep_{2k}-\vep^{(R)}_{2k}\right)\xx^{(r)}_{k,n}.
\]

To prove (\ref{negI2}), we will show that for all $r>0$
\begin{equation}\label{negL}
\frac{L_n^{(r)}}{\sqrt{|\ttt_{n-1}|}b_{|\ttt_{n-1}|}} \superexp
0,
\end{equation}
and for all $r>0$ and all $\delta>0$
\begin{equation}\label{negF}
\limsup\limits_{R\rightarrow \infty} \limsup\limits_{n\rightarrow
\infty} \frac{1}{b_{|\ttt_{n-1}|}^{2}} \log\pp\left(
\frac{\|F_{n}^{(r,R)}\|}{b_{|\ttt_{n-1}|}\sqrt{|\ttt_{n-1}|}}
> \delta\right) = - \infty.
\end{equation}

Let us first deal with $(L^{(r)}_n)$. Let its first component
\[
L_{n,1}^{(r)} =
\sum\limits_{k\in\ttt_{n-1,p-1}}\vep_{2k}\left(X_{k}-X^{(r)}_{k,n}\right).
\]

For $\lambda\in \rr$, we consider the random sequence
$(Z^{(r)}_{n,1})_{n\geq {p-1}}$ defined by
\[
Z^{(r)}_{n,1}=\exp\left(\lambda L_{n,1}^{(r)}- \frac{\lambda^{2}\phi}{2}
\sum\limits_{k\in\ttt_{n-1,p-1}}X_{k}^{2}\mathbf{1}_{\left\{
\|\xx_{k}\|>r\frac{ \sqrt{|\ttt_{n-1}|}}{b_{|\ttt_{n-1}|}} \right\}} \right)
\]
where $\phi$ appears in {\bf (N1)}.

For $b>0$, we introduce the following event
\[
A_{n,1}^{(r)}(b) = \left\{\frac{1}{|\ttt_{n-1}|}\sum\limits_{k\in\ttt_{n-1,p-1}}X_{k}^{2} \mathbf{1}_{\left\{\|\xx_{k}\|>r\frac{ \sqrt{|\ttt_{n-1}|}}{b_{|\ttt_{n-1}|}} \right\}}>b\right\}.
\]

Using {\bf (N1)},  we have for all $\delta>0$
\begin{eqnarray}\label{surmarZ}
&\,&\pp\left(\frac{1}{b_{|\ttt_{n-1}|\sqrt{|\ttt_{n-1}|}}} L_{n,1}^{(r)}>\delta\right)\notag
\\ &\le& \pp\Big(A_{n,1}^{(r)}(b)\Big) + \pp\left(Z_{n,1}^{(r)}> \exp\left(\delta\lambda b_{|\ttt_{n-1}|}\sqrt{|\ttt_{n-1}|} -
\frac{\lambda^{2}\phi}{2}b |\ttt_{n-1}|\right)\right)\notag \\ &\leq& \pp\Big(A_{n,1}^{(r)}(b)\Big) +
\exp\left(-b_{|\ttt_{n-1}|}\sqrt{|\ttt_{n-1}|} \left(\delta\lambda -
\frac{b\phi\sqrt{|\ttt_{n-1}|}}{2b_{|\ttt_{n-1}|}}\lambda^{2}\right)\right),
\end{eqnarray}
where the second term in (\ref{surmarZ}) is obtained by conditioning
successively on $(\GG_i)_{2^{p-1}\le i\le |\ttt_{n-1}|-1}$ and using
the fact that
$$\ee\left[\exp\left(\lambda\vep_{2^p}\left(X_{2^{p-1}}-X^{(r)}_{2^{p-1}}\right)-\frac{\lambda^2\phi}{2}X_{2^{p-1}}^{2}\mathbf{1}_{\Big\{\|\xx_{2^{p-1}}\|>r  \frac{ \sqrt{  2^{p-1}  }  }  {  b_{2^{p-1}} } \Big\}}\right)\right]\le 1,$$
which follows from {\bf (N1)}.

From Proposition \ref{lemma:lyapounovexpo}, we have for all $b>0$
\[
\limsup_{n\rightarrow \infty} \frac{1}{b_{|\ttt_{n-1}|}^{2}} \log\pp\Big(A_{n,1}^{(r)}(b)\Big) = -\infty,
\]
so that taking $\lambda = \delta b_{|\ttt_{n-1}|} / (b\phi\sqrt{|\ttt_{n-1}|})$ in (\ref{surmarZ}), we are led to
\[
\limsup\limits_{n\rightarrow \infty} \frac{1}{b_{|\ttt_{n-1}|}^{2}}
\log\pp\left(
\frac{L_{n,1}^{(r)}}{b_{|\ttt_{n-1}|}\sqrt{|\ttt_{n-1}|}}
> \delta\right) \leq - \frac{\delta^{2}}{2b\phi}.
\]
Letting $b\rightarrow 0$, we obtain that the right hand of the last inequality goes to $-\infty$.
Proceeding in the same way for $-L_{n,1}^{(r)},$ we deduce that for all $r>0$
\[
\frac{L_{n,1}^{(r)}}{b_{|\ttt_{n-1}|}\sqrt{|\ttt_{n-1}|}}\superexp 0.
\]
Now,  it is easy to check that the same proof works for the others
components of $L_{n}^{(r)}.$ We thus conclude the proof of (\ref{negL}).

Eventually, let us treat the term $(F_{n}^{(r,R)}).$ We follow the same
approach as in the proof of (\ref{negL}). Let its first component
\[
F_{n,1}^{(r,R)} = \sum\limits_{k\in \ttt_{n-1,p-1}} (\vep_{2k}-\vep_{2k}^{(R)}) X_{k,n}^{(r)}
\]

For $\lambda\in \rr$, we consider the random sequence
$\left(W_{n,1}^{(r,R)}\right)_{n\geq {p-1}}$ defined by
\[
W_{n,1}^{(r,R)}=\exp\left(\lambda\sum_{k\in\ttt_{n-1,p-1}}(\vep_{2k}-\vep^{(R)}_{2k})X_{k,n}^{(r)}-
\frac{\lambda^2\kappa_R}{2}\sum\limits_{k\in\ttt_{n-1,p-1}}(X_{k,n}^{(r)})^{2}\right)
\]
where $\kappa_R$ appears in {\bf (N1R)}.

Let $b>0.$ Consider the following event $B_{n,1}^{(r)}(b) = \left\{\frac{1}{|\ttt_{n-1}|}\sum\limits_{k\in
\ttt_{n-1,p-1}}(X_{k,n}^{(r)})^{2}>b\right\}.$

We have for all $\delta>0,$
\begin{eqnarray}\label{surmarW}
&\,&\pp\left(\frac{F_{n,1}^{(r,R)}}{b_{|\ttt_{n-1}|}\sqrt{|\ttt_{n-1}|}}
> \delta\right)\notag\\
&\leq& \pp\left(B_{n,1}^{(r)}(b)\right) + \pp\left(W_{n,1}^{(r, R)} > \exp\left(\delta
\lambda b_{|\ttt_{n-1}|} \sqrt{|\ttt_{n-1}|} -\frac{\lambda^2\kappa_R}{2}|\ttt_{n-1}|b\right)\right)\notag\\
&\leq& \pp\left(B_{n,1}^{(r)}(b)\right) + \exp\left(-b_{|\ttt_{n-1}|} \sqrt{|\ttt_{n-1}|} \left(\delta
\lambda -\frac{b\kappa_R\sqrt{|\ttt_{n-1}|} }{2b_{|\ttt_{n-1}|}}\lambda^2\right)\right)
\end{eqnarray}
where the second term in (\ref{surmarW}) is obtained by conditioning
successively on $(\GG_i)_{2^{p-1}\le i\le |\ttt_{n-1}|-1}$ and using
the fact that
$$\ee\left[ \exp\left (\lambda \left(\vep_{2^p}-\vep^{(R)}_{2^p}\right)X^{(r)}_{2^{p-1}}-\frac{\lambda^2\kappa_R}{2}\left(X^{(r)}_{2^{p-1}}\right)^2\right)\right]\le 1,$$

Since  $B_{n,1}^{(r)}(b) \subset\left\{\frac{1}{|\ttt_{n-1}|}\sum\limits_{k\in
\ttt_{n-1,p-1}}X_{k}^{2}>b\right\} ,$ from Proposition \ref{convexpocrochet2}, we deduce that for $b$ large enough
\[
\limsup\limits_{n\rightarrow\infty} \frac{1}{b_{|\ttt_{n-1}|}^{2}}
\log\pp\left( B^{(r)}_{n,1}(b) \right) = -\infty,
\]
so that choosing $\lambda = \delta b_{|\ttt_{n-1}|} / (\kappa_Rb\sqrt{|\ttt_{n-1}|}),$ we get for all $\delta >0$
\[
\limsup\limits_{n\rightarrow \infty}  \frac{1}{b_{|\ttt_{n-1}|}^{2}}\log\pp\left(
\frac{F_{n,1}^{(r,R)}}{b_{|\ttt_{n-1}|}\sqrt{|\ttt_{n-1}|}} >
\delta\right) \leq -\frac{\delta^{2}}{2\kappa_Rb}.
\]
Letting $R$ to infinity, we obtain that
\[
\limsup\limits_{R\rightarrow \infty} \limsup\limits_{n\rightarrow \infty}  \frac{1}{b_{|\ttt_{n-1}|}^{2}}\log\pp\left(
\frac{F_{n,1}^{(r,R)}}{b_{|\ttt_{n-1}|}\sqrt{|\ttt_{n-1}|}} >
\delta\right) = -\infty.
\]
Now it is easy to check that the same works for $-F_{n,1}^{(r,R)}$ and
for the others components of $F_{n}^{(r,R)}.$ We thus conclude (\ref{negF})
for all $r>0$.

\bigskip

{\bf Step 3.} By application of  Theorem 4.2.16 in \cite{DemZei98}, we find
that $(M_{n}/(b_{|\ttt_{n-1}|}\sqrt{|\ttt_{n-1}|}))$ satisfies an MDP on
$\dR^{2(p+1)}$ with speed $b_{|\ttt_{n-1}|}^2$ and rate function
\begin{equation*}
\widetilde{I}(x) = \sup_{\delta >0}\liminf_{R\rightarrow \infty}\inf_{z\in B_{x,\delta}}I_R(z),
\end{equation*}
where $I_R$ is given in \eqref{rateIR}  and $B_{x,\delta}$ denotes
the ball $\{z:|z-x|<\delta\}.$ The identification of the rate
function $\widetilde I= I_M$, where $I_M$ is given in \eqref{IM} is
done easily (see for example \cite{DjGu01}), which concludes the
proof of Theorem \ref{thm:mdp_M_n}.

\bigskip
{\bf Proof in the case 1.}

For the proof in the case 1, there are no
change in Step 1, and Step 3, instead of (\ref{Linder}),
(\ref{Linder2}), and {\bf(N1)}, we use Remark \ref{Markov} and {\bf
(G1)}. In Step 2, the negligibility in (\ref{negI1}), comes from the
MDP of the i.i.d. sequences $(\vep_{2k}-\vep^{(R)}_{2k})$ since it
verifies the condition, for $\lambda>0$ and all $R>0$
$$\ee(\exp(\lambda (\vep_{2k}-\vep^{(R)}_{2k} ))<\infty.$$
The negligibility of $(L_n^{(r)})$ works in the same way. For
$(F_n^{(r,R)})$ we will use the MDP for martingale,  see Proposition
\ref{mdpdj}. For $R$ large enough, we have
\begin{eqnarray*}
\dP\Bigg(\left\vert X_{k,n}^{(r)} \left(\vep_{2k} -\vep_{2k}^{(R)} \right) \right\vert
> b_{|\ttt_{n-1}|}\sqrt{|\ttt_{n-1}|} ~ \Big\vert \cF_{k-1}\Bigg) & \leq & \dP\left(\left\vert \vep_{2k} -
\vep_{2k}^{(R)} \right\vert > \frac{b_{|\ttt_{n-1}|}^{2}}{r} \right),\\
 & = & \dP\left( \left\vert \vep_{2} -
\vep_{2}^{(R)} \right\vert > \frac{b_{|\ttt_{n-1}|}^{2}}{r}\right) = 0.
\end{eqnarray*}
This implies that
\begin{equation*}
\limsup_{n\rightarrow \infty} \frac{1}{b_{|\ttt_{n-1}}^{2}|} \log \left( |\ttt_{n-1}|\,\,\,
\underset{k \geq 1} {\rm ess\,sup} ~
\dP\Bigg(\left\vert X_{k,n}^{(r)} \left(\vep_{2k} - \vep_{2k}^{(R)} \right) \right\vert
> b_{|\ttt_{n-1}|}\sqrt{|\ttt_{n-1}|} ~ \Big\vert \cF_{k-1}\Bigg) \right) = -\infty.
\end{equation*}
That is condition {\bf (D2)} in Proposition \ref{mdpdj}.

For all $\gamma > 0$ and all $\delta > 0$, we obtain from Remark \ref{Markov} , that
\begin{eqnarray*}
\limsup_{n\rightarrow \infty} \frac{1}{b_{|\ttt_{n-1}|}^{2}}
\log \dP\left( \frac{1}{|\ttt_{n-1}|} \sum_{k\in \ttt_{n-1,p-1}} \left( X_{k,n}^{(r)} \right)^{2} \mathrm{I}_{\left\{ \vert X_{k,n}^{(r)} \vert > \gamma \frac{\sqrt{|\ttt_{n-1}|}}{b_{|\ttt_{n-1}|}}\right\}} > \delta\right)  \\
\leq \limsup_{n\rightarrow \infty} \frac{1}{b_{|\ttt_{n-1}|}^{2}} \log \dP\left( \frac{1}{|\ttt_{n-1}|} \sum_{k\in \ttt_{n-1,p-1}} X_{k}^{2} {\bf 1}_{\left\{\vert X_{k} \vert > \gamma \frac{\sqrt{|\ttt_{n-1}|}}{b_{|\ttt_{n-1}|}}\right\}} > \delta\right) = -\infty.
\end{eqnarray*}
That is condition {\bf (D3)} in Proposition \ref{mdpdj}.
Finally, from Remark \ref{Markov} and in the same way as in (\ref{S_ntrunc}), it follows that
\begin{equation*}
\frac{\langle F^{(r,R)} \rangle_{n,1}}{|\ttt_{n-1}|} =  Q_R\frac{1}{|\ttt_{n-1}|} \sum_{k\in\ttt_{n-1,p-1} }( X_{k,n}^{(r)} )^2 \superexp Q_R  \ell
\end{equation*}
for some positive constant $\ell$, where
$Q_{R}=\ee\left[\left(\vep_{2} - \vep_{2}^{(R)}\right)^{2}\right].$
That is condition {\bf (D1)} in Proposition \ref{mdpdj}. Moreover,
it is clear that $Q_{R}$ converges to 0 as $R$ goes to infinity. In
light of foregoing, we infer from Proposition \ref{mdpdj}, that
$(F_{n,1}^{(r,R)}/(b_{|\ttt_{n-1}|}\sqrt{|\ttt_{n-1}}|))$ satisfies
an MDP on $\dR$ of speed $b_{|\ttt_{n-1}|}^{2}$ and rate
function $I_{R}(x) = x^{2}/(2 Q_{R} \ell).$
In particular, this implies that for all $\delta > 0$,
$$
\limsup\limits_{n\rightarrow\infty} \frac{1}{b_{|\ttt_{n-1}|}^{2}}
\log\dP\left( \frac{|F_{n,1}^{(r,R)}|}{b_{|\ttt_{n-1}|}\sqrt{|\ttt_{n-1}|}}>\delta\right) \leq
-\frac{\delta^{2}}{2Q_{R}\ell},
$$
and letting $R$ go to infinity clearly leads to the result.

\subsubsection{Proof of Theorem \ref{thm:mdp_theta_n}} The proof works in the case 1 and in the case 2. From (\ref{thetaM_n}), we have
$$\frac{\sqrt{|\ttt_{n-1}|}}{b_{|\ttt_{n-1}|}}(\hat\theta_n-\theta)=|\ttt_{n-1}|\Sigma^{-1}_{n-1}\frac{M_n}{b_{|\ttt_{n-1}|}|\ttt_{n-1}|}$$
From Proposition \ref{prop:convergence_crochet}, we obtain that
\begin{equation}\label{cro}\frac{\Sigma_n}{|\ttt_{n}|}=I_2\otimes \frac{S_n}{|\ttt_{n}|}\superexpn I_2\otimes L. \end{equation}
According to Lemma 4.1 of  \cite{Wor01c}, together with (\ref{cro}), we deduce that
\begin{equation}\label{cro1}|\ttt_{n-1}|\Sigma^{-1}_{n-1}\superexp I_2\otimes L^{-1}. \end{equation}

From Theorem \ref{thm:mdp_M_n}, (\ref{cro1}) and the contraction
principle \cite{DemZei98}, we deduce that the sequence
$\big(\sqrt{|\ttt_{n-1}|}(\hat\theta_n-\theta)/b_{|\ttt_{n-1}|}\big)_{n\ge
1}$ satisfies the MDP with rate function $I_{\theta}$ given by
(\ref{Itheta}).

\end{proof}


\subsection{Proof of Theorem \ref{mdp_sigma_n_rho_n}}$\,$
\medskip

For the proof of Theorem \ref{mdp_sigma_n_rho_n}, the  case 1 is an
easy consequence of the classical MDP for i.i.d.r.v. applied to the
sequence ($\vep_{2k}^{2} + \vep_{2k+1}^{2}$) , for the case 2,  we
will use Proposition \ref{mdpdj}, rather than Puhalskii's
Theorem \ref{mdppuh}.
\medskip

We will prove that the
sequence$\displaystyle\left(\sqrt{|\ttt_{n-1}|}(\sigma_{n}^{2} -
\sigma^{2})/b_{|\ttt_{n-1}|}\right)$ satisfies the MDP. For that,we
will prove that conditions {\bf (D1), (D2)} and  {\bf (D3)} of Proposition \ref{mdpdj}  are verified. Let us consider
the $\GG_{n}$-martingale $(N_{n})_{n\geq 2^{p-1}}$ given by
\[
N_{n} = \sum\limits_{k=2^{p-1}}^{n} \nu_{k}, \hspace{0.25cm}
\text{where $\nu_{k} = \vep_{2k}^{2} + \vep_{2k+1}^{2} -
2\sigma^{2}.$}
\]
It is easy to see that its predictable quadratic variation is given
by
\[
\langle N\rangle_{n} = \sum\limits_{k=2^{p-1}}^{n}
\ee\left[\nu_{k}^{2}/\GG_{k-1}\right] =
(n-2^{p-1}+1)(2\tau^{4}-4\sigma^{4}+2\nu^{2}),
\]
which immediately implies that
\begin{equation*}
\frac{\langle N\rangle_{n}}{n} \superexpp 2\tau^{4}-4\sigma^{4}+2\nu^{2},
\end{equation*}
ensuring condition  {\bf (D1)} in Proposition \ref{mdpdj}.

Next, for $B>0$ large enough, we have for $a>2$ (in {\bf (Ea)}), and some positive constant $c$
\begin{equation*}
\pp\left(\frac{1}{n}
\sum\limits_{k=2^{p-1}}^{n} |\nu_{k}|^{a} > B\right)\leq 3
\max_{\eta\in\{0,1\}}\left\{ \pp\left(\frac{1}{n} \sum\limits_{k=2^{p-1}}^{n}
|\vep_{2k+\eta}|^{2a} > \frac{B}{3c}\right)\right\}.
\end{equation*}

From hypothesis {\bf (Ea)} and since $B$ is large enough, we obtain,
for a suitable $t>0$ via the Chernoff inequality and several
successive conditioning on $(\GG_n)$ , for $\eta\in\{0,1\}$
\[
\pp\left(\frac{1}{n} \sum\limits_{k=2^{p-1}}^{n} |\vep_{2k+\eta}|^{2a} >
\frac{B}{3c}\right) \leq \exp\left(-tn\left(\frac{B}{3c} - \log
E\right)\right) \leq \exp\left(-tc'n\right),
\]
where $c$, $c'$ are a positive generic constant.
Therefore, for $B>0$ large enough, we deduce that
\[
\limsup_{n\rightarrow \infty}\frac{1}{n}\log\pp\left(\frac{1}{n}
\sum\limits_{k=2^{p-1}}^{n} |\nu_{k}|^{a} > B\right) < 0,
\]
and this implies (see e.g \cite{Wor01c}) exponential Lindeberg
condition, that is for all $r>0$

\begin{equation*}
\frac{1}{n}\sum\limits_{k=2^{p-1}}^{n}
\nu_{k}^{2} \mathbf{1}_{\left\{|\nu_{k}| > r
\frac{\sqrt{n}}{b_{n}}\right\}}     \superexpp 0.
\end{equation*}
That is condition {\bf(D3)}  in Proposition \ref{mdpdj}.

Now, for all $k\in \nn$ and a suitable $t>0$ we have
\begin{eqnarray*}
\pp\left(|\nu_{k}|>b_{n}\sqrt{n}/\GG_{k-1}\right) &\leq& \sum_{\eta=0}^1\pp\left(
|\vep_{2k+\eta}^{2} - \sigma^{2}| >\frac{b_{n}\sqrt{n}}{2}/\GG_{k-1}\right) \\
&\leq&
\exp\left(\frac{-tb_{n}\sqrt{n}}{2}\right)
\sum_{\eta=0}^1\ee\Big[\exp\left(t|\vep_{2k+\eta}^{2} -
\sigma^{2}|\right)/\GG_{k-1}\Big]
\\ &\leq& 2E'\exp\left(\frac{-tb_{n}\sqrt{n}}{2}\right),
\end{eqnarray*}
where from hypothesis {\bf (Na)}, $E'$ is finite and positive. We
are thus led to
\[
\frac{1}{b_{n}^{2}} \log\left(n\,\,\,\underset {k\in \nn^{*}}{\rm
ess\,sup} \pp\left(|\nu_{k}|> b_{n}\sqrt{n}/\GG_{k-1}\right)\right)
\leq \frac{\log(2E'n)}{b_{n}^{2}} - \frac{t\sqrt{n}}{b_{n}},
\]
and consequently, letting $n$ goes to infinity, we get the condition {\bf(D2)}  in Proposition \ref{mdpdj}.

Now, applying Proposition  \ref{mdpdj}, we conclude
that $(N_{n}/(b_{n}\sqrt{n}))_{n\ge 0}$
satisfies the MDP with speed $b_{n}^{2}$
and rate function
\[
I_{N}(x) = \frac{x^{2}}{4(\tau^{4} - 2\sigma^{4} + 2\nu^{2})}.
\]
Applying the foregoing to $|\ttt_{n-1}|$ and using contraction
principle (see e.g \cite{DemZei98}), we deduce that the sequence
\[
\frac{\sqrt{|\ttt_{n-1}|}}{b_{|\ttt_{n-1}|}}(\sigma_{n}^{2} -
\sigma^{2}) =
\frac{N_{|\ttt_{n-1}|}}{2b_{|\ttt_{n-1}|}\sqrt{|\ttt_{n-1}|}}
\]
satisfies a MDP with speed
$b_{|\ttt_{n-1}|}^{2}$ and rate function $I_{\sigma^2}$ given by (\ref{Isigma}).


\bigskip
We obtain as in the proof of  the first part, with a slight modification that the sequence
$(|\ttt_{n-1}| (\rho_{n} - \rho)/b_{|\ttt_{n-1}|})$ satisfies a MDP with speed
$b_{|\ttt_{n-1}|}^{2}$ and rate function $I_{\rho}$ given by (\ref{Irho}).

\subsection{Proof of Theorem \ref{thm:conv_expo_sigma_n_rho_n}} Here also the proof works for the two cases.

\medskip
Let us first deal with $\hat{\sigma}_{n}.$ We have
\begin{equation*}
\hat{\sigma}_{n}^{2} - \sigma^{2} = (\hat{\sigma}_{n}^{2} -
\sigma_{n}^{2}) + (\sigma_{n}^{2} - \sigma^{2}).
\end{equation*}
From (\ref{devineqeps2}) and (\ref{devineqeps3}), we easily deduce
that $\sigma_{n}^{2} \superexp \sigma^{2}$ in the case 1 and in the
case 2. Thus, it is enough to prove that $\hat{\sigma_{n}^{2}} -
\sigma_{n}^{2} \superexp 0.$ Let $\theta^{(0)} = (a_{0}, a_{1},
\cdots, a_{p})^{t},$ $\theta^{(1)} = (b_{0}, b_{1}, \cdots,
b_{p})^{t},$ $\hat{\theta}_{n}^{(0)} = (\hat{a}_{0,n},
\hat{a}_{1,n}, \cdots, \hat{a}_{p,n}),$ $\hat{\theta}_{n}^{(1)} =
(\hat{b}_{0,n}, \hat{b}_{1,n}, \cdots, \hat{b}_{p,n}).$

Let us introduce the following function $f$ defined for $x$ and $z$
in $\dR^{p+1}$ by
\[
f(x,z) = \left(x_{1} - z_{1} - \sum\limits_{i=2}^{p+1} z_{i}
x_{i}\right)^{2},
\]
where $x_{i}$ and $z_{i}$ denote respectively the $i$-th component of
$x$ and $z.$ One can observe that
\begin{align*}
\hat{\sigma}_{n}^{2} - \sigma_{n}^{2} &= \frac{1}{2|\ttt_{n-1}|}
\sum\limits_{k\in \ttt_{n-1,p-1}} \left\{f\left(\xx_{2k},
\hat{\theta}_{n}^{(0)}\right) - f\left(\xx_{2k},
\theta^{(0)}\right)\right\} \\ &+ \frac{1}{2|\ttt_{n-1}|}
\sum\limits_{k\in \ttt_{n-1,p-1}} \left\{f\left(\xx_{2k+1},
\hat{\theta}_{n}^{(1)}\right) - f\left(\xx_{2k+1},
\theta^{(1)}\right)\right\}.
\end{align*}
By the Taylor-Lagrange formula, $\forall x\in \dR^{p+1}$ and
$\forall z, z'\in\dR^{p+1}$, one can find $\lambda \in (0,1)$ such
that
\begin{equation*}
f\left(x,z'\right) - f\left(x,z\right) = \sum\limits_{j=1}^{p+1}
(z'_{j} - z_{j})\partial_{z_{j}} f\left(x, z + \lambda (z' -z)\right).
\end{equation*}
Let the function $g$ defined by
\begin{equation*}
g(x,z) = x_{1} - z_{1} - \sum\limits_{j=2}^{p+1} z_{j}x_{j}.
\end{equation*}
Observing that
\begin{equation*}
\begin{cases}
\frac{\partial f}{\partial z_{1}}(x,z) = -2g(x,z) \\
\frac{\partial f}{\partial z_{j}}(x,z) = -2 x_{j} g(x,z) \quad
\forall j\geq 2,
\end{cases}
\end{equation*}
we get easily that $\left|\frac{\partial f}{\partial
z_{j}}(x,z)\right| \leq 4(1 + \|z\|)(1 + \|x\|^{2})$ for all
$j\geq 1,$ and this implies
\begin{equation*}
\left|f(x,z') - f(x,z)\right| \leq c \|z'-z\| \left(1 + \|z\| +
\|z' - z\|\right) \left(1 + \|x\|^{2}\right),
\end{equation*}
for some positive constant $c.$ Now, applying the foregoing to
$f\left(\xx_{2k}, \hat{\theta}_{n}^{(0)}\right) - f\Big(\xx_{2k},
\theta^{(0)}\Big)$ and to $f\left(\xx_{2k+1},
\hat{\theta}_{n}^{(1)}\right) - f\Big(\xx_{2k+1},
\theta^{(1)}\Big),$ we deduce easily that
\begin{equation*}
|\hat{\sigma}_{n}^{2} - \sigma_{n}^{2}| \leq c\|\hat{\theta}_{n} -
\theta\|\left(1 + \|\theta\| + \|\hat{\theta}_{n} - \theta\|\right)
\frac{1}{|\ttt_{n-1}|} \sum\limits_{k\in \ttt_{n-1,p-1}} \left(1 +
\|\xx_{k}\|^{2}\right),
\end{equation*}
for some positive constant $c$. From the MDP of $\hat{\theta}_{n} - \theta,$ we infer that
\begin{equation}\label{conv_superexpo_theta}
\|\hat{\theta}_{n} - \theta\| \superexp 0.
\end{equation}
Form Proposition \ref{convexpocrochet2} we deduce that
\begin{equation}\label{conv_superexpo_trace_S}
\frac{1}{|\ttt_{n-1}|} \sum\limits_{k\in \ttt_{n-1,p-1}} \left(1 +
\|\xx_{k}\|^{2}\right) \superexp 1+ {\rm Tr}(\Lambda).
\end{equation}
We thus conclude via (\ref{conv_superexpo_theta}) and
(\ref{conv_superexpo_trace_S}) that
\begin{equation*}
\hat{\sigma_{n}^{2}} - \sigma_{n}^{2} \superexp 0.
\end{equation*}
This ends the proof for $\hat{\sigma}_{n}.$ The proof for
$\hat{\rho}_{n}$ is very similar and uses hypothesis {\bf (G2')} and
{\bf (N2')} to get inequalities similar to (\ref{devineqeps2}) and
(\ref{devineqeps3}).

\vspace{15pt}

{\bf Acknowledgments.}  The authors  thank Arnaud Guillin for all his advices and suggestions during the preparation of this work.
\vspace{5pt}

\end{document}